\renewcommand\hat{\widehat}
\def\supp{{\rm{\,supp\,}}}
\def\bint{{\ifinner\rlap{\bf\kern.30em--}
\int\else\rlap{\bf\kern.35em--}\int\fi}\ignorespaces}
\def\sbint{{\ifinner\rlap{\bf\kern.32em--}
\hspace{0.078cm}\int\else\rlap{\bf\kern.45em--}\int\fi}\ignorespaces}
\newtheorem{theorem}{Theorem}[section]
\newtheorem{lemma}[theorem]{Lemma}
\newtheorem{corollary}[theorem]{Corollary}
\newtheorem{proposition}[theorem]{Proposition}
\theoremstyle{definition}
\newtheorem{remark}[theorem]{Remark}
\newtheorem{definition}[theorem]{Definition}
\numberwithin{equation}{section}
\numberwithin{equation}{section}
\begin{document}

\title{\Large\bf Campanato spaces via quantum Markov semigroups on finite von Neumann algebras \footnotetext{\hspace{-0.35cm} {\it 2020 Mathematics Subject Classification}.
{Primary  46L52; Secondary 30H99, 47D07.}
\endgraf{\it Key words and phrases.}  Campanato spaces, Lipschitz spaces, quantum Markov semigroups, von Neumann algebras.
}}

\author{Guixiang Hong, Yuanyuan Jing}
\date{  }

\maketitle
\vspace{-0.8cm}

\begin{center}
\begin{minipage}{13cm}\small
{\noindent{\bf Abstract} \
We study the Campanato spaces associated with quantum Markov semigroups on a finite von Neumann algebra $\mathcal M$.
 Let $\mathcal T=(T_{t})_{t\geq0}$ be a Markov semigroup, $\mathcal P=(P_{t})_{t\geq0}$ the subordinated Poisson semigroup and
 $\alpha>0$. The column Campanato space ${\mathcal{L}^{c}_{\alpha}(\mathcal{P})}$ associated to $\mathcal P$ is defined to be the subset of $\mathcal M$ with finite norm which is given by
\begin{align*} \|f\|_{\mathcal{L}^{c}_{\alpha}(\mathcal{P})}=\left\|f\right\|_{\infty}+\sup_{t>0}\frac{1}{t^{\alpha}}\left\|P_{t}|(I-P_{t})^{[\alpha]+1}f|^{2}\right\|^{\frac{1}{2}}_{\infty}.
\end{align*}
The row space ${\mathcal{L}^{r}_{\alpha}(\mathcal{P})}$ is defined in a canonical way. In this article, we will first show the surprising coincidence of these two spaces ${\mathcal{L}^{c}_{\alpha}(\mathcal{P})}$ and ${\mathcal{L}^{r}_{\alpha}(\mathcal{P})}$ for $0<\alpha<2$. This equivalence of column and row norms is generally unexpected in the noncommutative setting. The approach is to identify both of them as the Lipschitz space ${\Lambda_{\alpha}(\mathcal{P})}$.
This coincidence passes to the little Campanato spaces $\ell^{c}_{\alpha}(\mathcal{P})$ and $\ell^{r}_{\alpha}(\mathcal{P})$ for $0<\alpha<\frac{1}{2}$ under the condition $\Gamma^{2}\geq0$.
 We also show that any element in ${\mathcal{L}^{c}_{\alpha}(\mathcal{P})}$ enjoys the higher order cancellation property, that is, the index $[\alpha]+1$ in the definition of the Campanato norm can be replaced by any integer greater than $\alpha$. It is a surprise that this property holds without further condition on the semigroup. Lastly, following Mei's work on BMO, we also introduce the spaces ${\mathcal{L}^{c}_{\alpha}(\mathcal{T})}$ and explore their connection with ${\mathcal{L}^{c}_{\alpha}(\mathcal{P})}$. All the above-mentioned results seem new even in the (semi-)commutative case.}
\end{minipage}
\end{center}

\section{Introduction}
\hskip\parindent
The Campanato spaces, introduced by Campanato \cite{c63}, play an important role in the study of classic analysis and have been studied extensively. We refer the interested reader to e.g. \cite{c64,m17,p69,rss13,tw80} for developments and applications.  Let us first recall the definition of the Campanato spaces in $\mathbb {R}^{n}$. Let $\alpha\geq0$ and $[\alpha]$ be the integer part of $\alpha$, the classical Campanato space is defined as a subspace of $L_{\mathrm{loc}}^{2}(\mathbb{R}^{n})$ with finite norm which is given by
\begin{align}\label{02}
\|f\|_{\mathcal{L}_{\alpha}(\mathbb{R}^{n})}=\sup_{B}\frac{1}{|B|^{\frac{\alpha}{n}}}\left(\frac{1}{|B|}\int_{B}|f(x)-P_{B}f(x)|^{2}dx\right)^{\frac{1}{2}},
\end{align}
where the supremum is taken over all balls $B$ in $\mathbb{R}^{n}$ and $P_{B}f$ is the polynomial of degree at most $[\alpha]$.
As noted in \cite{g85}, for $\alpha=0$, this coincides with the classical BMO space introduced by John and Nirenberg in \cite{jn61}; and for $\alpha>0$, these spaces are variants of the homogeneous Lipschitz spaces $\Lambda_{\alpha}(\mathbb{R}^{n})$ which are duals of certain Hardy spaces, see also \cite{jtw83,tw80,l08}.

In the case of $\alpha=0$, the BMO spaces defined purely via semigroups were introduced by Mei \cite{m08,m12} with the main aim to develop BMO theory in the noncommutative setting where the metric or geometric structure is not available. We refer the reader also to e.g. \cite{sv74,v80,xy05,XY05} for BMO theory that in turn partly motivated Mei's work. In these two papers, Mei studied the Fefferman-Stein dualities. Later on, in \cite{mj12} the authors introduced various BMO spaces associated to a semigroup and investigated their connection and interpolation. See also \cite{tms19} for more results on functional calculus on the semigroup BMO spaces.


In the present paper we are interested in the noncommutative theory of Campanato spaces in the remaining cases $\alpha>0$.

Let $\mathcal{M}$ be a von Neumann algebra with a normal faithful  finite  trace $\tau$. Let $\mathcal T=(T_{t})_{t\geq0}$ be a quantum semigroup acting on $\mathcal{M}$, $\mathcal P=(P_{t})_{t\geq0}$ its subordinated Poisson semigroup and $\alpha>0$. Motivated by the BMO space introduced in \cite{m08}, the column Campanato space is defined as
 \begin{align*}
   \mathcal{L}_{\alpha}^{c}(\mathcal P)=\left\{f\in\mathcal{M}: \,\, \left\|f\right\|_{\mathcal{L}_{\alpha}^{c}(\mathcal P)}<\infty\right\},
 \end{align*}
where
 \begin{align}\label{p1}
   \|f\|_{\mathcal{L}^{c}_{\alpha}(\mathcal{P})}= \|f\|_{\infty}+\sup_{t>0}\frac{1}{t^{\alpha}}\left\|P_{t}|(I-P_{t})^{[\alpha]+1}f|^{2}\right\|^{\frac{1}{2}}_{\infty}.
 \end{align}
Due to the noncommutativity, one defines also the row norm  $\left\|f\right\|_{\mathcal L_{\alpha}^{r}(\mathcal P)}=\left\|f^{*}\right\|_{\mathcal L_{\alpha}^{c}(\mathcal P)}$ and similarly the row space $ \mathcal{L}_{\alpha}^{r}(\mathcal P)$. Then the mixture space $  \mathcal{L}_{\alpha}^{cr}(\mathcal P)$ is defined as $\mathcal{L}_{\alpha}^{c}(\mathcal P)\cap \mathcal{L}_{\alpha}^{r}(\mathcal P)$ equipped with the intersection norm.
For $0<\alpha<1$, one may also define the little column Campanato norm: for $f\in\mathcal M$,
 \begin{align}\label{p3}
    \|f\|_{\ell^{c}_{\alpha}(\mathcal{P})}= \|f\|_{\infty}+\sup_{t>0}\frac{1}{t^{\alpha}}\left\|P_{t}|f|^{2}-|P_{t}f|^{2}\right\|^{\frac{1}{2}}_{\infty},
 \end{align}
and $\|f\|_{\ell_{\alpha}^{cr}(\mathcal{P})}=\max\left\{ \|f\|_{\ell^{c}_{\alpha}(\mathcal{P})},\|f\|_{\ell^{r}_{\alpha}(\mathcal{P})}\right\}$, where the row norm is given by $\|f\|_{\ell^{r}_{\alpha}(\mathcal{P})}=\|f^{*}\|_{\ell^{c}_{\alpha}(\mathcal{P})}$. The resulting spaces ${\ell^{c}_{\alpha}(\mathcal{P})}$, ${\ell^{r}_{\alpha}(\mathcal{P})}$ and ${\ell^{cr}_{\alpha}(\mathcal{P})}$ are defined in the standard way.

To the best of our knowledge, the above definitions of the Campanato spaces involving only semigroups are new even in the commutative setting while there have appeared some definitions via semigroups combined with geometric objects like balls in the literature such as \cite{ddy05, dxy07, dy09}. When $(P_{t})_{t\geq0}$ is the classical Poisson semigroup $(e^{-t\sqrt{-\Delta}})_{t\geq0}$ on $\mathbb R^n$, like in the case $\alpha=0$ (cf. \cite{XY05}), one may check that both \eqref{p1} and \eqref{p3} are equivalent to the classical inhomogeneous Campanato norm with the homogeneous part given by \eqref{02} for $0<\alpha<1/2$, see also \cite{ddy05, dy09}. For large $\alpha$'s, we believe that they are still equivalent but it needs more efforts. And one of our main results---Theorem \ref{e25} below, showing for $0<\alpha<2$ the coincidence of  \eqref{p1} and the classical inhomogeneous Lipschitz norm \eqref{l1} which is in turn well-known to be a variant of the classical inhomogeneous Campanato norm (cf. \cite{jtw83}), will provide positive evidence to this conjecture.


A semigroup $(T_{t})_{t\geq0}$ is called a Markov semigroup if it is unital, completely positive, trace-preserving and symmetric, which will be recalled in detail in Section 2.2.

\begin{theorem}\label{b3}
Let $(T_{t})_{t\geq0}$ be a quantum Markov semigroup, and $(P_{t})_{t\geq0}$ the Poisson semigroup subordinated to $(T_{t})_{t\geq0}$.
\begin{itemize}
  \item [\rm{(i)}]For any $0<\alpha<2$, we have the following isomorphisms with equivalent norms
\begin{align*}
 \mathcal L_{\alpha}^{c}(\mathcal P)=\mathcal L_{\alpha}^{r}(\mathcal P)=\mathcal L_{\alpha}^{cr}(\mathcal P).
\end{align*}
  \item [\rm{(ii)}] For $0<\alpha<\frac{1}{2}$, under the condition $\Gamma^{2}\geq0$ we have $$\ell^{c}_{\alpha}(\mathcal{P})=\ell^{r}_{\alpha}(\mathcal{P})=\ell_{\alpha}^{cr}(\mathcal{P})$$
  with equivalent norms. Here the notation $\Gamma^2$ means the Bakry-\'Emery's iterated gradient form that will be recalled in Section 6.
\end{itemize}
\end{theorem}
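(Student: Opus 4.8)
The plan is to prove both coincidences by factoring through an intermediate Lipschitz-type space $\Lambda_\alpha(\mathcal P)$ which is manifestly selfadjoint (invariant under $f\mapsto f^*$), so that the column and row descriptions automatically agree once each is shown equivalent to it. For part (i), I would define $\|f\|_{\Lambda_\alpha(\mathcal P)}=\|f\|_\infty+\sup_{t>0}t^{-\alpha}\|(t\partial_t)^{[\alpha]+1}P_t f\|_\infty$ (or the equivalent formulation with $(I-P_t)^{[\alpha]+1}$ replaced by a derivative-based expression), and then establish the two-sided estimate $\|f\|_{\mathcal L^c_\alpha(\mathcal P)}\approx\|f\|_{\Lambda_\alpha(\mathcal P)}$. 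One direction, $\|f\|_{\Lambda_\alpha(\mathcal P)}\lesssim\|f\|_{\mathcal L^c_\alpha(\mathcal P)}$, should follow from the operator-valued Cauchy–Schwarz inequality $|P_t g|^2\le P_t|g|^2$ applied to $g=(I-P_t)^{[\alpha]+1}f$ together with semigroup smoothing to pass from $I-P_t$ differences to $t\partial_t$ derivatives; crucially, since $\Lambda_\alpha$ is selfadjoint, this same bound applied to $f^*$ gives $\|f\|_{\Lambda_\alpha(\mathcal P)}\lesssim\|f^*\|_{\mathcal L^c_\alpha(\mathcal P)}=\|f\|_{\mathcal L^r_\alpha(\mathcal P)}$ as well. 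The reverse direction, $\|f\|_{\mathcal L^c_\alpha(\mathcal P)}\lesssim\|f\|_{\Lambda_\alpha(\mathcal P)}$, is the substantive analytic step: one must control $\|P_t|(I-P_t)^{[\alpha]+1}f|^2\|_\infty$ by a supremum of $\|\cdot\|_\infty$-norms of derivatives of $P_s f$, which I would do by writing $|(I-P_t)^{[\alpha]+1}f|^2=((I-P_t)^{[\alpha]+1}f)^*\,((I-P_t)^{[\alpha]+1}f)$, expressing each factor as an integral of $\partial_s P_s f$ over $s\in[0,t]$ (iterated $[\alpha]+1$ times), and then using Fubini together with the pointwise Cauchy–Schwarz/Hölder inequality in $\mathcal M$ and the contractivity of $P_t$ to reduce to bounding $\int_0^t\!\int_0^t \|P_{s+s'}\,(\text{derivative terms})\|\,\frac{ds}{s}\frac{ds'}{s'}$, absorbing the kernel against $t^\alpha$ by the growth hypothesis $\|(s\partial_s)^{[\alpha]+1}P_s f\|_\infty\lesssim s^\alpha\|f\|_{\Lambda_\alpha}$. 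The restriction $\alpha<2$ enters because one needs enough derivatives ($[\alpha]+1\ge 2$ suffices, $[\alpha]+1\le 3$) for these integrals to converge at both endpoints without additional cancellation hypotheses, and the square structure $|\,\cdot\,|^2$ limits how much one can iterate the pointwise Cauchy–Schwarz step cleanly.

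For part (ii), the little Campanato norm $\|f\|_{\ell^c_\alpha(\mathcal P)}$ involves the ``carré du champ''-type quantity $P_t|f|^2-|P_tf|^2\ge0$ rather than $P_t|(I-P_t)f|^2$, so the argument is different and here the hypothesis $\Gamma^2\ge0$ is essential. The key identity is $P_t|f|^2-|P_tf|^2=2\int_0^t P_{t-s}\,\Gamma(P_sf,P_sf)\,ds$, where $\Gamma(g,g)=\tfrac12(A(g^*g)-g^*Ag-(Ag)^*g)\ge0$ is the gradient form associated to the generator $A$ of $\mathcal T$ (and hence of $\mathcal P$, after the subordination bookkeeping). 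I would first reduce the $\ell^c_\alpha$-norm, via this identity and the positivity of $\Gamma$ and of $P_s$, to controlling $\sup_{0<s<t}\|\Gamma(P_sf,P_sf)\|_\infty$, and then invoke $\Gamma^2\ge0$ — which by the standard Bakry–Émery computation yields a differential inequality forcing $\|\Gamma(P_sf,P_sf)\|_\infty$ to be comparable to $\|\,|\partial_s P_s f|\,\|_\infty^2$ type quantities, i.e. to the first-order Lipschitz data — to match it against $\|f\|_{\Lambda_\alpha(\mathcal P)}^2$ for $\alpha<1/2$. Since $\Gamma(g,g)$ is symmetric under $g\mapsto g^*$ only modulo the generator's symmetry, the point is that $\Gamma^2\ge0$ makes the column and row gradient quantities simultaneously controllable, so that once $\ell^c_\alpha(\mathcal P)\hookrightarrow\Lambda_\alpha(\mathcal P)$ (using $\Gamma^2\ge0$) and conversely $\Lambda_\alpha(\mathcal P)\hookrightarrow\ell^c_\alpha(\mathcal P)$ (a more direct estimate, writing the difference of squares as the $\Gamma$-integral and bounding $\Gamma(P_sf,P_sf)$ above by derivatives of $P_sf$), the same chain run with $f^*$ gives the row version and the three spaces coincide. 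The range $\alpha<1/2$ appears because $\ell^c_\alpha$ only sees first-order ($[\alpha]+1=1$) behaviour, so the comparison with $\Lambda_\alpha$ — which for the little space must also be truncated at first order — is only valid below the first-order Lipschitz threshold, and in the semigroup formulation the square-root in the Poisson subordination halves the effective smoothness, landing at $1/2$.

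The main obstacle, I expect, is the reverse inclusion in part (i): obtaining $\|P_t|(I-P_t)^{[\alpha]+1}f|^2\|_\infty\lesssim t^{2\alpha}\|f\|_{\Lambda_\alpha(\mathcal P)}^2$ requires handling a genuinely bilinear (operator-valued) expression, and one cannot simply move $P_t$ inside or use scalar integration — every manipulation must respect the order of the noncommuting factors and use only $\mathcal M$-valued Cauchy–Schwarz, the complete positivity of $P_s$, and the operator Jensen inequality $|P_s g|^2\le P_s|g|^2$. Getting the iterated-integral kernel estimates to close with exactly $[\alpha]+1$ derivatives (neither too few for convergence nor requiring cancellations the hypotheses don't provide) is the delicate bookkeeping; the subordination formula relating $\partial_t P_t$ to the heat semigroup $T_t$ adds another layer that must be threaded through all the estimates. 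Once part (i) is in place, the ``higher order cancellation'' remark in the abstract — that $[\alpha]+1$ may be replaced by any integer $>\alpha$ — should follow from the same $\Lambda_\alpha$-characterization, since that characterization makes no reference to the specific integer, and I would record this as a corollary.
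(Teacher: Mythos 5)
Your global architecture is the paper's: both coincidences are obtained by identifying the column and row spaces with a selfadjoint intermediate space, namely $\Lambda_{\alpha}(\mathcal P)$. But in part (i) you have the difficulty reversed, and the direction you wave through is precisely the one that was left open in the literature (Gonz\'alez-P\'erez proved the other one). The estimate $\|f\|_{\mathcal L_{\alpha}^{c}(\mathcal P)}\lesssim\|f\|_{\Lambda_{\alpha}(\mathcal P)}$ is the easy direction and holds for \emph{all} $\alpha>0$: since $P_t$ is a positive contraction, $\|P_t|(I-P_t)^{[\alpha]+1}f|^2\|_{\infty}^{1/2}\leq\|(I-P_t)^{[\alpha]+1}f\|_{\infty}$, and the difference is an iterated integral of $\partial_s^{[\alpha]+1}P_sf$ (for $\alpha\geq1$ one first factors $(I-P_t)^{M}=(I-P_{t/2})^{M}(I+\sum_N C_M^N P_{Nt/2})$ and absorbs a $\sqrt2\,2^{-\alpha}$ term). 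The genuinely hard direction is $\|f\|_{\Lambda_{\alpha}(\mathcal P)}\lesssim\|f\|_{\mathcal L_{\alpha}^{c}(\mathcal P)}$, and it is there --- not where you placed it --- that the restriction $\alpha<2$ enters. ``Kadison--Schwarz plus semigroup smoothing'' cannot close it: $\partial_t^{[\alpha]+1}P_tf$ sees all of $f$ while $(I-P_t)^{[\alpha]+1}f$ sees only its high-frequency part. One needs (a) the mean ergodic splitting $\mathcal M=\mathcal M^{\circ}\oplus\ker(A_{\infty}^{1/2})$ to reduce to $P_tf\to0$; (b) the telescoping decomposition $f=(f-P_tf)+\sum_{m\geq0}(P_{2^mt}-P_{2^{m+1}t})f$ for $0<\alpha<1$, resp. $(I-P_t)f=(I-P_t)^2f+\sum_{n\geq1}P_{nt}(I-P_t)^2f$ for $1\leq\alpha<2$; and (c) the pointwise gradient estimate $|\partial_t^{j}P_tf|^2\lesssim_j t^{-2j}P_t|f|^2$ applied at each scale, the value $\alpha<2$ being exactly what makes the resulting series $\sum_m2^{m(\alpha-1)}$, resp. $\sum_n(n+1)^{-3/2}$, converge. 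None of these ingredients appears in your sketch.

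For part (ii) the identity $P_t|f|^2-|P_tf|^2=2\int_0^tP_{t-s}\Gamma_{A^{1/2}}(P_sf)\,ds$ is the right starting point, but your proposed use of $\Gamma^2\geq0$ --- that it forces $\|\Gamma(P_sf)\|_{\infty}$ to be comparable to $\|\partial_sP_sf\|_{\infty}^2$ --- is not true in general: the carr\'e du champ contains the full spatial gradient, which is not controlled by time derivatives of the Poisson extension alone (that is a Riesz-transform statement and does not come for free). What $\Gamma^2\geq0$ actually supplies, after subordination, is the monotonicity $P_r\Gamma_{A^{1/2}}(P_{s+t}f)\leq P_{r+t}\Gamma_{A^{1/2}}(P_sf)$, which is used to dominate $\int_0^tP_{t-s}\Gamma_{A^{1/2}}(P_{t+s}f)\,ds$ by $\tfrac12(P_{2t}|f|^2-|P_{2t}f|^2)$ and then absorb it; the absorption requires $2^{\alpha-1/2}<1$, which is where $\alpha<\tfrac12$ comes from. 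Moreover the comparison of $\ell_{\alpha}^{c}(\mathcal P)$ with $\mathcal L_{\alpha}^{c}(\mathcal P)$ produces an extra term $\sup_{t>0}t^{-\alpha}\|P_tf-P_{2t}f\|_{\infty}$ that must be controlled separately through the Carleson-type quantity $\sup_{t>0}t^{-\alpha}\|P_t\int_0^t|\partial_sP_sf|^2s\,ds\|_{\infty}^{1/2}$; this step is absent from your outline. So the skeleton is correct, but both hard estimates are missing their actual mechanisms.
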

Note that due to the noncommutativity, it is well-known that the column space is usually not isomorphic to the row space such as in the cases of Hardy spaces or BMO spaces \cite{Mei07}, which is also one of the key sources of difficulty in noncommutative analysis. Thus Theorem \ref{b3} presents a new phenomenon. Moreover, Theorem \ref{b3}(i) implies that
the space $\mathcal L_{\alpha}^{c}(\mathcal P)$ is completely isomorphic to $\mathcal L_{\alpha}^{r}(\mathcal P)$ when the operator space structure is provided by the following identification
$$\|f\|_{M_{n}( \mathcal L_{\alpha}^{c}(\mathcal P))}:=\|f\|_{\mathcal L_{\alpha}^{c}((P_t\otimes I_n)_{t\geq0})},\quad\|f\|_{M_{n}( \mathcal L_{\alpha}^{r}(\mathcal P))}:=\|f\|_{( \mathcal L_{\alpha}^{r}((P_t\otimes I_n)_{t\geq0})}$$
where $I_n$ is the identity map on $M_n$---the algebra of $n\times n$ complex matrices.

Theorem \ref{b3}(ii) will be a consequence of Theorem \ref{b3}(i) once one can identify the Campanato spaces with the little ones. This identification will involve $\Gamma^2\geq0$, which is motivated by Junge and Mei's work \cite{mj12}. See Section 6 for details. The difficult part lies on Theorem \ref{b3}(i). We will establish this isomorphism by identifying both the column and row Campanato spaces with the Lipschitz spaces associated with quantum Markov semigroups.

For a given quantum Markov semigroup $(T_t)_{t\geq0}$ and its subordinated Poisson semigroup $(P_t)_{t\geq0}$ and $\alpha>0$, the Lipschitz space ${\Lambda_{\alpha}(\mathcal{P})}$ is the subset of $\mathcal M$ with finite norm given by
\begin{align}\label{l1}
 \|f\|_{\Lambda_{\alpha}(\mathcal{P})}=\|f\|_{\infty}+\sup_{t>0}\frac{1}{t^{\alpha-([\alpha]+1)}}\left\|\frac{\partial^{[\alpha]+1}P_{t}f}{\partial t^{[\alpha]+1}}\right\|_{\infty}.
\end{align}
The above definition can date back to Stein \cite{s70} in 1970 for the Poisson semigroup on $\mathbb R^n$. There seems very little work in the literature for other semigroups or the quantum ones, and most of the Lipschitz spaces existing in the literature are defined through differences, see e.g. \cite{w96,w98,xxy18} for the ones on quantum tori.
\begin{theorem}\label{e25}
Suppose $f\in \mathcal{M}$. Then for $\alpha>0$, we have
\begin{align}\label{b2}
 \left\|f\right\|_{\mathcal L_{\alpha}^{c}(\mathcal P)}\lesssim_{\alpha}\left\|f\right\|_{\Lambda_{\alpha}(\mathcal P)}.
\end{align}
Furthermore, if $0<\alpha<2$, we have
\begin{align}\label{192}
 \left\|f\right\|_{\Lambda_{\alpha}(\mathcal P)}\lesssim_{\alpha}\left\|f\right\|_{\mathcal L_{\alpha}^{c}(\mathcal P)}.
\end{align}
Thus, $\mathcal L_{\alpha}^{c}(\mathcal P)=\Lambda_{\alpha}({\mathcal P})$ with equivalent norms when $0<\alpha<2$. Similar statements hold for the row Campanato spaces.
\end{theorem}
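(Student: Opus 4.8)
The plan is to prove the two inequalities \eqref{b2} and \eqref{192} separately, treating the column case; the row case follows by applying the column case to $f^*$ and using that $(P_t)$ is symmetric (hence $\|f\|_{\Lambda_\alpha(\mathcal P)}=\|f^*\|_{\Lambda_\alpha(\mathcal P)}$). Write $k=[\alpha]+1$ throughout. For \eqref{b2}, the key is to rewrite the polynomial $(I-P_t)^k f$ of the operators $P_s$ in terms of the derivatives appearing in \eqref{l1}. One expands $(I-P_t)^k=\sum_{j=0}^k\binom{k}{j}(-1)^j P_{jt}$ via the semigroup property, and observes that this combination annihilates polynomials of low degree; more precisely, using the fundamental theorem of calculus $k$ times one can represent $(I-P_t)^k f$ as a $k$-fold iterated integral of $\partial_s^k P_s f$ over a simplex of side length comparable to $t$, i.e. $(I-P_t)^k f = (-1)^k\int_{[0,t]^k}\partial_s^k P_s f\big|_{s=s_1+\dots+s_k}\,ds_1\cdots ds_k$ (up to getting the exact kernel right). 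Then $\|(I-P_t)^k f\|_\infty \lesssim t^k \sup_{s\le kt}\|\partial_s^k P_s f\|_\infty \lesssim t^k \cdot t^{\alpha-k}\|f\|_{\Lambda_\alpha(\mathcal P)} = t^\alpha \|f\|_{\Lambda_\alpha(\mathcal P)}$, using that $s\mapsto s^{k-\alpha}\|\partial_s^k P_s f\|_\infty$ is controlled by the Lipschitz norm together with a standard bound $\|\partial_s^k P_s f\|_\infty \lesssim s^{-k}\|f\|_\infty$ for small $s$ to handle the region $s$ near $0$ if needed. Finally, complete positivity and the operator Cauchy–Schwarz inequality $P_t|g|^2 \ge |P_t g|^2$ go the wrong way, so instead one uses the Kadison–Schwarz inequality together with $\|P_t|g|^2\|_\infty \le \|\,|g|^2\|_\infty = \|g\|_\infty^2$ (as $P_t$ is unital and positive, hence a contraction on $\mathcal M$) with $g=(I-P_t)^k f$ to get $\frac{1}{t^\alpha}\|P_t|(I-P_t)^k f|^2\|_\infty^{1/2} \le \frac{1}{t^\alpha}\|(I-P_t)^k f\|_\infty \lesssim \|f\|_{\Lambda_\alpha(\mathcal P)}$.

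For the converse \eqref{192}, valid only for $0<\alpha<2$ (so $k=1$ when $\alpha<1$ and $k=2$ when $1\le\alpha<2$), the task is to bound $\|\partial_t^k P_t f\|_\infty$ by $t^{\alpha-k}\|f\|_{\mathcal L_\alpha^c(\mathcal P)}$. The idea is to exploit the semigroup identity $\partial_t^k P_t f = \partial_t^k P_{t/2}(P_{t/2}f)$ and to express $\partial_t^k P_{t}$ applied at time $t$ in terms of $(I-P_{ct})^k$ acting at a comparable scale, reversing the simplex representation from the first part — concretely, using spectral calculus for the generator $A$ (where $P_t=e^{-tA^{1/2}}$ in the subordinated picture) one writes $\partial_t^k P_t = \phi_t(A)$ and $(I-P_{t})^k=\psi_t(A)$ and checks $\phi_t$ factors through $\psi_{ct}$ times a uniformly bounded multiplier, so that $\|\partial_t^k P_t f\|_\infty \lesssim t^{-k}\|(I-P_{ct})^k(P_{c't}f)\|_\infty$ for suitable constants. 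The delicate point is that the Campanato norm only controls $\|P_s|(I-P_s)^k g|^2\|_\infty^{1/2}$, not $\|(I-P_s)^k g\|_\infty$; these differ, and recovering the sup-norm of an element from the sup-norm of $P_s$ applied to its modulus-square is exactly where one must work. Here one uses a duality/averaging trick: for any state $\rho$ on $\mathcal M$, $|\rho((I-P_s)^k g)|^2 \le \rho(|(I-P_s)^k g|^2)$, but to reintroduce the extra $P_s$ one composes with one more application of the semigroup, namely estimate $(I-P_t)^k(P_t f)$ and use $\|(I-P_t)^k P_t f\|_\infty = \sup_\rho |\rho(P_{t}(I-P_t)^k f)| = \sup_\rho |(P_t^*\rho)((I-P_t)^k f)|$, and $P_t^*\rho$ composed with the trace gives a density against which Cauchy–Schwarz yields $\le \sup_\rho \rho(P_t|(I-P_t)^k f|^2)^{1/2} \le \|P_t|(I-P_t)^k f|^2\|_\infty^{1/2} \le t^\alpha\|f\|_{\mathcal L_\alpha^c(\mathcal P)}$. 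Combining, $\|\partial_t^k P_t f\|_\infty \lesssim t^{-k}\cdot (ct)^\alpha \|f\|_{\mathcal L_\alpha^c(\mathcal P)} \lesssim t^{\alpha-k}\|f\|_{\mathcal L_\alpha^c(\mathcal P)}$, which is \eqref{192}.

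The main obstacle is the restriction $\alpha<2$, i.e. $k\le 2$, in the converse direction. The factorization $\partial_t^k P_t = \psi_{ct}(A)\cdot m_t(A)$ with a uniformly bounded multiplier $m_t$ must be verified; for $k=1,2$ this amounts to checking that functions like $\lambda^{1/2}e^{-t\lambda^{1/2}}/(1-e^{-ct\lambda^{1/2}})$ and its square are bounded Fourier/Laplace-type multipliers on $\mathcal M$ uniformly in $t$, which is elementary by writing them as $\int e^{-u\lambda^{1/2}}d\nu_t(u)$ with a positive measure $\nu_t$ of uniformly bounded total mass — this is where $k\le 2$ seems to be used, as for $k\ge 3$ the natural multiplier picks up sign changes and the argument breaks. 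So the proof proceeds: (1) establish the simplex/integral representation of $(I-P_t)^k f$ and the standard derivative bounds $\|\partial_s^k P_s f\|_\infty\lesssim \min(s^{-k},s^{\alpha-k}\|f\|_{\Lambda_\alpha})\cdot(\cdot)$; (2) deduce \eqref{b2} via Kadison–Schwarz and contractivity of $P_t$; (3) prove the reverse multiplier factorization for $k=1,2$; (4) run the duality argument converting the Campanato control of $P_t|(I-P_t)^k f|^2$ into an $L^\infty$ bound on $(I-P_t)^k(P_t f)$; (5) assemble \eqref{192}; (6) transfer everything to the row setting via $f\mapsto f^*$.
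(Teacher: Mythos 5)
The first half of your argument, for \eqref{b2}, follows the paper's route in outline: represent $(I-P_t)^{[\alpha]+1}f$ as an iterated integral of $\partial_v^{[\alpha]+1}P_vf$ over a cube of side $\sim t$, then pass to the Campanato quantity via $\|P_t|g|^2\|_\infty^{1/2}\le\|g\|_\infty$. But your displayed chain $\|(I-P_t)^kf\|_\infty\lesssim t^k\sup_{s\le kt}\|\partial_s^kP_sf\|_\infty$ is false as written: since $\alpha-k<0$ that supremum is infinite, and the fallback $\|\partial_s^kP_sf\|_\infty\lesssim s^{-k}\|f\|_\infty$ diverges just as badly as $s\to0$. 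What actually saves the estimate is that the variable $v=s_1+\cdots+s_k$ carries the simplex density $\sim v^{k-1}dv$, so the relevant integral behaves like $\int_0^{kt}v^{\alpha-k}v^{k-1}dv\sim t^{\alpha}$; this computation must be carried out (the paper does it in \eqref{f1}--\eqref{162}, after first using Lemma \ref{531} to peel off a term absorbable by $\frac{\sqrt2}{2^\alpha}<1$ and to keep the innermost integration variable away from $0$).

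The second half has a genuine gap at its central step (3). You propose to factor $\partial_t^kP_t=m_t(A)\,(I-P_{ct})^kP_{c't}$ with $m_t(A)$ bounded on $\mathcal M$ of norm $\lesssim t^{-k}$, justified by writing the symbol as $\int e^{-u\sqrt\lambda}\,d\nu_t(u)$ for a positive measure of uniformly bounded mass. This is not available: with $x=t\sqrt\lambda$ the symbol is $t^{-k}\,x^ke^{-\delta x}(1-e^{-cx})^{-k}$, and its inverse Laplace transform is not a finite measure --- $(1-e^{-cx})^{-k}=\sum_n\binom{n+k-1}{k-1}e^{-ncx}$ has infinite total mass, while $x^ke^{-\delta x}$ corresponds to a $k$-th derivative of a point mass. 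Since $A$ is an abstract generator, spectral calculus only gives $L^2$-bounds, so there is no route to the uniform $L^\infty$-bound you need. The paper's mechanism is different: (i) reduce to $f\in\mathcal M^\circ$ via the ergodic splitting of Lemma \ref{gf}; (ii) expand $f$ (resp. $(I-P_t)f$) as the telescoping series $(I-P_t)^kf+\sum_{n\ge1}P_{nt}(I-P_t)^kf$, i.e.\ accept $(1-e^{-x})^{-k}$ as an infinite positive series of translates rather than a bounded multiplier; (iii) control each term by the pointwise estimate $|\partial_s^kP_sg|^2\lesssim s^{-2k}P_s|g|^2$ of Proposition \ref{ws} evaluated at $s=(n+1)t$, which yields the decay $(n+1)^{-k}$, partially offset by $(n+1)^{1/2}$ from the quasi-monotonicity $P_{(n+1)t}\le(n+1)P_t$. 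Convergence of $\sum_n(n+1)^{-k+\frac12}$ (resp.\ of $\sum_m2^{m(\alpha-1)}$ in the dyadic variant used for $k=1$, which forces $\alpha<1$ there) is what closes the argument, and Proposition \ref{ws} --- which you never invoke --- is the actual device converting the Campanato datum $\|P_s|(I-P_s)^kf|^2\|_\infty^{1/2}$ into derivative bounds. Your step (4), by contrast, is correct but is not the delicate point: $\|P_tg\|_\infty\le\|P_t|g|^2\|_\infty^{1/2}$ is immediate from Kadison--Schwarz.
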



Theorem \ref{e25} not only plays a key role in showing our main result---Theorem \ref{b3}(i), but also for the first time establishes the equivalence relationship between the semigroup Campanato spaces and the semigroup Lipschitz spaces. This sheds new light on the commutative theory, for which one may see e.g. \cite{g85,jtw83, tw80, c64,t92,g83}. In these references, it is proved  on $\mathbb R^n$ that the coincidence of the Campanato spaces defined geometrically such as \eqref{02} and the Lipschitz spaces defined by difference.

\bigskip
In handling the BMO estimate of some spectral multipliers associated with operators of various types such as the H\"ormander-type spectral multipliers \cite{DSY} and the Schr\"odinger groups \cite{{cdly20}},  a higher order cancellation property of BMO spaces \cite{hm09, sgd11} plays an essential role. See also a recent work \cite{FHW} for some sharp estimates of the Schr\"odinger groups and a higher order cancellation BMO in the noncommutative setting.
We show with a surprise that the Campanato spaces defined via Markov semigroup admit this self-improving property automatically, that is, the integer $[\alpha]+1$ in the definition of $\mathcal{L}^{c}_{\alpha}(\mathcal{P})$-norm---\eqref{p1}---can be substituted for any integer greater than $\alpha$.

Fix $\alpha>0$ and an integer $k\geq[\alpha]+1$. We define for $f\in\mathcal M$,
\begin{align}\label{09}
  \left\|f\right\|_{\mathcal L_{\alpha,k}^{c}(\mathcal P)}=\|f\|_{\infty}+\sup_{t>0}\frac{1}{t^{\alpha}}
   \left\|P_{t}|(I-P_{t})^{k}f|^{2}\right\|_{\infty}^{\frac{1}{2}},
   \end{align}
   and the resulting space ${\mathcal L_{\alpha,k}^{c}(\mathcal P)}$ in an obvious way.

\begin{theorem}\label{22}
Let $0<\alpha<\alpha_{0}$, where $\alpha_{0}$ verifies $2^{\alpha_{0}-2}+3^{\alpha_{0}-3}=1$. Then, the space $\mathcal L_{\alpha}^{c}(\mathcal P)$ coincides with $\mathcal L_{\alpha,k}^{c}(\mathcal P)$ with equivalent norms for any integer $k\geq[\alpha]+1$.
\end{theorem}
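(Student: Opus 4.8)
\textit{Proof plan.} The argument has two parts: an easy inclusion valid for every $\alpha>0$, and the substantial reverse inclusion, which forces the restriction $\alpha<\alpha_0$.

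\emph{The easy inclusion.} First I would show $\|f\|_{\mathcal L_{\alpha,k}^{c}(\mathcal P)}\lesssim\|f\|_{\mathcal L_{\alpha}^{c}(\mathcal P)}$ for every $\alpha>0$ and every integer $k\ge[\alpha]+1$, with no extra hypothesis. The key point is that for fixed $t>0$ the functional $n_{t}(h):=\|P_{t}|h|^{2}\|_{\infty}^{1/2}$ is a genuine seminorm on $\mathcal M$: for $h_{1},h_{2}\in\mathcal M$ one has $[h_{i}^{*}h_{j}]_{i,j=1,2}\ge0$ in $M_{2}(\mathcal M)$, hence $[P_{t}(h_{i}^{*}h_{j})]\ge0$ by $2$-positivity of $P_{t}$, and the Cauchy--Schwarz estimate for positive $2\times2$ operator matrices gives $n_{t}(h_{1}+h_{2})\le n_{t}(h_{1})+n_{t}(h_{2})$. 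Moreover, by the Kadison--Schwarz inequality together with the contractivity of $P_{s}$ on $\mathcal M$, $n_{t}(P_{s}h)\le n_{t}(h)$ for all $s\ge0$, so that $n_{t}(q(P_{t})h)\le(\sum_{l}|c_{l}|)\,n_{t}(h)$ for any polynomial $q(x)=\sum_{l}c_{l}x^{l}$. Applying this with $q(x)=(1-x)^{k-[\alpha]-1}$ to $h=(I-P_{t})^{[\alpha]+1}f$ gives $n_{t}((I-P_{t})^{k}f)\le2^{k-[\alpha]-1}n_{t}((I-P_{t})^{[\alpha]+1}f)$; dividing by $t^{\alpha}$, taking the supremum over $t$, and adding $\|f\|_{\infty}$ yields the claim.

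\emph{Reduction of the hard inclusion.} The real work is the reverse bound $\|f\|_{\mathcal L_{\alpha}^{c}(\mathcal P)}\lesssim\|f\|_{\mathcal L_{\alpha,k}^{c}(\mathcal P)}$, and since $k$ is a fixed integer it suffices, by composing finitely many steps, to prove
\begin{align*}
\|f\|_{\mathcal L_{\alpha,m}^{c}(\mathcal P)}\lesssim\|f\|_{\mathcal L_{\alpha,m+1}^{c}(\mathcal P)}\qquad\text{for every integer }m\ge[\alpha]+1.
\end{align*}
Fix such an $m$ and set $\phi_{j}:=\sup_{t>0}t^{-\alpha}n_{t}((I-P_{t})^{j}f)$; since the $\|f\|_{\infty}$-term is common to all the norms, the goal is $\phi_{m}\lesssim\phi_{m+1}+\|f\|_{\infty}$. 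I would work from an algebraic identity, obtained at a fixed scale $t$ by combining $(I-P_{at})^{m}=(I-P_{t})^{m}(I+P_{t}+\dots+P_{(a-1)t})^{m}$ for $a=2,3$ with the binomial expansion of $(I+P_{t}+\dots+P_{(a-1)t})^{m}$ around $P_{t}=I$, of the shape
\begin{align*}
(I-P_{t})^{m}=a_{2}(I-P_{2t})^{m}+a_{3}(I-P_{3t})^{m}+\sum_{j\ge1}(I-P_{t})^{m+j}b_{j}(P_{t}),
\end{align*}
with $a_{2},a_{3}>0$, $a_{2}2^{m}+a_{3}3^{m}=1$ (forced by matching the leading behaviour at $P_{t}=I$), and $b_{j}$ polynomials whose coefficient $\ell^{1}$-norms are uniformly bounded. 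Applying $n_{t}$ and the easy bound above, the last sum contributes $\lesssim n_{t}((I-P_{t})^{m+1}f)\le t^{\alpha}\phi_{m+1}$, while each main term has to be compared back to $\phi_{m}$ through its natural scale $at$. Here lies the delicate point: the obvious comparison $n_{t}((I-P_{at})^{m}f)\le a^{m}\,n_{t}((I-P_{t})^{m}f)$ cancels the coefficient $a^{m}$ exactly and leaves the recursion marginal, so one needs a sharper comparison between the weights $P_{t}$ and $P_{at}$ on functions of the form $(I-P_{at})^{m}f$; after the bookkeeping this produces a recursion of the form $\phi_{m}\le(2^{\alpha-2}+3^{\alpha-3})\phi_{m}+C\phi_{m+1}+C\|f\|_{\infty}$, which closes precisely when $2^{\alpha-2}+3^{\alpha-3}<1$, i.e.\ $0<\alpha<\alpha_{0}$, giving $\phi_{m}\lesssim\phi_{m+1}+\|f\|_{\infty}$.

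\emph{Remaining points and the main obstacle.} To absorb the $\phi_{m}$-term one first needs $\phi_{m}<\infty$ a priori; this is automatic for $t\ge1$, where $t^{-\alpha}n_{t}((I-P_{t})^{m}f)\le2^{m}\|f\|_{\infty}$, but not as $t\to0$, so I would establish qualitative finiteness separately, either by running the same scheme with non-optimal constants or, when $0<\alpha<2$, through the identification with $\Lambda_{\alpha}(\mathcal P)$ from Theorem \ref{e25}. The main obstacle, and the heart of the proof, is exactly the sharp weight comparison that produces the exponents $2^{\alpha-2}$ and $3^{\alpha-3}$: it is this estimate — not the algebraic identity, nor the seminorm bookkeeping — that forces the threshold $\alpha_{0}$ and makes the self-improvement work.
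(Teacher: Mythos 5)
Your easy inclusion $\|f\|_{\mathcal L_{\alpha,k}^{c}(\mathcal P)}\lesssim\|f\|_{\mathcal L_{\alpha}^{c}(\mathcal P)}$ is correct; the seminorm property of $n_t$ plus $n_t(P_sh)\le n_t(h)$ gives the constant $2^{k-[\alpha]-1}$, which is in fact slightly better than the paper's $3^{k-[\alpha]-1}$ obtained via the triangle inequality and the quasi-monotonicity $P_{2t}\le 2P_t$.

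The hard direction, however, has a genuine gap at exactly the point you flag as "the delicate point". Your scheme needs to bound $n_t\bigl((I-P_{at})^mf\bigr)$ for $a=2,3$ by something close to $n_{at}\bigl((I-P_{at})^mf\bigr)\le(at)^{\alpha}\phi_m$, i.e.\ you need a comparison of the form $P_t g\le C\,P_{at}g$ for the relevant positive elements $g$. The only monotonicity available for a general subordinated Poisson semigroup is \eqref{4}, which reads $P_{at}\le aP_t$ for $a>1$ --- the opposite direction; indeed $n_{at}(h)\le n_t(h)$ always holds (since $P_{at}=P_{(a-1)t}P_t$ and $P_{(a-1)t}$ is an $L^\infty$-contraction), while $n_t(h)\le Cn_{at}(h)$ fails in general. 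You do not supply the "sharper comparison", and you reverse-engineer the constant $2^{\alpha-2}+3^{\alpha-3}$ rather than derive it; note its asymmetry in the exponents ($-2$ versus $-3$), which does not match a symmetric treatment of the $a=2$ and $a=3$ terms. In the paper this constant arises by a different mechanism: the hard direction is routed entirely through the Lipschitz scale, via $\|f\|_{\mathcal L_{\alpha}^{c}(\mathcal P)}\lesssim\|f\|_{\Lambda_{\alpha}(\mathcal P)}\lesssim\|f\|_{\Lambda_{\alpha,k}(\mathcal P)}\lesssim\|f\|_{\mathcal L_{\alpha,k}^{c}(\mathcal P)}$ (Theorem \ref{e25}, Proposition \ref{20}, Theorem \ref{261}). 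There the identity $I=(I-P_t)^{k-1}+\sum_{r=1}^{k-1}(-1)^{r+1}C_{k-1}^{r}P_{rt}$ is applied under the derivative norm, and the time shift combines exactly with the derivative, $\frac{\partial^{k}P_t}{\partial t^{k}}P_{rt}f=\left.\frac{\partial^{k}P_v f}{\partial v^{k}}\right|_{v=(r+1)t}$, so the homogeneity of the Lipschitz seminorm yields the factor $(r+1)^{\alpha-k}$ for free; the threshold is then $F(\alpha,k)=\sum_{r=1}^{k-1}C_{k-1}^{r}(r+1)^{\alpha-k}<1$, whose worst case $k=3$ gives $2^{\alpha-2}+3^{\alpha-3}$ (Lemma \ref{j1j}). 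No analogue of this exact rescaling is available for $(I-P_{at})^m$ under the weight $P_t$, so your recursion does not close as written. (Your a priori finiteness worry is also handled differently in the paper: the absorption is performed on $\mathcal M^{\circ}$ after the splitting of Lemma \ref{gf}, using the convergent telescoping $\sum_{n\ge1}P_{nt}(I-P_t)^kf$.)
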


Our approach to this equivalence is again via the Lipschitz spaces. For this purpose, we show that the semigroup Lipschitz spaces admit a higher order cancellation property, see Section 3 for details. As justified above, Theorem \ref{22} and its proof are completely new in the commutative setting.


Finally, motivated by the work \cite{tms19,mj12} on the semigroup BMO spaces, we will introduce the Campanato/Lipschitz spaces associated to the semigroup $(T_t)_{t\geq0}$ itself, $\mathcal{L}^{c}_{\alpha}(\mathcal{T})$, $\ell_{\alpha}^{c}(\mathcal T)$ and ${\Lambda_{\frac{\alpha}{2}}(\mathcal T)}$, and explore their connection with the ones via the subordinated Poisson semigroup $\mathcal{L}^{c}_{\alpha}(\mathcal{P})$, $\ell_{\alpha}^{c}(\mathcal P)$ and ${\Lambda_{\alpha}(\mathcal P)}$. To this end, we need the quasi-monotone property of semigroups that will be recalled in Section 2.2.

\begin{theorem}\label{q2}
Let $(T_{t})_{t\geq0}$ be a Markov semigroup, $(P_{t})_{t\geq0}$ its subordinated Poisson semigroup. Assume that $(T_{t})_{t\geq0}$ is quasi-monotone.  Let $f\in \mathcal M$ and $0<\alpha<1$. Then
\begin{itemize}
  \item [\rm{(i)}]  $\|f\|_{\mathcal L_{\alpha}^{c}(\mathcal P)}\lesssim \|f\|_{\mathcal L_{\frac{\alpha}{2}}^{c}(\mathcal T)}$,
  \item [\rm{(ii)}]  $\|f\|_{\Lambda_{\alpha}(\mathcal P)}\lesssim\|f\|_{\Lambda_{\frac{\alpha}{2}}(\mathcal T)}$,
  \item [\rm{(iii)}] and $\|f\|_{\ell_{\frac{\alpha}{2}}^{c}(\mathcal T)}\lesssim \|f\|_{\ell_{\alpha}^{c}(\mathcal P)}$ if moreover $(T_{t})_{t\geq0}$ has the $\Gamma^{2}\geq0$ property.
\end{itemize}
\end{theorem}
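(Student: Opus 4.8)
The plan is to relate the Poisson semigroup $(P_t)$ and the Markov semigroup $(T_t)$ through the subordination formula $P_t=\int_0^\infty \mu_t(s)\,T_s\,ds$, where $\mu_t(s)=\frac{t}{2\sqrt{\pi}}s^{-3/2}e^{-t^2/(4s)}$ is the density of the one-sided stable law, and to push the defining quantities of the $\mathcal T$-Campanato/Lipschitz norms through this averaging. For part (ii), I would start from $\frac{\partial^{2}P_tf}{\partial t^2}$, which by subordination can be written as an integral of $\frac{\partial T_s f}{\partial s}$ (or $\frac{\partial^2 T_s f}{\partial s^2}$) against a kernel obtained by differentiating $\mu_t$; the quasi-monotone hypothesis on $(T_t)$ is exactly what is needed to control $\|\partial_s^{j}T_sf\|_\infty$ in terms of $s^{-j/2}$ times the $\mathcal L^c_{\alpha/2}(\mathcal T)$- or $\Lambda_{\alpha/2}(\mathcal T)$-data, after which one integrates $\int_0^\infty s^{(\alpha/2)-j}(\text{kernel})\,ds$ and checks the homogeneity in $t$ comes out as $t^{\alpha-([\alpha]+1)}$ for $0<\alpha<1$, so $[\alpha]+1=1$. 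The boundedness term $\|f\|_\infty$ transfers trivially since $P_t$ and $T_t$ are both contractions.

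For part (i), the natural route is to combine (ii) with Theorem \ref{e25}: one has $\|f\|_{\mathcal L^c_\alpha(\mathcal P)}\lesssim_\alpha\|f\|_{\Lambda_\alpha(\mathcal P)}\lesssim\|f\|_{\Lambda_{\alpha/2}(\mathcal T)}$, so it remains to see $\|f\|_{\Lambda_{\alpha/2}(\mathcal T)}\lesssim\|f\|_{\mathcal L^c_{\alpha/2}(\mathcal T)}$. For $0<\alpha<1$ we have $0<\alpha/2<1/2<2$, but Theorem \ref{e25} is stated for the Poisson semigroup, not a general Markov semigroup, so one cannot invoke it verbatim; instead I expect the cleaner path for (i) is a direct argument, estimating $\|P_t|(I-P_t)f|^2\|_\infty$ by first using $(I-P_t)f=\int_0^t \partial_r P_r f\,dr$ together with the Kadison–Schwarz inequality $P_t|g|^2\ge|P_tg|^2$ and its consequences, and then subordinating $\partial_r P_r$ back to $(T_s)$ where the $\mathcal L^c_{\alpha/2}(\mathcal T)$-control and quasi-monotonicity apply. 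Either way the scalar point is that $(I-P_t)^{1}$ on the Poisson side corresponds, after subordination, to the gradient-type quantity $(I-T_s)$-increments on the $\mathcal T$ side at scale $s\sim t^2$, explaining the $\alpha\leftrightarrow\alpha/2$ matching of exponents.

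Part (iii) goes in the opposite direction and is where the $\Gamma^2\ge0$ hypothesis enters: the little Campanato quantity $T_s|f|^2-|T_sf|^2$ is, up to the integral $\int_0^s T_{s-u}\Gamma(T_uf,T_uf)\,du$, the "carré du champ" at scale $s$, and $\Gamma^2\ge0$ gives the monotonicity/comparison that lets one dominate $\frac1{s^{\alpha/2}}\|T_s|f|^2-|T_sf|^2\|_\infty$ by the analogous Poisson quantity $\frac1{t^\alpha}\|P_t|f|^2-|P_tf|^2\|_\infty$ at the matched scale $t\sim\sqrt s$ — the same mechanism Junge–Mei use for BMO. I would first record, under $\Gamma^2\ge0$, the inequality $P_t|f|^2-|P_tf|^2\gtrsim$ (a positive average of) $T_s|f|^2-|T_sf|^2$ for $s\lesssim t^2$, then take norms and optimize in the scale. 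The main obstacle throughout is part (iii): one must handle the carré-du-champ terms $\Gamma(T_uf,T_uf)$ as genuine positive operators in $\mathcal M$ and justify the operator inequalities under only the finite–von Neumann–algebra hypotheses (boundedness/positivity of the relevant integrals, density issues for the domain of the generator), rather than the mere scalar bookkeeping that suffices for (i) and (ii). I expect this to require the quasi-monotone and $\Gamma^2\ge0$ properties to be used in tandem, precisely as set up in Section 6.
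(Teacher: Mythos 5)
Your sketch identifies the right theme (subordination and the scale matching $t\leftrightarrow t^{2}$), but each part has a concrete gap. For (i), your proposed route --- write $(I-P_{t})f=\int_{0}^{t}\partial_{r}P_{r}f\,dr$ and then control $\partial_{r}P_{r}f$ from the $\mathcal L^{c}_{\alpha/2}(\mathcal T)$ data --- cannot work as stated: the Campanato norm of $\mathcal T$ gives no pointwise control on $\partial_{r}P_{r}f$ or $\partial_{s}T_{s}f$ without first proving a Campanato-to-Lipschitz implication for $\mathcal T$, which is exactly what is unavailable. The paper's proof of (i) instead rests on two ingredients you do not supply: a lemma (Lemma \ref{q1}) showing that under quasi-monotonicity the outer averaging operator may be switched, $\|f\|_{\mathcal L^{c}_{\alpha}(\mathcal P)}\simeq\|f\|_{\infty}+\sup_{t}t^{-\alpha}\|T_{t^{2}}|f-P_{t}f|^{2}\|_{\infty}^{1/2}$ (one direction uses $T_{t^{2}}\lesssim P_{t}$ on positive elements, the other a chaining argument); and, after subordinating $f-P_{t}f$ into an average of $f-T_{u}f$ and applying Kadison--Schwarz, a dyadic telescoping of $f-T_{u}f$ through the scales $t^{2},2t^{2},4t^{2},\dots$ to handle $u$ far from $t^{2}$, where the single-scale bound fails. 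Your phrase ``increments at scale $s\sim t^{2}$'' gestures at this but the chaining is the whole content. For (ii), your assertion that quasi-monotonicity ``is exactly what is needed to control $\|\partial_{s}^{j}T_{s}f\|_{\infty}$'' is wrong: quasi-monotonicity is an order property of $T_{t}/t^{\beta}$ on $\mathcal M_{+}$ and says nothing about time derivatives. (A direct subordination proof of (ii) is in fact possible for $0<\alpha<1$ using only $\int_{0}^{\infty}\partial_{t}\mu_{t}(s)\,ds=0$ and the bound $\|f-T_{s}f\|_{\infty}\lesssim s^{\alpha/2}\|f\|_{\Lambda_{\alpha/2}(\mathcal T)}$, but that is not the argument you gave; the paper simply deduces (ii) from (i), Theorem \ref{e25}, and the easy inequality $\|f\|_{\mathcal L^{c}_{\alpha/2}(\mathcal T)}\lesssim\|f\|_{\Lambda_{\alpha/2}(\mathcal T)}$.)

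For (iii) the gap is more serious. Your plan is to prove a lower bound ``$P_{t}|f|^{2}-|P_{t}f|^{2}\gtrsim$ a positive average of $T_{s}|f|^{2}-|T_{s}f|^{2}$ for $s\lesssim t^{2}$'' and then optimize in the scale; this is not the mechanism and is not established by $\Gamma^{2}\geq0$. The actual argument bounds $t^{-\alpha/2}\|T_{t}|f|^{2}-|T_{t}f|^{2}\|_{\infty}^{1/2}$ from above by splitting $f=(f-P_{\sqrt t}f)+P_{\sqrt t}f$: the first piece is controlled via Lemma \ref{q1} and the passage $\mathcal L^{c}_{\alpha}(\mathcal P)\lesssim\ell^{c}_{\alpha}(\mathcal P)$, while the second piece is written, via Lemma \ref{e.1}(i), as $2\int_{0}^{t}T_{t-s}\Gamma(T_{s}P_{\sqrt t}f)\,ds$ and then, using $\Gamma^{2}\geq0$ to shift the Carr\'e du champ along the semigroup together with a Fubini and an explicit kernel computation, is dominated by $(h(\alpha))^{1/2}\sup_{t}t^{-\alpha/2}\|T_{t}|f|^{2}-|T_{t}f|^{2}\|_{\infty}^{1/2}$ with $h(\alpha)<1$. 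The proof closes only because this self-referential term can be absorbed into the left-hand side; without identifying that the constant is strictly less than one, the estimate is circular. This absorption step, and the verification $h(\alpha)<h(1)=1$, are entirely absent from your proposal.
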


\begin{remark}
All the results in the present paper are restricted to finite von Neumann algebras, this is mainly due to the technical reasons, and we believe that they admit generalizations on semifinite von Neumann algebras. On the other hand, in Theorem \ref{b3}(i), Theorem \ref{e25} and Theorem \ref{22}, there are some restrictions on the scale of $\alpha$ because of technical difficulties, and we also believe that they should hold for all $\alpha>0$. All this will be taken care of elsewhere.
\end{remark}

\begin{remark}
As the reader might have noted that only the inhomogeneous space is considered above, that is the presence of $\|\cdot\|_\infty$ in the definition of the norm in \eqref{p1}. The reason behind is that in noncommutative setting we have no idea how to define the homogeneous Campanato/Lipschitz spaces. Indeed, in \eqref{02}, we refer to the definition of the classical homogeneous Campanato space, which can be defined as a subspace of the ambient space $L_{\mathrm{loc}}^{2}(\mathbb{R}^{n})$. However, at the time of writing, we are unable to find a proper ambient space for homogeneous Campanato spaces in the noncommutative setting. Actually, it is an open problem to find suitable noncommutative analogues of local $L_{p}$ spaces or (tempered) distributions such that one may define BMO and Campanato spaces as subspaces. Inspired by Stein's definition of inhomogeneous Lipschitz spaces (cf. \cite{s70}), we therefore investigate inhomogeneous Campanato spaces as given in \eqref{p1} which can be well defined inside $L_\infty$. 

On the other hand, if the homogeneous Campanato/Lipschitz spaces were well-defined, then all the results in the present paper hold also true for these spaces since all the proof appearing in the present paper are concerned with only the homogeneous part of the norms. 
\end{remark}

The rest of the paper follows essentially the order presented in this introduction.


Throughout the paper, we will occasionally use the notation $A\lesssim B$ to indicate that $A\leq CB$ where $C$ is a positive constant which is independent of the main parameters, and the notation $A\lesssim_{\alpha}B$ to denote $A\leq C_{\alpha}B$ where $C_{\alpha}$ is a positive constant depending only  on the parameter $\alpha$. 

\smallskip

{\it Note}: After finishing the preliminary version of this paper, we learned that a slightly different {\it homogeneous} Lipschitz space defined through quantum Markov semigroup had been studied in thorough by  Gonz$\mathrm{\acute{a}}$lez-P$\mathrm{\acute{e}}$rez in \cite{g19}. In particular
he introduced the  {\it homogeneous} semigroup Campanato spaces and proved the easy part of Theorem \ref{e25} in the case $0<\alpha<1$, and left the difficult one \eqref{192} as an open problem (see the end of \cite{g19}). 


\section{Preliminary \label{s2}}
\hskip\parindent
In this section, we present the definitions and some basic properties about von Neumann algebras and Markov semigroups of operators that we will need throughout the paper.
\subsection{von Neumann algebras}
\hskip\parindent
Let $\mathcal{M}$ be a von Neumann algebra and $\mathcal{M}_{+}$ be its positive part. We recall that a trace on $\mathcal{M}$ is a map $\tau:\mathcal{M}_{+}\rightarrow[0,\infty)$ satisfying $\tau(x+y)=\tau(x)+\tau(y)$  for any $x,y\in\mathcal{M}_{+}$;  $\tau(\lambda x)=\lambda\tau(x)$ for any $\lambda\in[0,\infty)$, $x\in\mathcal{M}_{+}$;  $\tau(x^{*}x)=\tau(xx^{*})$ for any $x\in\mathcal{M}$. We will say $\tau$ is
\begin{itemize}
  \item   faithful, if $\tau(x)=0$ implies $x=0$;
  \item  normal, if $\sup_{\alpha}\tau(x_{\alpha})=\tau(\sup_{\alpha}x_{\alpha})$ for any bounded increasing net $x_{\alpha}$ in $\mathcal{M}_{+}$;
  \item  semifinite, if for any $x\in\mathcal{M}_{+}$, there is a $y\in\mathcal{M}_{+}$ such that $0<y\leq x$ with $\tau(y)<\infty$;
  \item finite, if $\tau(1)<\infty$ (1 denotes the identity of $\mathcal{M}$).
\end{itemize}
 A von Neumann algebra $\mathcal{M}$ is called finite if it admits a normal finite faithful trace.

Now, let us briefly review the noncommutative $L^{p}$-spaces. Let $\mathcal{M}$ be a von Neumann algebra equipped with a normal semifinite faithful trace $\tau$ and $S^{+}_{\mathcal{M}}$ be the set of all positive elements $x\in\mathcal{M}$ with $\tau(\supp(x))<\infty$, where $\supp(x)$ denotes the support of $x$. Let $S_{\mathcal{M}}$ be the linear span of $S_{\mathcal{M}}^{+}$. Then every $x\in S_{\mathcal{M}}$ has a finite trace and $S_{\mathcal{M}}$ is a $w^{*}$-dense $*$-subalgebra of $\mathcal{M}$. Let $0<p<\infty$. The noncommutative $L^{p}$-space
associated with $(\mathcal{M}, \tau)$ is denoted as the closure of $S_{\mathcal{M}}$ with respect to the (quasi-)norm
\begin{align*}
 \|x\|_{p}=\left(\tau(|x|^{p})\right)^{\frac{1}{p}},\,\,x\in S_{\mathcal{M}},
\end{align*}
where $|x|=(x^{*}x)^{\frac{1}{2}}$ is the modulus of $x$. By convention, we set $L^{\infty}(\mathcal{M})=\mathcal{M}$ equipped with the operator norm $\|\cdot\|$. The spaces $L^{p}(\mathcal{M})$ can be also described as a subset of measurable operators with respect to $(\mathcal{M}, \tau)$. The reader can find more information on noncommutative-$L^{p}$ spaces in e.g. \cite{px03} and \cite{x07}.

\subsection{Noncommutative Markov semigroups}
\hskip\parindent
Let $M_{n}$ be the algebra of $n$ by $n$ complex matrices and $I_{n}$ is the identity operator on $M_{n}$. An operator $T: \mathcal{M}\rightarrow \mathcal{M}$ is called completely contractive if $T\bigotimes I_{n}: M_{n}(\mathcal{M})\rightarrow M_{n}(\mathcal{M})$ is contractive for every $n\geq1$. An operator $T: \mathcal{M}\rightarrow \mathcal{M}$ is called completely positive if $T\bigotimes I_{n}: M_{n}(\mathcal{M})\rightarrow M_{n}(\mathcal{M})$ is positive for every $n\geq1$.

\smallskip
Let $(\mathcal{M}, \tau)$ be a pair of a von Neumann algebra and a normal finite faithful trace. A family of operators $\mathcal T=(T_{t})_{t\geq0}$ over $\mathcal M$ is said to be a semigroup if $T_{t}T_{s}=T_{t+s}$, $T_{0}=id$. $\mathcal T$ is said to be a noncommutative Markov semigroup if
\begin{itemize}
  \item [(i)] every $T_{t}$ is a normal completely positive map on $\mathcal M$ such that $T_{t}(1)=1$;
  \item [(ii)] every $T_{t}$ is self-adjoint with respect to the trace $\tau$, i.e. $\tau(T_{t}(f^{*})g)=\tau(f^{*}T_{t}(g))$ for any $f$, $g\in\mathcal{M}\cap L^{2}(\mathcal{M})$;
  \item [(iii)] $T_{t}(f)\rightarrow f$ as $t\rightarrow0^{+}$ in the weak-$*$ topology of $\mathcal{M}$.
\end{itemize}

Note that $\mathrm{(i)}$ implies that $T_{t}$ is a complete contraction on $\mathcal M$, in particular,
\begin{align}\label{5151}
  \|T_{t}f\|_{\infty}\leq\|f\|_{\infty}.
\end{align}
$\mathrm{(i)}$ and $\mathrm{(ii)}$ imply that $T_{t}$ is $\tau$-preserving, that is, $\tau(T_{t}f)=\tau(f)$ for all $f\in\mathcal M\cap L^1(\mathcal M)$, so that each $T_{t}$ naturally extends to a contraction on $L^{p}(\mathcal{M})$ for any $1\leq p<\infty$ (see more details in \cite{jx07}).  More information of quantum Markov semigroups can be found in  \cite[Chapter 5]{jmx06}.

A Markov semigroup $\mathcal T=(T_{t})_{t\geq0}$ always admits an infinitesimal generator
\begin{align*}
  A=\lim_{t\rightarrow0}\frac{id-T_{t}}{t} \,\,\,\,\text{with}\,\, \,\,T_{t}=e^{-tA}
\end{align*}
 and its domain in $L^{\infty}(\mathcal{M})$ is defined as
\begin{align*}
 \mathrm{dom}_{\infty}(A)=\left\{f\in \mathcal{M}: \,\,\lim_{t\rightarrow0}\frac{f-T_{t}f}{t}\,\, \text{exists}\right\},
\end{align*}
where the limit is taken in the weak-$*$ topology. It is easy to see that $\frac{1}{s}\int_{0}^{s}T_{t}fdt\in\mathrm{dom}_{\infty}(A)$ for any $s>0$ and $f\in \mathcal{M}$, which implies that $\mathrm{dom}_{\infty}(A)$ is weak-$*$ dense in $\mathcal M$. We write $A_{\infty}$ for the restriction of $A$ on $\mathrm{dom}_{\infty}(A)$. Then one can check that $T_{s}f\in \mathrm{dom}_{\infty}(A)$ for any $s>0$ and $f\in \mathcal{M}$. See e.g. \cite[Page 694]{mj12} for more details.

\begin{definition}\label{p0}
A semigroup $\mathcal T=(T_{t})_{t\geq0}$ is called quasi-decreasing (resp. quasi-increasing) if there exists $\beta\geq 0$ such that $\frac{T_{t}}{t^{\beta}}$ decreases (resp. increases), that is,  $\frac{T_{t}(f)}{t^{\beta}}\geq \frac{T_{s}(f)}{s^{\beta}}$ (resp. $\frac{T_{t}(f)}{t^{\beta}}\leq \frac{T_{s}(f)}{s^{\beta}}$) for any $f\in\mathcal M_+$ and $s>t>0$. We will say a semigroup $(T_{t})_{t\geq0}$ is quasi-monotone if it is either quasi-decreasing or quasi-increasing.
\end{definition}
For a semigroup $\mathcal T=(T_{t})_{t\geq0}$ generated by $A$, the subordinated Poisson semigroup $\mathcal P=(P_{t})_{t \geq 0}$ is defined by $P_{t}=e ^{-t\sqrt{A}}$. It is easy to check that $(P_{t})_{t\geq0}$ is again a Markov semigroup satisfying the same properties (i)-(iii) as $\mathcal T=(T_{t})_{t\geq0}$. Note that $(P_{t})_{t\geq0}$ is chosen such that $(\frac{\partial^{2}}{\partial t^{2}}-A)P_{t}=0.$
 By functional calculus and an elementary identity, we have the following subordination formula (see e.g. \cite[Page 47]{s02}):
\begin{align}\label{2}
  P_{t}=\frac{1}{2\sqrt{\pi}}\int_{0}^{\infty}t e^{-\frac{t^{2}}{4u}}u^{-\frac{3}{2}}T_{u}du.
\end{align}
By Definition \ref{p0}, it is easy to see that the subordinated Poisson semigroup $(P_{t})_{t \geq 0}$ is quasi-decreasing with $\beta=1$. Indeed, since $T_{u}$ is positive and $e^{-\frac{t^{2}}{4u}}u^{-\frac{3}{2}}$ is a function decreasing with respect to $t$, we then have
\begin{align}\label{4}
 P_{t}f\leq\frac{t}{s}P_{s}f
 \end{align}
for any positive $f$ and $0<s\leq t$.

 According to the mean ergodic theorem \cite{ja1,ja2} (see also \rm{\cite[Section 5]{mj12}}), we have the following splitting lemma which is crucial in the present paper.
\begin{lemma}\label{gf}
If the trace $\tau$ is finite, then $(P_{t})_{t\geq0}$ induces a canonical splitting on $\mathcal{M}$ such that
$$\mathcal{M}=\mathcal{M}^{\circ}\oplus ker(A_{\infty}^{\frac{1}{2}}).$$
Here $ker(A_{\infty}^{\frac{1}{2}})=\{f\in \mathrm{dom}_{\infty}(A^{\frac{1}{2}}): A^{\frac{1}{2}}f=0\}=\{f\in \mathcal{M}: \;P_{t}f=f\;\forall t>0\}$ and $\mathcal{M}^{\circ}=\{f\in \mathcal{M}: \,\lim_{t\rightarrow\infty}P_{t}f=0\}$.
In particular, this implies that any $f\in \mathcal{M}$ can be decomposed as
 $$ f=f_0+f_1,\,\,\, f_0\in\mathcal{M}^{\circ},\,\,\,f_1\in ker(A_{\infty}^{\frac{1}{2}}).$$
\end{lemma}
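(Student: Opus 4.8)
The plan is to reduce everything to the classical mean ergodic theorem on the Hilbert space $L^2(\mathcal M)$ and then to transfer the resulting orthogonal splitting back to $\mathcal M$ by means of a conditional expectation. First, using that $\tau$ is finite I would embed $\mathcal M=L^\infty(\mathcal M)$ contractively into $L^2(\mathcal M)$ and note that, by property (ii) (inherited by $(P_t)_{t\ge0}$), each $P_t$ extends to a self-adjoint, unital, positive contraction on $L^2(\mathcal M)$; the resulting semigroup is strongly continuous with generator $A^{1/2}$, where $A\ge0$ is the positive self-adjoint generator of $(T_t)_{t\ge0}$ on $L^2(\mathcal M)$, so that $P_t=e^{-tA^{1/2}}$ on $L^2(\mathcal M)$ by subordination. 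The spectral theorem for $A^{1/2}$ then gives the orthogonal decomposition $L^2(\mathcal M)=\ker(A^{1/2})\oplus\overline{\mathrm{ran}(A^{1/2})}$: on $\ker(A^{1/2})$ each $P_t$ is the identity, whereas for $g\in\overline{\mathrm{ran}(A^{1/2})}$ dominated convergence in $\|P_tg\|_2^2=\int_{(0,\infty)}e^{-2t\lambda}\,d\mu_g(\lambda)$ shows $\|P_tg\|_2\to0$ as $t\to\infty$. Equivalently, by the mean ergodic theorem (\cite{ja1,ja2}, cf.\ \cite[Section 5]{mj12}), $P_t\to E$ in the strong operator topology of $L^2(\mathcal M)$, where $E$ is the orthogonal projection onto $\ker(A^{1/2})$.

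Next I would promote this to $\mathcal M$. Set $\mathcal N:=\{x\in\mathcal M:\ P_tx=x\ \forall\,t>0\}$, which coincides with $\ker(A_\infty^{1/2})$ and, since $A^{1/2}x=0\Leftrightarrow Ax=0$, also with the common fixed-point set of $(T_t)_{t\ge0}$. From the Kadison--Schwarz inequality $P_t(x)^*P_t(x)\le P_t(x^*x)$ together with the trace invariance and faithfulness of $\tau$, one gets $P_t(x^*x)=x^*x$ (and $P_t(xx^*)=xx^*$) for $x\in\mathcal N$; a standard multiplicative-domain argument then shows that $\mathcal N$ is a $w^*$-closed $*$-subalgebra of $\mathcal M$, i.e.\ a von Neumann subalgebra, and that $\ker(A^{1/2})=L^2(\mathcal N)$ inside $L^2(\mathcal M)$. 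Hence the $L^2$-projection $E$ is the $L^2$-realization of the normal trace-preserving conditional expectation $E_{\mathcal N}\colon\mathcal M\to\mathcal N$; in particular $E(\mathcal M)=\mathcal N\subseteq\mathcal M$.

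Finally I would assemble the splitting. For $f\in\mathcal M$ put $f_1:=E_{\mathcal N}f\in\mathcal N=\ker(A_\infty^{1/2})$ and $f_0:=f-f_1\in\mathcal M$; since $f_1$ is fixed, $P_tf_0=P_tf-f_1\to Ef-f_1=0$ in $L^2(\mathcal M)$ as $t\to\infty$, and being bounded by $\|f_0\|_\infty$ in $\mathcal M$ this also converges weak-$*$ to $0$, so $f_0\in\mathcal M^\circ$. Moreover $\mathcal M^\circ\cap\ker(A_\infty^{1/2})=\{0\}$, since a fixed element with $P_tg\to0$ must vanish. This yields $\mathcal M=\mathcal M^\circ\oplus\ker(A_\infty^{1/2})$, with the projection onto the second summand realized by $E_{\mathcal N}|_{\mathcal M}$.

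I expect the only genuinely delicate point to be this transfer from $L^2$ to $L^\infty$: identifying $\ker(A^{1/2})$ with $L^2(\mathcal N)$ and recognizing $E$ as (the $L^2$-version of) a conditional expectation, so that the orthogonal splitting actually lives inside $\mathcal M$, together with pinning down the topology in which $\lim_{t\to\infty}P_tf=0$ is meant. Both are taken care of by the standard ergodic theory of quantum Markov semigroups on finite von Neumann algebras as in the cited references; the remaining steps are routine.
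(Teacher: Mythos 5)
Your proof is correct and takes essentially the same route as the paper: the paper does not actually prove Lemma \ref{gf} but simply invokes the mean ergodic theorem of \cite{ja1,ja2} and \cite[Section 5]{mj12}, which is precisely the $L^{2}$ spectral/ergodic splitting plus conditional-expectation transfer that you spell out. The details you supply (Kadison--Schwarz together with faithfulness of $\tau$ to land in the multiplicative domain, the identification of $\ker(A^{1/2})$ with $L^{2}(\mathcal{N})$, and the weak-$*$ reading of $\lim_{t\to\infty}P_{t}f_{0}=0$) are exactly the standard arguments behind that citation.
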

We will need the following Kadison-Schwarz inequalities. For any unital completely positive map $T$ on $\mathcal{M}$,
\begin{align}\label{1}
\left|T(f)\right|^{2} \leq T\left(|f|^{2}\right).
\end{align}
See e.g. \cite[Page 495]{k52}. In particular, for any measure space $(\Omega,d\mu)$,
\begin{align}\label{1.1}
\left| \int_{\Omega}ghd\mu\right|^{2}\leq \int_{\Omega}g^{2}d\mu\int_{\Omega}|h|^{2}d\mu,
\end{align}
where $g:\Omega\rightarrow[0,\infty)$ and $h:\Omega\rightarrow \mathcal{M}$ are functions such that all members in \eqref{1.1} make senses.

The following lemma, as an improvement of \rm{\cite[Corollary 2]{tms19}}, plays a key role in the whole paper.
\begin{proposition}\label{ws}
Let $f\in \mathcal{M}$. For all $j\in\mathbb{N}$ and $t>0$, we have
\begin{align}\label{a1}
  \left|\frac{\partial^{j} P_{t}f}{\partial t^{j}}\right|^{2}\leq \frac{2^{j(j+4)}}{t^{2j}}P_{t}|f|^{2}.
\end{align}
\end{proposition}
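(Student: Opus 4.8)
The plan is to control the time-derivatives $\partial_t^j P_t f$ by relating them to the semigroup at time $t/2$, using the semigroup property $P_t = P_{t/2} P_{t/2}$ together with the elementary fact that the generator $\sqrt{A}$ of a bounded analytic semigroup is comfortably bounded when applied to $P_{t/2} g$. More precisely, I would first establish a pointwise (operator-valued) bound of the form
\begin{align*}
 \left\|\frac{\partial^{j} P_{s} g}{\partial s^{j}}\right\|_{\infty} \le \frac{c_j}{s^{j}}\|g\|_{\infty},\qquad g\in\mathcal M,\ s>0,
\end{align*}
with an explicit constant $c_j$. This is the standard analyticity estimate: $\partial_s^j P_s = (-\sqrt{A})^j P_s$, and by the spectral theorem (functional calculus for the positive self-adjoint generator) together with $\sup_{\lambda\ge 0}(\lambda s)^j e^{-\lambda s}=(j/e)^j$, one gets $c_j=(j/e)^j$ on $L^2$; then one upgrades to $L^\infty$ by writing $P_s = P_{s/2}P_{s/2}$ and using that $P_{s/2}$ is a complete contraction on $\mathcal M$ together with the Cauchy integral representation $\partial_s^j P_s g = \frac{j!}{2\pi i}\oint (z-s)^{-j-1} P_z g\, dz$ over a circle of radius $\sim s$, on which $\|P_z g\|_\infty \lesssim \|g\|_\infty$ by quasi-decreasing/analyticity in the sector. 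Tracking constants here is where one pays the (generous) factor leading to $2^{j(j+4)/2}$ after squaring; I would not optimize it.

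The second step is to convert this norm bound into the claimed Kadison--Schwarz-type inequality \eqref{a1}. Write $g = P_{t/2}f$ and note $\partial_t^j P_t f = \partial_t^j (P_{t/2} g)$ evaluated appropriately; more cleanly, use $P_t f = P_{t/2}(P_{t/2}f)$ and differentiate, so $\partial_t^j P_t f = \sum$ of terms, each of the form $(\partial^a P_{t/2})(\partial^b P_{t/2} f)$ with $a+b=j$, but it is simplest to just write $\partial_t^j P_t f = (\partial_t^j P_{t/2})\big|_{\text{applied to}} P_{t/2}f$ using the chain/semigroup structure — i.e. the operator $R := $ "$j$-th derivative of the Poisson semigroup from time $t/2$" satisfies $\partial_t^j P_t f = R(P_{t/2}f)$ and, being a finite linear combination of $P_s$'s (or via the subordination formula \eqref{2}), can be written as $R = \int_\Omega \phi\, dP$ — more concretely $R h = \int_0^\infty k_t(u)\, T_u h\, du$ for a kernel $k_t$ with $\int_0^\infty |k_t(u)|\,du \le c_j' t^{-j}$. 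Then \eqref{1.1} gives
\begin{align*}
 |R(P_{t/2}f)|^2 = \left|\int_0^\infty k_t(u)\,T_u P_{t/2}f\, du\right|^2 \le \left(\int_0^\infty |k_t(u)|\,du\right)\int_0^\infty |k_t(u)|\,|T_u P_{t/2}f|^2\, du,
\end{align*}
and since $|T_u P_{t/2}f|^2 \le T_u P_{t/2}|f|^2 \le \frac{?}{} $... — actually here one uses Kadison--Schwarz \eqref{1} for the unital completely positive map $T_u P_{t/2}$ to get $|T_u P_{t/2}f|^2 \le T_u P_{t/2}|f|^2$, then bounds $T_u P_{t/2}|f|^2 \le c\, P_t|f|^2$ using positivity and, say, \eqref{4} to absorb $T_u$ and the half-time shift into $P_t$ at the cost of a constant depending on $u/t$; combined with the $L^1$ kernel bound this yields $|R(P_{t/2}f)|^2 \le \frac{C_j}{t^{2j}} P_t|f|^2$. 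Squaring the earlier scalar estimate and matching it to the form $\frac{2^{j(j+4)}}{t^{2j}}$ fixes the constant.

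The main obstacle I anticipate is step two: getting the bound with $P_t|f|^2$ on the right-hand side (not merely $\|f\|_\infty^2$ or $P_{t/2}|f|^2$) while keeping everything operator-monotone. The cleanest route is probably to avoid the subordination formula and instead work directly: represent $\partial_t^j P_t f$ via a contour/resolvent integral $\partial_t^j P_t f = \frac{j!}{2\pi i}\oint_{|z-t|=t/2}(z-t)^{-j-1}P_z f\,dz$, then for each $z$ on the circle write $P_z f = P_{z/2}(P_{z/2}f)$ and use \eqref{1} for $P_{z/2}$ (noting $\mathrm{Re}\,z \ge t/2$ so $P_{z/2}$ makes sense and is a complete contraction) to get $|P_z f|^2 \le P_{z/2}|P_{z/2}f|^2$... but $P_{z/2}$ for complex $z$ is not completely positive, so this fails — one must instead take real representatives. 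So the real difficulty is organizing the argument so that only \emph{real}-time, genuinely completely positive maps appear when \eqref{1} is invoked, which forces the use of \eqref{2} and a careful accounting of the Gaussian kernel $t e^{-t^2/4u}u^{-3/2}$ and its $t$-derivatives; the $t$-derivatives of this kernel, when integrated in $u$, produce exactly powers $t^{-j}$ after a scaling $u \mapsto t^2 u$, and the resulting constant is the source of the exponent $j(j+4)$. I would therefore differentiate \eqref{2} $j$ times in $t$, substitute $u = t^2 v$, bound the resulting $v$-integral of $|{\partial_t^j}(\text{kernel})|$ crudely, and feed it into \eqref{1.1}+\eqref{1} as above.
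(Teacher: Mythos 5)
Your proposal correctly identifies the main tools (the subordination formula \eqref{2}, the Kadison--Schwarz inequalities \eqref{1} and \eqref{1.1}, and the splitting $P_t=P_{t/2}P_{t/2}$), and you correctly sense that the whole difficulty is in your step two. But the mechanism you propose for step two has a genuine gap. After applying \eqref{1.1} with the $L^1$-normalized kernel and then \eqref{1} to get $|T_uP_{t/2}f|^2\le T_uP_{t/2}|f|^2$, you claim $T_uP_{t/2}|f|^2\le c\,P_t|f|^2$ ``using positivity and \eqref{4}''. This is not available: for a general quantum Markov semigroup there is no pointwise domination of an individual $T_u$ by any $P_s$ as positive maps (the subordination formula only expresses $P_s$ as an average of the $T_u$'s, and already for the classical heat/Poisson pair the kernel ratio blows up as $u\to 0$), and \eqref{4} compares the Poisson semigroup only with itself, and only in the direction $P_{\text{large time}}\le C\,P_{\text{small time}}$ --- which is also the wrong direction if your decomposition leaves you with $P_s|f|^2$ for some $s<t$ that you then want to upgrade to $P_t|f|^2$. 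Any variant of your scheme that bounds the differentiated subordination kernel in $L^1(du)$ and then tries to dominate the resulting positive operator by a Poisson operator runs into the same wall.

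The missing idea is to apply Cauchy--Schwarz \eqref{1.1} with the Poisson measure itself $d\mu(u)=\frac{1}{2\sqrt\pi}\,\tfrac s2 e^{-s^2/16u}u^{-3/2}du$ as the reference measure and the ratio (derivative kernel)/(Poisson kernel) as the density; this ratio is a polynomial in $s^2/u$, hence square-integrable against $d\mu$ with $\int(\tfrac{s}{4u})^2d\mu=3(s/2)^{-2}$, so the second Cauchy--Schwarz factor is literally $P_{s/2}$ applied to a square --- no loss of Gaussian time and no operator comparison between $T_u$ and $P$ is ever needed. The paper does this one derivative at a time (so the density is just $1-\tfrac{s^2}{8u}$), obtaining $|\partial_s^iP_sf|^2\le \frac{8}{(s/2)^2}P_{s/2}\bigl|\partial^{i-1}P|_{s/2}f\bigr|^2$, and then iterates with times $t/2,t/4,\dots,t/2^j$; the telescoping sum $t/2+\cdots+t/2^j+t/2^j=t$ is what lands the right-hand side exactly on $P_t|f|^2$ and produces the constant $8^j2^{j(j+1)}=2^{j(j+4)}$. (A single-shot version is also salvageable along these lines, but only with the $L^2(d\mu)$ bound on the density, not with your $L^1$ bound plus operator domination.) Your first step, the analyticity norm bound $\|\partial_s^jP_sg\|_\infty\lesssim s^{-j}\|g\|_\infty$, is true but plays no role in the operator-valued inequality that is actually asserted.
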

\begin{proof}
We first claim that for any $(i,s)\in\mathbb N^*\times\mathbb R_+$, there holds
\begin{align*}
\left|\frac{\partial^{i} P_{s}f}{\partial s^{i}}\right|^{2}
\leq\frac{8}{(\frac{s}{2})^{2}}P_{\frac{s}{2}}\left|\left.\frac{\partial^{i-1} P_uf}{\partial u^{i-1}}\right|_{u=\frac{s}{2}}\right|^{2}.
\end{align*}
Fix $i\geq1$ and $s>0$. By using (\ref{2}) and the Kadison-Schwarz inequality \eqref{1.1}, one gets
\begin{align*}
 \left|\frac{\partial^{i} P_{s}f}{\partial s^{i}}\right|^{2}= &\left|\left.\frac{\partial P_{s_1}}{\partial s_1}\right|_{s_1=\frac{s}{2}}(\left.\frac{\partial^{i-1} P_{s_2}f}{\partial s_2^{i-1}}\right|_{s_2=\frac{s}{2}})\right|^{2}
\\= &\left|\frac{1}{2\sqrt{\pi}}\int_{0}^{\infty}(1-\frac{s^{2}}{8u}) e^{-\frac{s^{2}}{16u}}u^{-\frac{3}{2}}T_{u}(\left.\frac{\partial^{i-1} P_{s_2}f}{\partial s_2^{i-1}}\right|_{s_2=\frac{s}{2}})du\right|^{2}
\\ \leq&2\left|\frac{1}{2\sqrt{\pi}}\int_{0}^{\infty} e^{-\frac{s^{2}}{16u}}u^{-\frac{3}{2}}T_{u}(\left.\frac{\partial^{i-1} P_{s_2}f}{\partial s_2^{i-1}}\right|_{s_2=\frac{s}{2}})du\right|^{2}
\\&+2\left|\frac{1}{2\sqrt{\pi}}\int_{0}^{\infty}\frac{s^{2}}{8u} e^{-\frac{s^{2}}{16u}}u^{-\frac{3}{2}}T_{u}(\left.\frac{\partial^{i-1} P_{s_2}f}{\partial s_2^{i-1}}\right|_{s_2=\frac{s}{2}})du\right|^{2}
\\ =&\frac{2}{(\frac{s}{2})^{2}}\left|\frac{1}{2\sqrt{\pi}}\int_{0}^{\infty} \frac{s}{2}e^{-\frac{s^{2}}{16u}}u^{-\frac{3}{2}}T_{u}(\left.\frac{\partial^{i-1} P_{s_2}f}{\partial s_2^{i-1}}\right|_{s_2=\frac{s}{2}})du\right|^{2}
\\&+2\left|\frac{1}{2\sqrt{\pi}}\int_{0}^{\infty}\frac{s}{4u} (\frac{s}{2}e^{-\frac{s^{2}}{16u}}u^{-\frac{3}{2}})^{\frac{1}{2}}(\frac{s}{2}e^{-\frac{s^{2}}{16u}}u^{-\frac{3}{2}})^{\frac{1}{2}}T_{u}(\left.\frac{\partial^{i-1} P_{s_2}f}{\partial s_2^{i-1}}\right|_{s_2=\frac{s}{2}})du\right|^{2}
\\ \leq&\frac{2}{(\frac{s}{2})^{2}}\left|P_{\frac{s}{2}}(\left.\frac{\partial^{i-1} P_{s_2}f}{\partial s_2^{i-1}}\right|_{s_2=\frac{s}{2}})\right|^{2}
\\&+2\left(\frac{1}{2\sqrt{\pi}}\int_{0}^{\infty}\frac{s^{3}}{32u^{2}}e^{-\frac{s^{2}}{16u}}u^{-\frac{3}{2}}du\right)\left(\frac{1}{2\sqrt{\pi}}\int_{0}^{\infty}\frac{s}{2}e^{-\frac{s^{2}}{16u}}u^{-\frac{3}{2}}\left|T_{u}(\left.\frac{\partial^{i-1} P_{s_2}f}{\partial s_2^{i-1}}\right|_{s_2=\frac{s}{2}})\right|^{2}du\right).
\end{align*}
It is easy to calculate that
$$\left(\frac{1}{2\sqrt{\pi}}\int_{0}^{\infty}\frac{s^{3}}{32u^{2}} e^{-\frac{s^{2}}{16u}}u^{-\frac{3}{2}}du\right)=\frac{16}{s^{2}\sqrt{\pi}}\Gamma(\frac{5}{2})=\frac{3}{(\frac{s}{2})^{2}}.$$
Then by the Kadison-Schwarz inequality \eqref{1} and the subordination formula, we verify the claim
\begin{align*}
\left|\frac{\partial^{i} P_{s}f}{\partial s^{i}}\right|^{2}\leq&\frac{2}{(\frac{s}{2})^{2}}P_{\frac{s}{2}}\left|\left.\frac{\partial^{i-1} P_{s_2}f}{\partial s_2^{i-1}}\right|_{s_2=\frac{s}{2}}\right|^{2}+\frac{6}{(\frac{s}{2})^{2}}\left(\frac{1}{2\sqrt{\pi}}\int_{0}^{\infty}\frac{s}{2}e^{-\frac{s^{2}}{16u}}u^{-\frac{3}{2}}T_{u}\left|(\left.\frac{\partial^{j-1} P_{s_2}f}{\partial s_2^{i-1}}\right|_{s_2=\frac{s}{2}})\right|^{2}du\right)
\\
=&\frac{2}{(\frac{s}{2})^{2}}P_{\frac{s}{2}}\left|\left.\frac{\partial^{j-1} P_{s_2}f}{\partial s_2^{i-1}}\right|_{s_2=\frac{s}{2}}\right|^{2}+\frac{6}{(\frac{s}{2})^{2}}P_{\frac{s}{2}}\left|\left.\frac{\partial^{i-1} P_{s_2}f}{\partial s_2^{i-1}}\right|_{s_2=\frac{s}{2}}\right|^{2}=\frac{8}{(\frac{s}{2})^{2}}P_{\frac{s}{2}}\left|\left.\frac{\partial^{i-1} P_{s_2}f}{\partial s_2^{i-1}}\right|_{s_2=\frac{s}{2}}\right|^{2}.
\end{align*}
Now for $j\in\mathbb N$ and $t>0$,
applying repeatedly the claim to $(i,s)=(j,t),\ldots, (1, \frac{t}{2^{j-1}})$, and finally using the Kadison-Schwarz inequality \eqref{1} again, we get
\begin{align*}
\left|\frac{\partial^{j} P_{t}f}{\partial t^{j}}\right|^{2} \leq&\frac{8}{(\frac{t}{2})^{2}}\cdots \frac{8}{(\frac{t}{2^{j-1}})^{2}} \frac{8}{(\frac{t}{2^{j}})^{2}}P_{\frac{t}{2}+\cdots\frac{t}{2^{j-1}}+\frac{t}{2^{j}}}\left| P_{\frac{t}{2^{j}}}f\right|^{2}
\\ \leq&\frac{8}{(\frac{t}{2})^{2}}\cdots \frac{8}{(\frac{t}{2^{j-1}})^{2}} \frac{8}{(\frac{t}{2^{j}})^{2}} P_{\frac{t}{2}+\cdots\frac{t}{2^{j-1}}+\frac{t}{2^{j}}+\frac{t}{2^{j}}}\left| f\right|^{2}
\\=&\frac{8^{j}2^{j(j+1)}}{t^{2j}}P_{t}|f|^{2}=\frac{2^{j(j+4)}}{t^{2j}}P_{t}|f|^{2}.
\end{align*}
The desired assertion is proved.
\end{proof}

\section{Lipschitz spaces via semigroups}
\hskip\parindent
 Let $(P_{t})_{t\geq0}$ be the subordinated Poisson semigroup of a Markov semigroup $(T_{t})_{t\geq0}$. We introduce the Lipschitz spaces associated with $(P_{t})_{t\geq0}$ and present some properties that will be useful in the study of the Campanato spaces.

  For all $\alpha>0$, we define
\begin{align*}
  \Lambda_{\alpha}(\mathcal{P})=\left\{f\in\mathcal{M}:\,\,\,\|f\|_{\Lambda_{\alpha}(\mathcal{P})}<\infty\right\},
\end{align*}
with
\begin{align}\label{e33}
 \|f\|_{\Lambda_{\alpha}(\mathcal{P})}=\|f\|_{\infty}+\|f\|_{\dot{\Lambda}_{\alpha}(\mathcal{P})},\quad\|f\|_{\dot{\Lambda}_{\alpha}(\mathcal{P})}=\sup_{t>0}\frac{1}{t^{\alpha-([\alpha]+1)}}\left\|\frac{\partial^{[\alpha]+1}P_{t}f}{\partial t^{[\alpha]+1}}\right\|_{\infty}.
\end{align}
One can easily check that $\left\|f^{*}\right\|_{\Lambda_{\alpha}({\mathcal P)}}=\left\|f\right\|_{\Lambda_{\alpha}({\mathcal P})}$ from the positivity of the Poisson semigroup.

\begin{proposition}\label{lael}
Let $\alpha>0$. Then
\begin{itemize}
  \item [\rm{(i)}] $\left\|\cdot\right\|_{\Lambda_{\alpha}({\mathcal P})}$ is a norm on $\mathcal{M}$;
  \item [\rm{(ii)}] the space $\Lambda_{\alpha}(\mathcal{P})$ is complete with respect to the norm $\left\|\cdot\right\|_{\Lambda_{\alpha}({\mathcal P})}$.
\end{itemize}
\end{proposition}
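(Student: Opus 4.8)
The plan is to reduce both parts to a single fact: for each fixed $t>0$, the linear map $f\mapsto\frac{\partial^{[\alpha]+1}P_{t}f}{\partial t^{[\alpha]+1}}$ is a bounded operator on $\mathcal M$, and this is exactly what Proposition~\ref{ws} provides. First I would note that, for $f\in\mathcal M$ and $t>0$, differentiating the kernel in the subordination formula~\eqref{2} under the integral sign (just as in the proof of Proposition~\ref{ws}) shows that $t\mapsto P_tf$ is infinitely differentiable on $(0,\infty)$ with each $\frac{\partial^{j}P_{t}f}{\partial t^{j}}\in\mathcal M$; Proposition~\ref{ws}, combined with $\|P_t\|_{\mathcal M\to\mathcal M}\le1$, then yields
\begin{align*}
\left\|\frac{\partial^{j}P_{t}f}{\partial t^{j}}\right\|_{\infty}&=\left\|\left|\frac{\partial^{j}P_{t}f}{\partial t^{j}}\right|^{2}\right\|_{\infty}^{\frac12}\\
&\leq\frac{2^{j(j+4)/2}}{t^{j}}\left\|P_{t}|f|^{2}\right\|_{\infty}^{\frac12}\leq\frac{2^{j(j+4)/2}}{t^{j}}\|f\|_{\infty}.
\end{align*}
With $j=[\alpha]+1$, this says that $f\mapsto\frac{\partial^{[\alpha]+1}P_{t}f}{\partial t^{[\alpha]+1}}$ has operator norm at most $2^{([\alpha]+1)([\alpha]+5)/2}\,t^{-([\alpha]+1)}$ on $\mathcal M$.

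For (i), I would obtain absolute homogeneity and subadditivity of $\|\cdot\|_{\dot\Lambda_\alpha(\mathcal P)}$ by combining the linearity just recorded with the corresponding properties of $\|\cdot\|_\infty$ and the fact that $\sup_{t>0}$ preserves both; adding the norm $\|\cdot\|_\infty$ keeps these properties, so $\|\cdot\|_{\Lambda_\alpha(\mathcal P)}$ is absolutely homogeneous and subadditive. Definiteness comes for free from the summand $\|\cdot\|_\infty$: if $\|f\|_{\Lambda_\alpha(\mathcal P)}=0$ then $\|f\|_\infty=0$, hence $f=0$. (The summand $\|\cdot\|_\infty$ is genuinely needed, since $\|\cdot\|_{\dot\Lambda_\alpha(\mathcal P)}$ annihilates $ker(A_\infty^{1/2})$, cf.\ Lemma~\ref{gf}.)

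For (ii), I would take a sequence $(f_n)_n\subset\Lambda_\alpha(\mathcal P)$ that is Cauchy for $\|\cdot\|_{\Lambda_\alpha(\mathcal P)}$; it is then Cauchy for $\|\cdot\|_\infty$, so by completeness of $\mathcal M$ there is $f\in\mathcal M$ with $\|f_n-f\|_\infty\to0$, and by the bound above $\frac{\partial^{[\alpha]+1}P_{t}f_n}{\partial t^{[\alpha]+1}}\to\frac{\partial^{[\alpha]+1}P_{t}f}{\partial t^{[\alpha]+1}}$ in $\mathcal M$ for each fixed $t>0$. Given $\varepsilon>0$, pick $N$ with $\|f_n-f_m\|_{\dot\Lambda_\alpha(\mathcal P)}<\varepsilon$ for all $m,n\geq N$; then for any $t>0$ and $n\geq N$, letting $m\to\infty$ in
\begin{align*}
\frac{1}{t^{\alpha-([\alpha]+1)}}\left\|\frac{\partial^{[\alpha]+1}P_{t}(f_n-f_m)}{\partial t^{[\alpha]+1}}\right\|_{\infty}<\varepsilon
\end{align*}
gives $\frac{1}{t^{\alpha-([\alpha]+1)}}\left\|\frac{\partial^{[\alpha]+1}P_{t}(f_n-f)}{\partial t^{[\alpha]+1}}\right\|_{\infty}\leq\varepsilon$, and taking the supremum over $t>0$ yields $\|f_n-f\|_{\dot\Lambda_\alpha(\mathcal P)}\leq\varepsilon$ for $n\geq N$. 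Together with $\|f_n-f\|_\infty\to0$ this gives $\|f_n-f\|_{\Lambda_\alpha(\mathcal P)}\to0$, and writing $f=f_N+(f-f_N)$ shows that $f$ has finite $\Lambda_\alpha(\mathcal P)$-norm, so $f\in\Lambda_\alpha(\mathcal P)$; completeness follows. There is no real obstacle here: the single non-formal ingredient is that $f\mapsto\frac{\partial^{[\alpha]+1}P_{t}f}{\partial t^{[\alpha]+1}}$ is a bounded operator on $\mathcal M$ for each fixed $t>0$, which Proposition~\ref{ws} supplies, and the rest is the usual ``base norm plus semigroup seminorm'' argument; note that the finiteness of $\tau$ plays no role in this statement.
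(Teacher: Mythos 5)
Your proposal is correct and follows essentially the same route as the paper: both reduce everything to the operator bound $\|\partial_t^{[\alpha]+1}P_tf\|_\infty\lesssim_\alpha t^{-([\alpha]+1)}\|f\|_\infty$ coming from Proposition~\ref{ws} together with the contractivity of $P_t$, and then run the standard completeness argument for a base norm plus a seminorm. The only (cosmetic) differences are that you pass to the limit $m\to\infty$ at fixed $t$ and deduce $f\in\Lambda_\alpha(\mathcal P)$ from $f=f_N+(f-f_N)$, whereas the paper chooses $m$ with $\|f-f_m\|_\infty<t^\alpha\varepsilon$ and checks membership by a separate uniform-boundedness estimate.
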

\begin{proof}
\rm{(i)} is obvious.

\rm{(ii)} Let $\{f_{n}\}_{n\geq1}$ be a Cauchy sequence in $\Lambda_{\alpha}(\mathcal{P})$. By the definition, $\{f_{n}\}_{n\geq1}$ is automatically a Cauchy sequence in $\mathcal{M}$. So there exists a unique $f\in\mathcal{M}$ such that $f_{n}\rightarrow f$ in $\mathcal{M}$. This implies that, for $t>0$ fixed, we can find an integer $m$ sufficiently large such that $\|f-f_{m}\|_{\infty}<t^{\alpha}$. By Proposition \ref{ws} and (\ref{5151}), we get
\begin{align*}
  \frac{1}{t^{\alpha-([\alpha]+1)}}\left\|\frac{\partial^{[\alpha]+1}P_{t}f}{\partial t^{[\alpha]+1}}\right\|_{\infty}
\leq&\frac{1}{t^{\alpha-([\alpha]+1)}}\left\|\frac{\partial^{[\alpha]+1}P_{t}}{\partial t^{[\alpha]+1}}(f-f_{m})\right\|_{\infty}+\frac{1}{t^{\alpha-([\alpha]+1)}}\left\|\frac{\partial^{[\alpha]+1}P_{t}f_{m}}{\partial t^{[\alpha]+1}}\right\|_{\infty}
\\=&\frac{1}{t^{\alpha-([\alpha]+1)}}\left\|\left|\frac{\partial^{[\alpha]+1}P_{t}}{\partial t^{[\alpha]+1}}(f-f_{m})\right|^{2}\right\|_{\infty}^{\frac{1}{2}}+\frac{1}{t^{\alpha-([\alpha]+1)}}\left\|\frac{\partial^{[\alpha]+1}P_{t}f_{m}}{\partial t^{[\alpha]+1}}\right\|_{\infty}
\\ \lesssim&_{\alpha}\frac{1}{t^{\alpha-([\alpha]+1)}}\left\|\frac{P_{t}}{ t^{2([\alpha]+1)}}\left|f-f_{m}\right|^{2}\right\|_{\infty}^{\frac{1}{2}}+\|f_{m}\|_{\Lambda_{\alpha}(\mathcal{P})}
\\ =&\frac{1}{t^{\alpha}}\left\|P_{t} \left|f-f_{m}\right|^{2}\right\|_{\infty}^{\frac{1}{2}}+\|f_{m}\|_{\Lambda_{\alpha}(\mathcal{P})}
\\ \leq&\frac{1}{t^{\alpha}}\left\| f-f_{m}\right\|_{\infty}+\|f_{m}\|_{\Lambda_{\alpha}(\mathcal{P})}\leq1+\sup_{n\geq1}\|f_{n}\|_{\Lambda_{\alpha}(\mathcal{P})}<\infty.
\end{align*}
In the last inequality, we have used the fact that the norms of $f_n$'s are uniformly bounded which is a trivial consequence of a Cauchy sequence.
This yields $f\in \Lambda_{\alpha}(\mathcal{P})$.

Now we show that the sequence $\{f_{n}\}_{n\geq1}$ converges to $f$ in $\Lambda_{\alpha}(\mathcal{P})$. Let $\epsilon, t>0$. Note that $\{f_{n}\}_{n\geq1}$ is a Cauchy sequence in $\Lambda_{\alpha}(\mathcal{P})$, so there exists a large integer $N_{1}$ such that $\|f_{n_1}-f_{n_2}\|_{\Lambda_{\alpha}(\mathcal{P})}<\epsilon$ for every $n_1,n_2\geq N_{1}$. On the other hand, since $f_{n}\rightarrow f$ in $\mathcal{M}$, we can not only take an integer $N_{2}$ sufficiently large such that $\|f-f_{n}\|_{\infty}<\epsilon$ for every $n\geq N_{2}$, but also another integer $m$ big enough, for instance $m\geq\max\{N_{1},N_{2}\}$, such that $\|f-f_{m}\|_{\infty}< t^{\alpha}\epsilon$.
Therefore, together with Proposition \ref{ws}, for $n\geq\max\{N_{1},N_{2}\}$, we have
\begin{align*}
 \frac{1}{t^{\alpha-([\alpha]+1)}}\left\|\frac{\partial^{[\alpha]+1}P_{t}}{\partial t^{[\alpha]+1}}(f-f_{n})\right\|_{\infty}
\leq&\frac{1}{t^{\alpha-([\alpha]+1)}}\left\|\frac{\partial^{[\alpha]+1}P_{t}}{\partial t^{[\alpha]+1}}(f-f_{m})\right\|_{\infty}
\\&+\frac{1}{t^{\alpha-([\alpha]+1)}}\left\|\frac{\partial^{[\alpha]+1}P_{t}}{\partial t^{[\alpha]+1}}(f_{m}-f_{n})\right\|_{\infty}
\\ \lesssim&_{\alpha}\frac{1}{t^{\alpha}}\left\| f-f_{m}\right\|_{\infty}+\|f_{n}-f_{m}\|_{\Lambda_{\alpha}(\mathcal{P})}\leq2\epsilon,
\end{align*}
which implies that $\|f_n-f\|_{\Lambda_{\alpha}(\mathcal{P})}\lesssim\epsilon$ by the arbitrariness of $t>0$.
\end{proof}
The following characterization of the seminorm $\|\cdot\|_{\dot{\Lambda}_{\alpha}(\mathcal{P})}$ will play an important role in the present paper.

\begin{lemma}\label{190}
Let $(P_{t})_{t\geq0}$ be the Poisson semigroup subordinated to a Markov semigroup $(T_{t})_{t\geq0}$. For any $f\in \mathcal{M}$ and $\alpha>0$, we have
\begin{itemize}
  \item [\rm({i})] $\sup_{t>0}\frac{1}{t^{\alpha-([ \alpha]+1)}}\left\|\frac{\partial^{[ \alpha]+1}P_{t}}{\partial t^{[\alpha]+1}}(f-P_{t}f)\right\|_{\infty}\leq2\|f\|_{\dot{\Lambda}_{\alpha}(\mathcal{P})};$
  \item [\rm({ii})]
 $\|f\|_{\dot{\Lambda}_{\alpha}(\mathcal{P})}\leq\frac{1}{1-2^{\alpha-([ \alpha]+1)}}\sup_{t>0}\frac{1}{t^{\alpha-([ \alpha]+1)}}\left\|\frac{\partial^{[ \alpha]+1}P_{t}}{\partial t^{[\alpha]+1}}(f-P_{t}f)\right\|_{\infty}.$
\end{itemize}
\end{lemma}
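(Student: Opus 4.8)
The plan is to pass to the $\mathcal{M}$-valued function of one variable $\phi(t):=\dfrac{\partial^{[\alpha]+1}P_{t}f}{\partial t^{[\alpha]+1}}$, $t>0$. Write $m:=[\alpha]+1$ and note that $m-\alpha\in(0,1]$, hence $2^{\alpha-m}\in[1/2,1)$. For $f\in\mathcal{M}$ the map $s\mapsto P_{s}f$ is smooth on $(0,\infty)$ (this is already used in Proposition \ref{ws}, and follows by differentiating the subordination formula \eqref{2}), and the operator $\dfrac{\partial^{m}P_{t}}{\partial t^{m}}$, frozen at the given $t$, applied to the element $f-P_{t}f$ gives, via $P_{t}P_{t}=P_{2t}$,
\[
\frac{\partial^{m}P_{t}}{\partial t^{m}}(f-P_{t}f)=\phi(t)-\phi(2t).
\]
In this notation $\|f\|_{\dot{\Lambda}_{\alpha}(\mathcal{P})}=\sup_{t>0}t^{m-\alpha}\|\phi(t)\|_{\infty}$, while the quantity appearing on the right-hand side of both (i) and (ii) is $Q(f):=\sup_{t>0}t^{m-\alpha}\|\phi(t)-\phi(2t)\|_{\infty}$. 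With this reformulation both inequalities reduce to elementary facts about $\phi$; the only point needing care is pinning down this reading of $\frac{\partial^{m}P_{t}}{\partial t^{m}}(f-P_{t}f)$, which the displayed identity settles (and which also explains the precise shape of the constants).

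For (i), I would simply use the triangle inequality: $t^{m-\alpha}\|\phi(t)-\phi(2t)\|_{\infty}\le t^{m-\alpha}\|\phi(t)\|_{\infty}+t^{m-\alpha}\|\phi(2t)\|_{\infty}$. The first summand is $\le\|f\|_{\dot{\Lambda}_{\alpha}(\mathcal{P})}$ by definition; for the second, substituting $s=2t$ turns it into $2^{\alpha-m}s^{m-\alpha}\|\phi(s)\|_{\infty}\le2^{\alpha-m}\|f\|_{\dot{\Lambda}_{\alpha}(\mathcal{P})}\le\|f\|_{\dot{\Lambda}_{\alpha}(\mathcal{P})}$ since $\alpha-m<0$. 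Taking the supremum over $t>0$ gives $Q(f)\le(1+2^{\alpha-m})\|f\|_{\dot{\Lambda}_{\alpha}(\mathcal{P})}\le2\|f\|_{\dot{\Lambda}_{\alpha}(\mathcal{P})}$.

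For (ii), the idea is a dyadic telescoping plus a tail estimate. Fix $t>0$; for every $n\ge1$,
\[
\phi(t)=\sum_{k=0}^{n-1}\bigl(\phi(2^{k}t)-\phi(2^{k+1}t)\bigr)+\phi(2^{n}t),
\]
and each difference obeys $\|\phi(2^{k}t)-\phi(2^{k+1}t)\|_{\infty}\le(2^{k}t)^{\alpha-m}Q(f)=2^{k(\alpha-m)}t^{\alpha-m}Q(f)$ by the very definition of $Q(f)$ applied at the parameter $2^{k}t$. Hence the finite sum is bounded by $t^{\alpha-m}Q(f)\sum_{k\ge0}2^{k(\alpha-m)}=\dfrac{t^{\alpha-m}}{1-2^{\alpha-m}}Q(f)$, a convergent geometric series since $\alpha-m<0$. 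To dispose of the remainder $\phi(2^{n}t)$ I would invoke Proposition \ref{ws} with $j=m$: together with $\|P_{s}|f|^{2}\|_{\infty}\le\|f\|_{\infty}^{2}$ it yields $\|\phi(s)\|_{\infty}\le2^{m(m+4)/2}s^{-m}\|f\|_{\infty}\to0$ as $s\to\infty$ (since $m\ge1$). Letting $n\to\infty$ gives $\|\phi(t)\|_{\infty}\le\dfrac{t^{\alpha-m}}{1-2^{\alpha-m}}Q(f)$, i.e. $t^{m-\alpha}\|\phi(t)\|_{\infty}\le\dfrac{1}{1-2^{\alpha-m}}Q(f)$, and a supremum over $t>0$ finishes the proof.

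There is no genuine obstacle here: once the identity $\frac{\partial^{m}P_{t}}{\partial t^{m}}(f-P_{t}f)=\phi(t)-\phi(2t)$ is in place, (i) is a two-term triangle inequality and (ii) is a geometric series plus the decay $\phi(2^{n}t)\to0$, and the latter is exactly what Proposition \ref{ws} provides. The mild subtleties are purely notational (the meaning of the differential operator symbol) and the need to justify letting $n\to\infty$, both of which are immediate.
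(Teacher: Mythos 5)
Your proof is correct. Part (i) is essentially identical to the paper's argument: the same two-term triangle inequality together with the semigroup identity $\frac{\partial^{m}P_{t}}{\partial t^{m}}P_{t}f=\left.\frac{\partial^{m}P_{u}f}{\partial u^{m}}\right|_{u=2t}$, giving the constant $1+2^{\alpha-m}\leq 2$. Part (ii), however, takes a genuinely different route. The paper writes $\frac{1}{t^{\alpha-m}}\|\phi(t)\|_{\infty}\leq Q(f)+2^{\alpha-m}\|f\|_{\dot{\Lambda}_{\alpha}(\mathcal{P})}$, takes the supremum over $t$, and absorbs the term $2^{\alpha-m}\|f\|_{\dot{\Lambda}_{\alpha}(\mathcal{P})}$ into the left-hand side; you instead telescope dyadically, $\phi(t)=\sum_{k=0}^{n-1}(\phi(2^{k}t)-\phi(2^{k+1}t))+\phi(2^{n}t)$, sum the geometric series, and kill the tail with the decay $\|\phi(s)\|_{\infty}\lesssim s^{-m}\|f\|_{\infty}$ from Proposition \ref{ws}. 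Both yield the same constant $\frac{1}{1-2^{\alpha-m}}$. Your version has the minor advantage of not requiring the a priori finiteness of $\|f\|_{\dot{\Lambda}_{\alpha}(\mathcal{P})}$, which the absorption step implicitly presupposes (and which is not obvious for an arbitrary $f\in\mathcal{M}$, since the quantity $t^{m-\alpha}\|\phi(t)\|_{\infty}$ is only controlled by $t^{-\alpha}\|f\|_{\infty}$ as $t\to 0^{+}$); the price is the extra (but immediate) tail estimate. Either argument is acceptable.
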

\begin{proof}
To prove \rm({i}), we note that
\begin{align*}
  \frac{\partial P_{s}}{\partial s}P_{t}=\left.\frac{\partial P_{u}}{\partial u}\right|_{u=t+s}
\end{align*}
holds for any $s,t>0$. Then for a fixed $t>0$, by the triangle inequality, we have
\begin{align*}
  &\frac{1}{t^{\alpha-([ \alpha]+1)}}\left\|\frac{\partial^{[ \alpha]+1}P_{t}}{\partial t^{[\alpha]+1}}(f-P_{t}f)\right\|_{\infty}
  \\ \leq&\frac{1}{t^{\alpha-([ \alpha]+1)}}\left\|\frac{\partial^{[ \alpha]+1}P_{t}f}{\partial t^{[\alpha]+1}}\right\|_{\infty}
  +\frac{1}{t^{\alpha-([ \alpha]+1)}}\left\|\frac{\partial^{[ \alpha]+1}P_{t}}{\partial t^{[\alpha]+1}}P_{t}f\right\|_{\infty}
  \\ =&\frac{1}{t^{\alpha-([ \alpha]+1)}}\left\|\frac{\partial^{[ \alpha]+1}P_{t}f}{\partial t^{[\alpha]+1}}\right\|_{\infty}
 +2^{\alpha-([ \alpha]+1)}\frac{1}{(2t)^{\alpha-([ \alpha]+1)}}\left\|\left.\frac{\partial^{[ \alpha]+1}P_{s}f}{\partial s^{[\alpha]+1}}\right|_{s=2t}\right\|_{\infty}\\
 \leq& \|f\|_{\dot{\Lambda}_{\alpha}(\mathcal{P})}+2^{\alpha-([ \alpha]+1)}\|f\|_{\dot{\Lambda}_{\alpha}(\mathcal{P})}.
\end{align*}
Taking the supremum over $t>0$ on the left-hand side, we get \rm(i).

To prove \rm({ii}), by the triangle inequality, we obtain for a fixed $t>0$,
\begin{align*}
  \frac{1}{t^{\alpha-([ \alpha]+1)}}\left\|\frac{\partial^{[ \alpha]+1}P_{t}f}{\partial t^{[\alpha]+1}}\right\|_{\infty}
 \leq&\frac{1}{t^{\alpha-([ \alpha]+1)}}\left\|\frac{\partial^{[ \alpha]+1}P_{t}}{\partial t^{[\alpha]+1}}(f-P_{t}f)\right\|_{\infty}
  +\frac{1}{t^{\alpha-([ \alpha]+1)}}\left\|\frac{\partial^{[ \alpha]+1}P_{t}}{\partial t^{[\alpha]+1}}P_{t}f\right\|_{\infty}
\\ \leq&\sup_{t>0}\frac{1}{t^{\alpha-([ \alpha]+1)}}\left\|\frac{\partial^{[ \alpha]+1}P_{t}}{\partial t^{[\alpha]+1}}(f-P_{t}f)\right\|_{\infty}+2^{\alpha-([ \alpha]+1)}\|f\|_{\dot{\Lambda}_{\alpha}(\mathcal{P})}.
\end{align*}
Taking the supremum over $t>0$ on the left side and then using the condition that $2^{\alpha-([ \alpha]+1)}<1$ trivially holds for $\alpha>0$, we deduce
$$
  \|f\|_{\dot{\Lambda}_{\alpha}(\mathcal{P})}\leq\frac{1}{1-2^{\alpha-([ \alpha]+1)}}\sup_{t>0}\frac{1}{t^{\alpha-([ \alpha]+1)}}\left\|\frac{\partial^{[ \alpha]+1}P_{t}}{\partial t^{[\alpha]+1}}(f-P_{t}f)\right\|_{\infty}.
$$
\end{proof}
As a matter of fact, just like the classical case (see e.g. \cite[Chapter V,\,Lemma 5]{s70}), the index $[\alpha]+1$ in (\ref{e33}) can be replaced by any integer greater than $\alpha$.
Let $k$ be an integer such that $k>\alpha>0$, we define $\Lambda_{\alpha,k}({\mathcal P})$ as
  $$\Lambda_{\alpha,k}({\mathcal P})=\left\{f\in\mathcal{M}:\,\,  \left\|f\right\|_{\Lambda_{\alpha,k}({\mathcal P})}<\infty\right\},$$
with
 $$\left\|f\right\|_{\Lambda_{\alpha,k}({\mathcal P})}=\|f\|_{\infty}+\left\|f\right\|_{\dot{\Lambda}_{\alpha,k}({\mathcal P})},\quad \left\|f\right\|_{\dot{\Lambda}_{\alpha,k}({\mathcal P})}=\sup_{t>0}\frac{1}{t^{ \alpha-k}}\left\|\frac{\partial^{k}P_{t}f}{\partial t^{k}}\right\|_{\infty}.$$
Similarly, it is not difficult to get that $\|\cdot\|_{\Lambda_{\alpha,k}({\mathcal P})}$ is a norm and the space $\Lambda_{\alpha,k}(\mathcal{P})$ is complete with respect to the norm $\left\|\cdot\right\|_{\Lambda_{\alpha,k}({\mathcal P})}$.

The following proposition tells that, for $\alpha>0$ fixed, the spaces $\Lambda_{\alpha,k}({\mathcal P})$ are all the same when $k>\alpha$.
\begin{proposition}\label{20}
Let $\alpha>0$ and $k$ be an integer such that $k\geq[\alpha]+2$. Then, for $f\in \mathcal{M}$, we have
  \begin{align}\label{h1}
  \left\|f\right\|_{\Lambda_{\alpha}({\mathcal P})}\leq\max\{1,\frac{1}{(([\alpha]+2)-(\alpha+1))\cdots((k-1)-(\alpha+1))(k-(\alpha+1))}\}\left\|f\right\|_{\Lambda_{\alpha,k}({\mathcal P})}
 \end{align}
 and
 \begin{align}\label{h2}
  \left\|f\right\|_{\Lambda_{\alpha,k}({\mathcal P})}\lesssim2^{(k-[\alpha]-1)(\frac{k+[\alpha]+2}{2}-\alpha)}\left\|f\right\|_{\Lambda_{\alpha}({\mathcal P})}.
 \end{align}
\end{proposition}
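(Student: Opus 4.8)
The plan is to prove the two inequalities \eqref{h1} and \eqref{h2} separately, in both cases reducing everything to the homogeneous seminorms since the $L^\infty$-part of the norm appears trivially on both sides. Throughout I will write $m=[\alpha]+1$ for brevity, so that the target index in $\|\cdot\|_{\dot\Lambda_\alpha(\mathcal P)}$ is $m$ and the hypothesis is $k\geq m+1$.

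For \eqref{h1} (passing from a large index $k$ down to $m$), I would exploit the fact that $P_t$ is a semigroup, so that $\partial_t^k P_t f = \partial_t^{k-m}\big(\partial_s^m P_s f\big)\big|_{s=t}$, and integrate back up. More precisely, fix $t>0$ and write $g(s)=\partial_s^m P_s f$; then $g^{(k-m)}(s)$ is controlled by $\|f\|_{\dot\Lambda_{\alpha,k}(\mathcal P)}\, s^{\alpha-k}$. Integrating $k-m$ times from $t$ to $\infty$ — and using that $g^{(j)}(s)\to 0$ as $s\to\infty$ for each $0\le j<k-m$, which follows from the splitting Lemma \ref{gf} applied on $\mathcal M^\circ$ together with Proposition \ref{ws} (the component in $\ker(A_\infty^{1/2})$ is killed by any derivative of $P_t$) — one recovers $g(t)=\partial_t^m P_t f$ as an iterated integral of $g^{(k-m)}$. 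Each integration of $s^{\alpha-k}$ produces a factor like $1/\big((k-1-\alpha)(k-2-\alpha)\cdots\big)$, and since every exponent $\alpha-j$ with $m+1\le j\le k$ is negative (as $j\ge m+1=[\alpha]+2>\alpha+1$), the integrals converge and one lands exactly on the constant displayed in \eqref{h1}, with the $\max\{1,\cdot\}$ absorbing the case where that product exceeds $1$. One must be a little careful that the tail-vanishing holds in the $L^\infty$ sense and that the interchange of limits is legitimate; this is the only subtle point of this direction and it is handled by the a priori bound $\|\partial_s^m P_s f\|_\infty\lesssim_\alpha s^{-m}\|f\|_\infty$ from Proposition \ref{ws}, giving uniform integrability of the relevant tails.

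For \eqref{h2} (passing from index $m$ up to $k$), the natural device is iteration of the semigroup property together with Proposition \ref{ws}: writing $k-m$ extra derivatives as $\partial_t^{k} P_t f = \partial_t^{k-m}P_{t/2}\cdot \partial_s^m P_s f\big|_{s=t/2}$ composed appropriately, one splits $P_t=P_{t/2}\circ P_{t/2}$ (or more finely into $2^{\,k-m}$ equal pieces), places $m$ of the derivatives on the innermost factor and distributes the remaining $k-m$ one at a time over the outer factors, each time invoking the estimate $\big\|\partial_r P_r h\big\|_\infty\lesssim r^{-1}\|h\|_\infty$ (the $j=1$ case of Proposition \ref{ws} in its operator-norm form). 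Tracking the numerology: each of the $k-m$ steps contributes a factor controlled by the reciprocal of a dyadic fraction of $t$, and collecting the powers of $2$ from the dyadic splitting yields precisely the exponent $(k-[\alpha]-1)\big(\tfrac{k+[\alpha]+2}{2}-\alpha\big)$ appearing in \eqref{h2}; the remaining $t$-powers reassemble into $t^{\alpha-k}$, matched against $t^{\alpha-m}$ from $\|f\|_{\dot\Lambda_\alpha(\mathcal P)}$ with a compensating $t^{m-k}$ that is exactly supplied by the $k-m$ factors of $r^{-1}$. This direction is essentially bookkeeping once the splitting scheme is fixed.

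The main obstacle I expect is the tail-vanishing argument in \eqref{h1}: justifying that $\partial_s^j P_s f \to 0$ in $\mathcal M$ (weak-$*$, hence enough for the $L^\infty$-norm estimates after testing) as $s\to\infty$ for $1\le j\le k-m$, uniformly enough to integrate the differential identity back up $k-m$ times. This is where the finiteness of $\tau$ and the ergodic splitting $\mathcal M=\mathcal M^\circ\oplus\ker(A_\infty^{1/2})$ are genuinely used: on $\ker(A_\infty^{1/2})$ every $t$-derivative of $P_t$ vanishes identically, while on $\mathcal M^\circ$ one has $P_s f\to 0$ and then Proposition \ref{ws} upgrades this to decay of the derivatives. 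Everything else — the constants, the dyadic splitting in \eqref{h2}, the reduction to homogeneous seminorms — is routine, and the completeness and norm assertions for $\Lambda_{\alpha,k}(\mathcal P)$ alluded to just before the proposition are proved verbatim as in Proposition \ref{lael} using Proposition \ref{ws}.
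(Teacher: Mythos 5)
Your proposal is correct and follows essentially the same route as the paper: for \eqref{h1} one iterates the one-step inequality $\|f\|_{\dot{\Lambda}_{\alpha,\ell-1}(\mathcal P)}\leq\frac{1}{\ell-(\alpha+1)}\|f\|_{\dot{\Lambda}_{\alpha,\ell}(\mathcal P)}$ obtained by integrating $\partial_s^{\ell}P_sf$ from $t$ to $\infty$ (the paper justifies the vanishing at infinity via $-P_{t/2}f=\int_{t/2}^{\infty}\partial_uP_uf\,du$ on $\mathcal M^{\circ}$ rather than by the direct decay from Proposition \ref{ws}, but both work), and for \eqref{h2} one peels off one derivative at a time at scale $t/2$ using the $j=1$ case of Proposition \ref{ws}, accumulating the factors $2^{\ell-\alpha}$ into the stated exponent.
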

\begin{proof}
We first deal with \eqref{h1} and divide the proof into two steps.

\textbf{Step 1.} We will show that it suffices to estimate the following inequality on $\mathcal M^{\circ}$: for $g\in\mathcal M^{\circ}$,
\begin{align}\label{h3}
  \left\|g\right\|_{\dot{\Lambda}_{\alpha}({\mathcal P})}\leq\frac{1}{(([\alpha]+2)-(\alpha+1))\cdots((k-1)-(\alpha+1))(k-(\alpha+1))}\left\|g\right\|_{\dot{\Lambda}_{\alpha,k}({\mathcal P})}.
\end{align}
Let $f\in \mathcal M$. By Lemma \ref{gf}, we can split $f$ as
\begin{align*}
  f=f_{0}+f_{1},\;\;\;f_{0}\in \mathcal M^{\circ},\;f_{1}\in ker(A^{{\frac{1}{2}}}_{\infty}).
\end{align*}
By definition,  $f_{1}\in ker(A^{{\frac{1}{2}}}_{\infty})$ implies $P_{t}f_{1}=f_{1}$ for any $t>0$. Thus, we have
\begin{align}\label{h4}
\left\|f\right\|_{\dot{\Lambda}_{\alpha}({\mathcal P})}&=\sup_{t>0}\frac{1}{t^{\alpha-([\alpha]+1)}}\left\|\frac{\partial^{[\alpha]+1}P_{t}}{\partial t^{[\alpha]+1}}(f_{0}+f_{1})\right\|_{\infty}\\
\nonumber& =\sup_{t>0}\frac{1}{t^{\alpha-([\alpha]+1)}}\left\|\frac{\partial^{[\alpha]+1}P_{t}f_{0}}{\partial t^{[\alpha]+1}}\right\|_{\infty}=\left\|f_{0}\right\|_{\dot{\Lambda}_{\alpha}({\mathcal P})}.
\end{align}
In a similar way, we  obtain
\begin{align}\label{h5}
  \left\|f\right\|_{\dot{\Lambda}_{\alpha,k}({\mathcal P})}=\left\|f_{0}\right\|_{\dot{\Lambda}_{\alpha,k}({\mathcal P})}.
\end{align}
Thus, applying $\eqref{h3}$ to $f_0\in\mathcal M^{\circ}$, we may get the desired \eqref{h1},
\begin{align*}
\left\|f\right\|_{\Lambda_{\alpha}({\mathcal P})}=& \|f\|_{\infty}+\left\|f\right\|_{\dot{\Lambda}_{\alpha}({\mathcal P})}
=\|f\|_{\infty}+\left\|f_{0}\right\|_{\dot{\Lambda}_{\alpha}({\mathcal P})}
  \\ \leq&\|f\|_{\infty}+\frac{1}{(([\alpha]+2)-(\alpha+1))\cdots((k-1)-(\alpha+1))(k-(\alpha+1))}\left\|f_{0}\right\|_{\dot{\Lambda}_{\alpha,k}({\mathcal P})}
\\ =&\|f\|_{\infty}+\frac{1}{(([\alpha]+2)-(\alpha+1))\cdots((k-1)-(\alpha+1))(k-(\alpha+1))}\left\|f\right\|_{\dot{\Lambda}_{\alpha,k}({\mathcal P})}
\\ \leq&\max\{1,\frac{1}{(([\alpha]+2)-(\alpha+1))\cdots((k-1)-(\alpha+1))(k-(\alpha+1))}\}\left\|f\right\|_{\Lambda_{\alpha,k}({\mathcal P})}.
\end{align*}

\textbf{Step 2.} We now show (\ref{h3}). Let $f\in\mathcal M^\circ$. By the definition of $\mathcal M^\circ$, for a fixed $t>0$, one has
\begin{align*}
 -P_{t}f=\int^{\infty}_{t}\frac{\partial P_{s}f}{\partial s}ds.
\end{align*}
Let $\ell\geq [\alpha]+2$.
Using the identity
$\frac{\partial^{\ell-1}P_{t}f}{\partial t^{\ell-1}}=P_{\frac{t}{2}}\left.\frac{\partial^{\ell-1}P_{t_{1}}f}{\partial t_{1}^{\ell-1}}\right|_{t_{1}=\frac{t}{2}},$ we may compute
\begin{align*}
\left\|\frac{\partial^{\ell-1}P_{t}f}{\partial t^{\ell-1}}\right\|_{\infty}
=\left\|P_{\frac{t}{2}}\left.\frac{\partial^{\ell-1}P_{t_{1}}f}{\partial t_{1}^{\ell-1}}\right|_{t_{1}=\frac{t}{2}}\right\|_{\infty}
=\left\|\left.\frac{\partial^{\ell-1}P_{t_{1}}}{\partial t_{1}^{\ell-1}}\right|_{t_{1}=\frac{t}{2}}\int^{\infty}_{\frac{t}{2}}\frac{\partial P_{u}f}{\partial u}du\right\|_{\infty}.
\end{align*}
 By the elementary identity $\left.\frac{\partial^{\ell-1}P_{t_{1}}}{\partial t_{1}^{\ell-1}}\right|_{t_{1}=\frac{t}{2}}\frac{\partial P_{u}f}{\partial u}=\frac{\partial^{\ell} P_{u+\frac{t}{2}}f}{\partial u^{\ell}}$ and taking $s=u+\frac{t}{2}$, we have
\begin{align*}
\frac{1}{t^{\alpha-({\ell}-1)}}\left\|\frac{\partial^{\ell-1}P_{t}f}{\partial t^{\ell-1}}\right\|_{\infty}
=&\frac{1}{t^{\alpha-(\ell-1)}}\left\|\int^{\infty}_{\frac{t}{2}}\frac{\partial^{\ell} P_{u+\frac{t}{2}}f}{\partial u^{\ell}}du\right\|_{\infty}
\\=&\frac{1}{t^{\alpha-(\ell-1)}}\left\|\int^{\infty}_{t}\frac{\partial^{\ell}P_{s}f}{\partial s^{\ell}}ds\right\|_{\infty}
\\ \leq&\frac{1}{t^{\alpha-(\ell-1)}}\int^{\infty}_{t}\left\|\frac{\partial^{\ell}P_{s}f}{\partial s^{\ell}}\right\|_{\infty}ds
\\=&\frac{1}{t^{\alpha-(\ell-1)}}\int^{\infty}_{t}\frac{s^{\alpha-\ell}}{s^{\alpha-\ell}}\left\|\frac{\partial^{\ell}P_{s}f}{\partial s^{\ell}}\right\|_{\infty}ds
\\ \leq&\sup_{s>0}\left\{\frac{1}{s^{\alpha-\ell}}\left\|\frac{\partial^{l}P_{s}f}{\partial s^{\ell}}\right\|_{\infty}\right\}\frac{1}{t^{\alpha-(\ell-1)}}\int^{\infty}_{t}s^{\alpha-\ell}ds
\\=&\frac{1}{\ell-(\alpha+1)}\left\|f\right\|_{\dot{\Lambda}_{\alpha,\ell}({\mathcal P})}.
\end{align*}
Taking supremum over $t>0$ on the left-hand side, we get
\begin{align}\label{for iteration0}
  \left\|f\right\|_{\dot{\Lambda}_{\alpha,\ell-1}({\mathcal P})}\leq\frac{1}{\ell-(\alpha+1)}\left\|f\right\|_{\dot{\Lambda}_{\alpha,\ell}({\mathcal P})}.
\end{align}
By iteration, that is, applying \eqref{for iteration0} to $\ell=[\alpha]+2,\dotsm,k$,
we deduce the desired estimate (\ref{h3}).

\bigskip

We now prove \eqref{h2}. Let $\ell\geq[\alpha]+2$. For one $t>0$ fixed, we use Proposition \ref{ws} and obtain
\begin{align}\label{21}
\frac{1}{t^{\alpha-\ell}}\left\|\frac{\partial^{\ell}P_{t}f}{\partial t^{\ell}}\right\|_{\infty}
=&\frac{1}{t^{\alpha-\ell}}\left\|\left|\left.\frac{\partial P_{v}}{\partial v}\right|_{v=\frac{t}{2}}\left.\frac{\partial^{\ell-1}P_{s}f}{\partial s^{\ell-1}}\right|_{s=\frac{t}{2}}\right|^{2}\right\|^{\frac{1}{2}}_{\infty}
\\ \nonumber \lesssim&\frac{2}{t^{\alpha-(\ell-1)}}\left\|P_{\frac{t}{2}}\left|\left.\frac{\partial^{\ell-1}P_{s}f}{\partial s^{\ell-1}}\right|_{s=\frac{t}{2}}\right|^{2}\right\|^{\frac{1}{2}}_{\infty}
\\ \nonumber \leq&2^{\ell-\alpha}\sup_{t>0}\frac{1}{t^{\alpha-(\ell-1)}}\left\|P_{t}\left|\frac{\partial^{\ell-1}P_{t}f}{\partial t^{\ell-1}}\right|^{2}\right\|^{\frac{1}{2}}_{\infty}
\\ \nonumber \leq&2^{\ell-\alpha}\sup_{t>0}\frac{1}{t^{\alpha-(\ell-1)}}\left\|\frac{\partial^{\ell-1}P_{t}f}{\partial t^{\ell-1}}\right\|_{\infty}.
\end{align}
Taking the supremum over $t>0$ on the left-hand side, we have
\begin{align*}
 \left\|f\right\|_{\dot{\Lambda}_{\alpha,\ell}({\mathcal P})}\lesssim&2^{\ell-\alpha} \left\|f\right\|_{\dot{\Lambda}_{\alpha,\ell-1}({\mathcal P})},
\end{align*}
which implies
\begin{align}\label{dnl}
 \left\|f\right\|_{\Lambda_{\alpha,\ell}({\mathcal P})}=&\|f\|_{\infty}+\left\|f\right\|_{\dot{\Lambda}_{\alpha,\ell}({\mathcal P})}
\lesssim\|f\|_{\infty}+2^{\ell-\alpha}\left\|f\right\|_{\dot{\Lambda}_{\alpha,\ell-1}({\mathcal P})}
 \leq2^{\ell-\alpha}\left\|f\right\|_{\Lambda_{\alpha,\ell-1}({\mathcal P})}.
\end{align}
Applying (\ref{dnl}) to $\ell=k,\dotsm,[\alpha]+2$, we obtain (\ref{h2}) by iteration.

\end{proof}

\section{Campanato Spaces: Proof of Theorem \ref{e25}}
\hskip\parindent
In this section, we introduce the Campanato spaces associated with the quantum Poisson semigroup $(P_{t})_{t\geq0}$ and show the surprising coincidence of the column Campanato space and the row Campanato space.

Let $(T_{t})_{t\geq0}$ be a Markov semigroup acting on a finite von Neumann algebra $\mathcal{M}$ and $(P_{t})_{t\geq0}$ be the subordinated Poisson semigroup. For all $\alpha>0$, the column Campanato space is given by
 \begin{align*}
   \mathcal{L}_{\alpha}^{c}(\mathcal P)=\left\{f\in\mathcal{M}: \,\, \left\|f\right\|_{\mathcal{L}_{\alpha}^{c}(\mathcal P)}<\infty\right\},
 \end{align*}
where
\begin{align}\label{e34}
    \left\|f\right\|_{\mathcal L_{\alpha}^{c}(\mathcal P)}=\|f\|_{\infty}+\left\|f\right\|_{\dot{\mathcal L}_{\alpha}^{c}(\mathcal P)},\quad \left\|f\right\|_{\dot{\mathcal L}_{\alpha}^{c}(\mathcal P)}=\sup_{t>0}\frac{1}{t^{\alpha}}
   \left\|P_{t}|(I-P_{t})^{[\alpha]+1}f|^{2}\right\|_{\infty}^{\frac{1}{2}}.
\end{align}
Argued as in the case of $\alpha=0$ (see e.g. \rm{\cite[Proposition 2.1]{mj12}}), we know that $\left\|\cdot\right\|_{\dot{\mathcal L}_{\alpha}^{c}(\mathcal P)}$ is a semi-norm, hence $\left\|\cdot\right\|_{\mathcal L_{\alpha}^{c}(\mathcal P)}$ is a norm. Similarly, one defines the row space $  \mathcal{L}_{\alpha}^{r}(\mathcal P)$ with norm $\left\|f\right\|_{\mathcal L_{\alpha}^{r}(\mathcal P)}=\left\|f^{*}\right\|_{\mathcal L_{\alpha}^{c}(\mathcal P)}$. Then the mixture space $  \mathcal{L}_{\alpha}^{cr}(\mathcal P)$ is defined as the intersection $  \mathcal{L}_{\alpha}^{c}(\mathcal P)\cap \mathcal{L}_{\alpha}^{r}(\mathcal P)$.

\begin{proposition}\label{5111}
Suppose $\alpha>0$. Then,
the space $\mathcal L_{\alpha}^{\dag}(\mathcal P)$ is complete with respect to the norm $\|\cdot\|_{\mathcal L_{\alpha}^{\dag}(\mathcal P)}$, where $\dag=\{c,r,cr\}$.
\end{proposition}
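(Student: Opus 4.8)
The plan is to mimic the completeness argument already carried out for the Lipschitz space $\Lambda_\alpha(\mathcal P)$ in Proposition \ref{lael}(ii), since the two norms are built in exactly the same fashion: an $L_\infty$-term plus a supremum over $t>0$ of a quantity controlled by $P_t|\cdot|^2$. It suffices to treat the column case $\dag=c$; the row case follows by applying the column case to $f^*$ (the map $f\mapsto f^*$ is an isometry between the two spaces by definition), and the mixture case $\dag=cr$ follows because the intersection of two complete spaces, equipped with the intersection norm $\max\{\|\cdot\|_c,\|\cdot\|_r\}$, is again complete.

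First I would take a Cauchy sequence $\{f_n\}_{n\geq1}$ in $\mathcal L_\alpha^c(\mathcal P)$. Since $\|\cdot\|_\infty\leq\|\cdot\|_{\mathcal L_\alpha^c(\mathcal P)}$, it is Cauchy in $\mathcal M$, hence converges in operator norm to some $f\in\mathcal M$; in particular $\sup_n\|f_n\|_{\mathcal L_\alpha^c(\mathcal P)}=:C<\infty$. The key point is to show $f\in\mathcal L_\alpha^c(\mathcal P)$ and that $f_n\to f$ in the $\mathcal L_\alpha^c(\mathcal P)$-norm. Fix $t>0$ and $\varepsilon>0$; pick $m$ large (depending on $t,\varepsilon$) so that both $\|f-f_m\|_\infty< t^\alpha\varepsilon$ and $\|f_m-f_n\|_{\mathcal L_\alpha^c(\mathcal P)}<\varepsilon$ for all $n\geq N(\varepsilon)$. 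For the homogeneous seminorm I would write, using that $(I-P_t)^{[\alpha]+1}$ is a completely positive-combination bounded operator and the Kadison--Schwarz inequality \eqref{1} applied to the unital completely positive map $P_t$,
\begin{align*}
\frac{1}{t^\alpha}\bigl\|P_t|(I-P_t)^{[\alpha]+1}(f-f_m)|^2\bigr\|_\infty^{1/2}
\leq \frac{2^{[\alpha]+1}}{t^\alpha}\bigl\|P_t|f-f_m|^2\bigr\|_\infty^{1/2}
\leq \frac{2^{[\alpha]+1}}{t^\alpha}\|f-f_m\|_\infty,
\end{align*}
where in the first inequality one expands $(I-P_t)^{[\alpha]+1}=\sum_{k}\binom{[\alpha]+1}{k}(-1)^kP_t^k$ and uses that each $P_t^k$ is unital completely positive together with the elementary bound $|\sum a_k x_k|^2 \leq (\sum|a_k|)(\sum|a_k||x_k|^2)$ from \eqref{1.1} (discrete measure), and in the second inequality one uses $\|P_t|g|^2\|_\infty\leq\|\,|g|^2\|_\infty=\|g\|_\infty^2$. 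The right-hand side is $\leq 2^{[\alpha]+1}\varepsilon$ by the choice of $m$.

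With this estimate in hand, the triangle inequality for the seminorm $\|\cdot\|_{\dot{\mathcal L}_\alpha^c(\mathcal P)}$ gives, for $n\geq N(\varepsilon)$ and the fixed $t$,
\begin{align*}
\frac{1}{t^\alpha}\bigl\|P_t|(I-P_t)^{[\alpha]+1}(f-f_n)|^2\bigr\|_\infty^{1/2}
\leq 2^{[\alpha]+1}\varepsilon + \|f_m-f_n\|_{\dot{\mathcal L}_\alpha^c(\mathcal P)}
\leq (2^{[\alpha]+1}+1)\varepsilon;
\end{align*}
taking the supremum over $t>0$ (the bound is uniform in $t$) yields $\|f-f_n\|_{\dot{\mathcal L}_\alpha^c(\mathcal P)}\lesssim\varepsilon$, and combined with $\|f-f_n\|_\infty\to 0$ we get $f_n\to f$ in $\mathcal L_\alpha^c(\mathcal P)$; applying the same bound with $n$ replaced by $0$ (i.e. using a single $f_m$ with $m$ large) also shows $f\in\mathcal L_\alpha^c(\mathcal P)$, so the space is complete. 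The only mild technical point — the ``main obstacle'', though it is routine — is the triangle inequality for $\|\cdot\|_{\dot{\mathcal L}_\alpha^c(\mathcal P)}$ and the operator-monotonicity step $\|P_t|(I-P_t)^{[\alpha]+1}g|^2\|_\infty\lesssim\|P_t|g|^2\|_\infty$; both are handled exactly as in the case $\alpha=0$ treated in \cite[Proposition 2.1]{mj12}, expanding the binomial and invoking \eqref{1} and \eqref{1.1}. Finally, for $\dag=r$ one notes $\|f\|_{\mathcal L_\alpha^r(\mathcal P)}=\|f^*\|_{\mathcal L_\alpha^c(\mathcal P)}$ and that $\{f_n\}$ Cauchy in $\mathcal L_\alpha^r(\mathcal P)$ means $\{f_n^*\}$ Cauchy in $\mathcal L_\alpha^c(\mathcal P)$, whose limit $g$ satisfies $g^*\in\mathcal L_\alpha^r(\mathcal P)$ and $f_n\to g^*$; and for $\dag=cr$, a sequence Cauchy in the intersection norm is Cauchy in each of $\mathcal L_\alpha^c(\mathcal P)$ and $\mathcal L_\alpha^r(\mathcal P)$, the two limits coincide in $\mathcal M$, and the common limit lies in the intersection with convergence in the max-norm.
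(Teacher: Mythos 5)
Your proof is correct and follows essentially the same route as the paper: reduce to the column case, observe the Cauchy sequence converges in $\mathcal M$, establish the auxiliary bound $\|P_{t}|(I-P_{t})^{[\alpha]+1}g|^{2}\|_{\infty}^{1/2}\lesssim_{\alpha}\|g\|_{\infty}$, and then use the $t$-dependent choice of $m$ with $\|f-f_{m}\|_{\infty}<t^{\alpha}\varepsilon$ together with the triangle inequality for the seminorm. The only cosmetic difference is that you derive the auxiliary bound by binomial expansion and the discrete Kadison--Schwarz inequality, whereas the paper simply iterates $\|(I-P_{t})^{j}g\|_{\infty}\leq 2\|(I-P_{t})^{j-1}g\|_{\infty}$; both give the constant $2^{[\alpha]+1}$, and you additionally spell out the $r$ and $cr$ cases that the paper leaves to the reader.
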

\begin{proof}
We only consider the situation $\dag=c$ and the other two cases are handled similarly.

It is easy to see that for any $t>0$ and $g\in\mathcal M$
\begin{align}\label{1615}
  \left\|P_{t}|(I-P_{t})^{[\alpha]+1}g|^{2}\right\|_{\infty}^{\frac{1}{2}}\lesssim_{\alpha}\|g\|_{\infty}.
\end{align}
Indeed, by (\ref{5151}) and the triangle inequality, (\ref{1615}) can be easily iterated as follows:
\begin{align*}
 \left\|P_{t}|(I-P_{t})^{[\alpha]+1}g|^{2}\right\|_{\infty}^{\frac{1}{2}}
 \leq\left\|(I-P_{t})^{[\alpha]+1}g\right\|_{\infty}
\leq 2\left\|(I-P_{t})^{[\alpha]}g\right\|_{\infty}
 \leq2^{[\alpha]+1}\left\|g\right\|_{\infty}.
\end{align*}

Let $\{f_{n}\}_{n\geq1}$ be a Cauchy sequence in $\mathcal L_{\alpha}^{c}(\mathcal P)$. By the definition of $\mathcal L_{\alpha}^{c}(\mathcal P)$, we obtain that $\{f_{n}\}_{n\geq1}$ is also a Cauchy sequence in $\mathcal M$. So there exists a unique $f$ such that $f_n\rightarrow f$ in the operator norm. Thus, for $t>0$ fixed, we can take an integer $m$ sufficiently large, so that $\|f-f_{m}\|_{\infty}<t^{\alpha}$. By (\ref{1615}) and Proposition \ref{ws}, we have
\begin{align*}
  \frac{1}{t^{\alpha}}
   \left\|P_{t}|(I-P_{t})^{[\alpha]+1}f|^{2}\right\|_{\infty}^{\frac{1}{2}}
  \leq&\frac{1}{t^{\alpha}}
   \left\|P_{t}|(I-P_{t})^{[\alpha]+1}(f-f_{m})|^{2}\right\|_{\infty}^{\frac{1}{2}}+\frac{1}{t^{\alpha}}
   \left\|P_{t}|(I-P_{t})^{[\alpha]+1}f_{m}|^{2}\right\|_{\infty}^{\frac{1}{2}}
   \\ \lesssim&_{\alpha}\frac{1}{t^{\alpha}}
   \|f-f_{m}\|_{\infty}+
  \left\|f_{m}\right\|_{\mathcal L_{\alpha}^{c}(\mathcal P)}\leq1+\sup_{n\geq1}\left\|f_{n}\right\|_{\mathcal L_{\alpha}^{c}(\mathcal P)}<\infty.
\end{align*}
The last inequality is true because the norms of $f_{n}$'s are uniformly bounded by the fact that $\{f_{n}\}_{n\geq1}$ is a Cauchy sequence in $\mathcal L_{\alpha}^{c}(\mathcal P)$. This yields the assertion $f\in\mathcal L_{\alpha}^{c}(\mathcal P)$.

Now, we show that the convergence happens in the $\mathcal L_{\alpha}^{c}(\mathcal P)$-norm. Let $\epsilon,t>0$ fixed. By the fact that $\{f_{n}\}_{n\geq1}$ is a Cauchy sequence in $\mathcal L_{\alpha}^{c}(\mathcal P)$, there exists a large integer $K_{1}$ such that
\begin{align}\label{1517}
  \|f_{n_{1}}-f_{n_{2}}\|_{\mathcal L_{\alpha}^{c}(\mathcal P)}<\epsilon
\end{align}
for every $n_{1},n_{2}\geq K_{1}$. Since the sequence $\{f_{n}\}_{n\geq1}$ converges to $f$ in $\mathcal M$, we can take a large integer $K_{2}$ such that $\|f-f_{n}\|_{\infty}<\epsilon$ for every $n\geq K_{2}$ and find an integer $m$ satisfying $m\geq\max\{K_{1},K_{2}\}$ such that $\|f-f_{m}\|_{\infty}<t^{\alpha}\epsilon$. Applying (\ref{1517}) and Proposition \ref{ws}, for $n\geq\max\{K_{1},K_{2}\}$, we have
\begin{align*}
 \frac{1}{t^{\alpha}}
   \left\|P_{t}|(I-P_{t})^{[\alpha]+1}(f-f_{n})|^{2}\right\|_{\infty}^{\frac{1}{2}}
\leq&\frac{1}{t^{\alpha}}
   \left\|P_{t}|(I-P_{t})^{[\alpha]+1}(f-f_{m})|^{2}\right\|_{\infty}^{\frac{1}{2}}
   \\&+\frac{1}{t^{\alpha}}
   \left\|P_{t}|(I-P_{t})^{[\alpha]+1}(f_{m}-f_{n})|^{2}\right\|_{\infty}^{\frac{1}{2}}
\\ \lesssim&_{\alpha}
\frac{1}{t^{\alpha}}
   \|f-f_{n}\|_{\infty}+\|f_{n}-f_{m}\|_{\mathcal L_{\alpha}^{c}(\mathcal P)}<2\epsilon.
\end{align*}
Therefore, we obtain that $\|f-f_{n}\|_{\mathcal L_{\alpha}^{c}(\mathcal P)}\lesssim\epsilon$ since $t>0$ is arbitrary. This complete this proof.
\end{proof}

By convention, we set $C_{M}^{N}=\frac{M!}{(M-N)!N!}$ for any $0\leq N\leq M$. We will need the following lemma in this paper.

\begin{lemma}\label{531}
For any $M\geq1$, we have
\begin{align*}
  (I-P_{t})^{M}=(I-P_{\frac{t}{2}})^{M}(I+\sum_{N=1}^{M}C_{M}^{N}P_{\frac{Nt}{2}}).
\end{align*}
\end{lemma}
\begin{proof} The proof is easy. By the fact that $(P_{\frac{t}{2}})^{N}=P_{\frac{Nt}{2}}$, we may compute
  \begin{align*}
 (I-P_{t})^{M}=&(I-P_{\frac{t}{2}}+P_{\frac{t}{2}}-P_{t})^{M}=(I-P_{\frac{t}{2}})^{M}(I+P_{\frac{t}{2}})^{M}
 \\=&(I-P_{\frac{t}{2}})^{M}(I+\sum_{N=1}^{M}C_{M}^{N}(P_{\frac{t}{2}})^{N})=(I-P_{\frac{t}{2}})^{M}(I+\sum_{N=1}^{M}C_{M}^{N}P_{\frac{Nt}{2}}).
\end{align*}
\end{proof}

We are now ready to prove Theorem \ref{e25}.
\begin{proof}[Proof of Theorem \ref{e25}]
We first deal with \eqref{b2}: for any $\alpha>0$,
$$ \left\|f\right\|_{\mathcal L_{\alpha}^{c}(\mathcal P)} \lesssim_\alpha\left\|f\right\|_{\Lambda_{\alpha}(\mathcal P)}.$$

We study first the situation $0<\alpha<1$. Let $t>0$.
 Since $P_{t}$ is contractive on $\mathcal{M}$, we deduce
 \begin{align*}
  \frac{1}{t^{\alpha}} \left\|P_{t}|f-P_{t}f|^{2}\right\|_{\infty}^{\frac{1}{2}}
 \leq&\frac{1}{t^{\alpha}} \left\|f-P_{t}f\right\|_{\infty}
   \leq\frac{1}{t^{\alpha}}\int^{t}_{0}\left\|\frac{\partial P_{s}f}{\partial s}\right\|_{\infty}ds
 \leq\frac{1}{\alpha}\sup_{s>0}\left\{\frac{1}{s^{\alpha-1}}\left\|\frac{\partial P_{s}f}{\partial s}\right\|_{\infty}\right\}.
\end{align*}
Taking the supremum over $t>0$ on the left-hand side of the above inequality, we get
 \begin{align}\label{242}
  \left\|f\right\|_{\mathcal L_{\alpha}^{c}(\mathcal P)}\leq\frac{1}{\alpha}\sup_{t>0}\left\{\frac{1}{t^{\alpha-1}}\left\|\frac{\partial P_{t}f}{\partial t}\right\|_{\infty}\right\}+\|f\|_{\infty}\leq\frac{1}{\alpha}\left\|f\right\|_{\Lambda_{\alpha}({\mathcal P)}}.
 \end{align}

 We now consider (\ref{b2}) for all $\alpha\geq1$. Let $t>0$.
 By Lemma \ref{531}, we obtain
\begin{align}\label{150}
 &\frac{1}{t^{\alpha}}\left\|P_{t}|(I-P_{t})^{[\alpha]+1}f|^{2}\right\|_{\infty}^{\frac{1}{2}}= \frac{1}{t^{\alpha}}\left\|P_{t}|(I-P_{\frac{t}{2}})^{[\alpha]+1}(I+\sum_{N=1}^{[\alpha]+1}C_{[\alpha]+1}^{N}P_{\frac{Nt}{2}})f|^{2}\right\|_{\infty}^{\frac{1}{2}}
\\ \nonumber\leq&\frac{1}{t^{\alpha}}\left\|P_{t}|(I-P_{\frac{t}{2}})^{[\alpha]+1}f|^{2}\right\|_{\infty}^{\frac{1}{2}}
+\frac{1}{t^{\alpha}}\left\|P_{t}|\sum_{N=1}^{[\alpha]+1}C_{[\alpha]+1}^{N}P_{\frac{Nt}{2}}(I-P_{\frac{t}{2}})^{[\alpha]+1}f|^{2}\right\|_{\infty}^{\frac{1}{2}}
\\ \nonumber=&:I_{1}+I_{2}.
\end{align}
 It is not difficult to derive the following inequality from the monotonicity (\ref{4}):
 \begin{align}\label{qq}
 P_{t}\leq\frac{t}{\frac{t}{2}}P_{\frac{t}{2}}=2P_{\frac{t}{2}}.
 \end{align}
Applying this property to $I_{1}$, we get
\begin{align*}
 I_{1}\leq\frac{\sqrt{2}}{t^{\alpha}}\left\|P_{\frac{t}{2}}|(I-P_{\frac{t}{2}})^{[\alpha]+1}f|^{2}\right\|_{\infty}^{\frac{1}{2}}
 =\frac{\sqrt{2}}{2^{\alpha}}\frac{1}{(\frac{t}{2})^{\alpha}}\left\|P_{\frac{t}{2}}|(I-P_{\frac{t}{2}})^{[\alpha]+1}f|^{2}\right\|_{\infty}^{\frac{1}{2}}
\leq\frac{\sqrt{2}}{2^{\alpha}}\left\|f\right\|_{\dot{\mathcal L}_{\alpha}^{c}(\mathcal P)}.
\end{align*}
Next, we deal with $I_{2}$. By the triangle inequality and the fact that $\|P_{t}f\|_{\infty}\leq \|f\|_{\infty}$, we obtain
\begin{align*}
  I_{2}\leq & \frac{1}{t^{\alpha}}\left\|\sum_{N=1}^{[\alpha]+1}C_{[\alpha]+1}^{N}P_{\frac{Nt}{2}}(I-P_{\frac{t}{2}})^{[\alpha]+1}f\right\|_{\infty}
\\ \leq&\sum_{N=1}^{[\alpha]+1}C_{[\alpha]+1}^{N}\frac{1}{t^{\alpha}}\left\|P_{\frac{Nt}{2}}(I-P_{\frac{t}{2}})^{[\alpha]+1}f\right\|_{\infty}
\\=&\sum_{N=1}^{[\alpha]+1}C_{[\alpha]+1}^{N}\frac{1}{t^{\alpha}}\left\|\int_{0}^{\frac{t}{2}}\frac{\partial P_{s_{1}}}{\partial s_{1}}\cdots\int_{0}^{\frac{t}{2}}\frac{\partial P_{s_{[\alpha]}}}{\partial s_{[\alpha]}} \int_{\frac{Nt}{2}}^{\frac{(N+1)t}{2}}\frac{\partial P_{s_{[\alpha]+1}}f}{\partial s_{[\alpha]+1}}ds_{[\alpha]+1}ds_{[\alpha]}\cdots ds_{1}\right\|_{\infty}.
\end{align*}
It is easy to see that $\frac{\partial P_{s}}{\partial s}\frac{\partial P_{t}}{\partial t}=\frac{\partial^{2} P_{t+s}}{\partial s^{2}}=\frac{\partial^{2} P_{t+s}}{\partial t^{2}}$. Applying this property and letting $v=s_{1}+\cdots+s_{[\alpha]+1}$, we have
\begin{align*}
   I_{2}\leq&\sum_{N=1}^{[\alpha]+1}C_{[\alpha]+1}^{N}\frac{1}{t^{\alpha}}\left\|\int_{0}^{\frac{t}{2}}\cdots\int_{0}^{\frac{t}{2}}\int_{\frac{Nt}{2}}^{\frac{(N+1)t}{2}}\frac{\partial^{[\alpha]+1} P_{s_{1}+s_{2}+\cdots s_{[\alpha]+1}}f}{\partial s_{[\alpha]+1}^{[\alpha]+1}}ds_{[\alpha]+1}ds_{[\alpha]}\cdots ds_{1}\right\|_{\infty}
   \\  \nonumber=&\sum_{N=1}^{[\alpha]+1}C_{[\alpha]+1}^{N}\frac{1}{t^{\alpha}}\left\|\int_{0}^{\frac{t}{2}}\cdots\int_{0}^{\frac{t}{2}} \int_{\frac{Nt}{2}+s_{1}+\cdots+s_{[\alpha]}}^{\frac{(N+1)t}{2}+s_{1}+\cdots+s_{[\alpha]}}\frac{\partial^{[\alpha]+1} P_{v}f}{\partial v^{[\alpha]+1}}dvds_{[\alpha]}\cdots ds_{1}\right\|_{\infty}
   \\ \nonumber\leq&\sum_{N=1}^{[\alpha]+1}C_{[\alpha]+1}^{N}\frac{1}{t^{\alpha}}\int_{0}^{\frac{t}{2}}\cdots\int_{0}^{\frac{t}{2}} \int_{\frac{Nt}{2}+s_{1}+\cdots+s_{[\alpha]}}^{\frac{(N+1)t}{2}+s_{1}+\cdots+s_{[\alpha]}}\frac{v^{\alpha-([\alpha]+1)}}{v^{\alpha-([\alpha]+1)}}\left\|\frac{\partial^{[\alpha]+1} P_{v}f}{\partial v^{[\alpha]+1}}\right\|_{\infty}dvds_{[\alpha]}\cdots ds_{1}
   \\ \nonumber \leq&\left\|f\right\|_{\Lambda_{\alpha}(\mathcal P)}\sum_{N=1}^{[\alpha]+1}C_{[\alpha]+1}^{N}\frac{1}{t^{\alpha}}\int_{0}^{\frac{t}{2}}\cdots\int_{0}^{\frac{t}{2}} \int_{\frac{Nt}{2}+s_{1}+\cdots+s_{[\alpha]}}^{\frac{(N+1)t}{2}+s_{1}+\cdots+s_{[\alpha]}}{v^{\alpha-([\alpha]+1)}}dvds_{[\alpha]}\cdots ds_{1}.
\end{align*}
If $\alpha$ is not an integer, we get
\begin{align}\label{f1}
  &\int_{0}^{\frac{t}{2}}\cdots\int_{0}^{\frac{t}{2}} \int_{\frac{Nt}{2}+s_{1}+\cdots+s_{[\alpha]}}^{\frac{(N+1)t}{2}+s_{1}+\cdots+s_{[\alpha]}}{v^{\alpha-([\alpha]+1)}}dvds_{[\alpha]}\cdots ds_{1}
\\ \nonumber \leq&\int_{0}^{\frac{t}{2}}\cdots\int_{0}^{\frac{t}{2}} \frac{(\frac{Nt}{2}+s_{1}+\cdots+s_{[\alpha]})^{\alpha-[\alpha]}}{\alpha-[\alpha]}ds_{[\alpha]}\cdots ds_{1}
\\ \nonumber \leq&\frac{(\frac{N+1}{2}t)^{\alpha-[\alpha]}}{(\alpha-[\alpha])}\int_{0}^{\frac{t}{2}}\cdots\int_{0}^{\frac{t}{2}} ds_{[\alpha]}\cdots ds_{1}=\frac{(N+1)^{\alpha-[\alpha]}t^{\alpha}}{2^{\alpha}(\alpha-[\alpha])}.
\end{align}
If $\alpha$ is an integer, we may compute
\begin{align}\label{162}
 &\int_{0}^{\frac{t}{2}}\cdots\int_{0}^{\frac{t}{2}} \int_{\frac{Nt}{2}+s_{1}+\cdots+s_{[\alpha]}}^{\frac{(N+1)t}{2}+s_{1}+\cdots+s_{[\alpha]}}{v^{\alpha-([\alpha]+1)}}dvds_{[\alpha]}\cdots ds_{1}
  \\ \nonumber=&\int_{0}^{\frac{t}{2}}\cdots\int_{0}^{\frac{t}{2}} \int_{\frac{Nt}{2}+s_{1}+\cdots+s_{[\alpha]}}^{\frac{(N+1)t}{2}+s_{1}+\cdots+s_{[\alpha]}}{v^{-1}}dvds_{\alpha}\cdots ds_{1}
  \\ \nonumber \leq &\int_{0}^{\frac{t}{2}} \cdots\int_{0}^{\frac{t}{2}} \frac{\frac{t}{2}} {\frac{Nt}{2}+s_{1}+\cdots+s_{\alpha}}ds_{\alpha}\cdots ds_{1}
 \\ \nonumber \leq &\int_{0}^{\frac{t}{2}} \frac{(\frac{t}{2})^{\alpha}} {\frac{Nt}{2}+s_{1}}ds_{1}\leq \frac{t^{\alpha}}{2^{\alpha}N}.
\end{align}
By (\ref{f1}) and (\ref{162}), we obtain
\begin{align}\label{152}
  I_{2}\lesssim&_{\alpha}\left\|f\right\|_{\dot{\Lambda}_{\alpha}(\mathcal P)}.
\end{align}
Taking the supremum over $t>0$ on the left-hand side of \eqref{150} and combining the estimates of $I_1$ and $I_2$, we arrive at
$$\|f\|_{\dot{\mathcal L}_{\alpha}^{c}(\mathcal P)}\leq \frac{\sqrt 2}{2^\alpha}\|f\|_{\dot{\mathcal L}_{\alpha}^{c}(\mathcal P)}+C_\alpha\|f\|_{\dot{\Lambda}_{\alpha}(\mathcal P)}$$
for some $C_\alpha>0$. Then by the fact that $\frac{\sqrt{2}}{2^{\alpha}}<1$ holds for all $\alpha\geq1$, one gets
$$\left\|f\right\|_{\dot{\mathcal L}_{\alpha}^{c}(\mathcal P)}\lesssim_{\alpha}\left\|f\right\|_{\dot{\Lambda}_{\alpha}(\mathcal P)}.$$
This implies
\begin{align}\label{f3}
 \left\|f\right\|_{\mathcal L_{\alpha}^{c}(\mathcal P)} =\|f\|_{\infty}+ \left\|f\right\|_{\dot{\mathcal L}_{\alpha}^{c}(\mathcal P)}\lesssim_{\alpha}\|f\|_{\infty}+\left\|f\right\|_{\dot{\Lambda}_{\alpha}(\mathcal P)}=\left\|f\right\|_{\Lambda_{\alpha}(\mathcal P)}.
\end{align}
Combining the estimates of (\ref{242}) and (\ref{f3}), we get the assertion (\ref{b2}).

 \bigskip

We now prove (\ref{192}): for $0<\alpha<2$,
\begin{align*}
 \left\|f\right\|_{\Lambda_{\alpha}(\mathcal P)}\lesssim_{\alpha}\left\|f\right\|_{\mathcal L_{\alpha}^{c}(\mathcal P)}.
\end{align*}
Similar to Step 1 in the proof of Proposition \ref{20}, it suffices to show that for all $f\in\mathcal M^{\circ}$,
\begin{align}\label{h7}
 \left\|f\right\|_{\dot{\Lambda}_{\alpha}({\mathcal P})}\lesssim_{\alpha} \left\|f\right\|_{\dot{\mathcal L}_{\alpha}^{c}(\mathcal P)}.
\end{align}
Fix $f\in \mathcal{M}^{\circ}$.

{\it Case 1.} $0<\alpha<1$.
Since $P_{t}f\rightarrow 0$ as $t\rightarrow \infty$, we can split $f$ into two parts for any $t>0$:
 \begin{align}\label{cd}
 f=f-P_{t}f+\sum_{m\geq0} (P_{2^{m}t}-P_{_{2^{m+1}t}})f.
 \end{align}
 For $t>0$ fixed, by $\frac{\partial P_{t}}{\partial t}P_{2^{m}t}=P_{t}\left.\frac{\partial P_{s}}{\partial s}\right|_{s=2^{m}t}$, we have
\begin{align*}
 \frac{1}{t^{\alpha-1}}\left\|\frac{\partial P_{t}f}{\partial t}\right\|_{\infty}
  \leq&\frac{1}{t^{\alpha-1}}\left\|\frac{\partial P_{t}}{\partial t}(f-P_{t}f)\right\|_{\infty}+\frac{1}{t^{\alpha-1}}\left\|\frac{\partial P_{t}}{\partial t}\sum_{m\geq0}(P_{2^{m}t}f-P_{2^{m+1}t}f)\right\|_{\infty}
  \\ \nonumber \leq&\frac{1}{t^{\alpha-1}}\left\|\frac{\partial P_{t}}{\partial t}(f-P_{t}f)\right\|_{\infty}+\sum_{m\geq0}\frac{1}{t^{\alpha-1}}\left\|P_{t}\left.\frac{\partial P_{s}}{\partial s}\right|_{s=2^{m}t}(f-P_{2^{m}t}f)\right\|_{\infty}
   \\ \nonumber \leq&\frac{1}{t^{\alpha-1}}\left\|\frac{\partial P_{t}}{\partial t}(f-P_{t}f)\right\|_{\infty}+\sum_{m\geq0}\frac{1}{t^{\alpha-1}}\left\|\left.\frac{\partial P_{s}}{\partial s}\right|_{s=2^{m}t}(f-P_{2^{m}t}f)\right\|_{\infty}.
\end{align*}
With the help of Proposition \ref{ws}, we get
\begin{align}\label{241}
 \frac{1}{t^{\alpha-1}}\left\|\frac{\partial P_{t}f}{\partial t}\right\|_{\infty}\leq &\frac{1}{t^{\alpha-1}}\left\|\frac{\partial P_{t}}{\partial t}(f-P_{t}f)\right\|_{\infty}+\sum_{m\geq0}\frac{1}{t^{\alpha-1}}\left\|\left.\frac{\partial P_{s}}{\partial s}\right|_{s=2^{m}t}(f-P_{2^{m}t}f)\right\|_{\infty}
   \\ \nonumber =&\frac{1}{t^{\alpha-1}}\left\|\left|\frac{\partial P_{t}}{\partial t}(f-P_{t}f)\right|^{2}\right\|^{\frac{1}{2}}_{\infty}+\sum_{m\geq0}\frac{1}{t^{\alpha-1}}\left\|\left|\left.\frac{\partial P_{s}}{\partial s}\right|_{s=2^{m}t}(f-P_{2^{m}t}f)\right|^{2}\right\|^{\frac{1}{2}}_{\infty}
  \\ \nonumber \lesssim&
  \frac{1}{t^{\alpha}}\left\| P_{t} \left|f-P_{t}f\right|^{2}\right\|^{\frac{1}{2}}_{\infty}+\sum_{m\geq0}\frac{1}{2^{m}}\frac{2^{m\alpha}}{(2^{m}t)^{\alpha}}\left\| P_{2^{m}t}\left|f-P_{2^{m}t}f\right|^{2}\right\|^{\frac{1}{2}}_{\infty}
  \\ \nonumber \leq&(1+\sum_{m\geq0}2^{m(\alpha-1)})\sup_{t>0}\frac{1}{t^{\alpha}}\left\| P_{t} \left|f-P_{t}f\right|^{2}\right\|^{\frac{1}{2}}_{\infty}.
\end{align}
It is not difficult to find that the series $\sum_{m\geq0}2^{m(\alpha-1)}$ converges for any $\alpha<1$. Then taking the supremum over $t>0$ on the left-hand side of (\ref{241}),  one gets
\begin{align}\label{15}
  \left\|f\right\|_{\dot{\Lambda}_{\alpha}({\mathcal P)}}\lesssim&_{\alpha}\left\|f\right\|_{\dot{\mathcal L}_{\alpha}^{c}(\mathcal P)}.
\end{align}

{\it Case 2.} $1\leq\alpha<2$.
 Note that, for $t>0$ fixed, $(I-P_{t})f$ can be written as
 \begin{align}\label{cd}
 (I-P_{t})f&=(I-P_{t})^{2}f+\sum_{n\geq1} (I-P_{t})(P_{nt}-P_{(n+1)t})f\\
 \nonumber&=(I-P_{t})^{2}f+\sum_{n\geq1} P_{nt}(I-P_{t})^{2}f.
 \end{align}
Applying this property and Proposition \ref{ws}, we have
\begin{align*}
\frac{1}{t^{\alpha-2}}\left\|\frac{\partial^{2}P_{t}}{\partial t^{2}}(I-P_{t})f\right\|_{\infty}
  \leq&\frac{1}{t^{\alpha-2}}\left\|\frac{\partial^{2}P_{t}}{\partial t^{2}}(I-P_{t})^{2}f\right\|_{\infty}
+\sum_{n\geq1}\frac{1}{t^{\alpha-2}}\left\|\frac{\partial^{2}P_{t}}{\partial t^{2}}P_{nt}(I-P_{t})^{2}f\right\|_{\infty}
\\ =&\frac{1}{t^{\alpha-2}}\left\|\left|\frac{\partial^{2}P_{t}}{\partial t^{2}}(I-P_{t})^{2}f\right|^{2}\right\|_{\infty}^{\frac{1}{2}}
+\sum_{n\geq1}\frac{1}{t^{\alpha-2}}\left\|\left|\left.\frac{\partial^{2}P_{s}}{\partial s^{2}}\right|_{s=(n+1)t}(I-P_{t})^{2}f\right|^{2}\right\|_{\infty}^{\frac{1}{2}}
 \\ \lesssim &\frac{1}{t^{\alpha}}\left\|P_{t}|(I-P_{t})^{2}f|^{2}\right\|_{\infty}^{\frac{1}{2}}
+\sum_{n\geq1}\frac{1}{(n+1)^{2}}\frac{1}{t^{\alpha}}\left\|P_{(n+1)t}|(I-P_{t})^{2}f|^{2}\right\|_{\infty}^{\frac{1}{2}}
\\(\ref{4})\leq&\frac{1}{t^{\alpha}}\left\|P_{t}|(I-P_{t})^{2}f|^{2}\right\|_{\infty}^{\frac{1}{2}}
+\sum_{n\geq1}\frac{1}{(n+1)^{2}}\frac{1}{t^{\alpha}}\left\|(n+1)P_{t}|(I-P_{t})^{2}f|^{2}\right\|_{\infty}^{\frac{1}{2}}
\\ \nonumber=&(1+\sum_{n\geq1}\frac{1}{(n+1)^{\frac{3}{2}}})\frac{1}{t^{\alpha}}\left\|P_{t}|(I-P_{t})^{2}f|^{2}\right\|_{\infty}^{\frac{1}{2}}.
\end{align*}
Thus, taking the supremum over $t>0$ on both sides and using Lemma \ref{190} (ii), for $1\leq\alpha<2$, we obtain
\begin{align}\label{196}
 \nonumber \left\|f\right\|_{\dot{\Lambda}_{\alpha}({\mathcal P)}}\leq&\frac{1}{1-2^{\alpha-([ \alpha]+1)}}\sup_{t>0}\frac{1}{t^{\alpha-2}}\left\|\frac{\partial^{2}P_{t}}{\partial t^{2}}(f-P_{t}f)\right\|_{\infty}
  \\ \nonumber\leq&\frac{1}{1-2^{\alpha-([ \alpha]+1)}}(1+\sum_{n\geq1}\frac{1}{(n+1)^{\frac{3}{2}}})\sup_{t>0}\frac{1}{t^{\alpha}}\left\|P_{t}|(I-P_{t})^{2}f|^{2}\right\|_{\infty}^{\frac{1}{2}}
  \\ \lesssim&\frac{1}{1-2^{\alpha-([ \alpha]+1)}}\|f\|_{\mathcal{\dot{L}}^{c}_{\alpha}(\mathcal{P})}.
\end{align}
In the last inequality, we have used the convergence of the series $\sum_{n\geq1}\frac{1}{(n+1)^{\frac{3}{2}}}$. Therefore, $\mathcal L_{\alpha}^{c}(\mathcal P)=\Lambda_{\alpha}({\mathcal P})$ with equivalent norms when $0<\alpha<2$. Taking adjoints, we see that $\mathcal L_{\alpha}^{r}(\mathcal P)$ is isomorphic to $\Lambda_{\alpha}({\mathcal P})$ when $0<\alpha<2$.
The proof of Theorem \ref{e25} is complete.
\end{proof}

\begin{remark} From the constant appearing in \eqref{196}, it is natural to conjecture that Theorem \ref{e25} should hold for any $\alpha\geq2$. But at the moment of writing, we are unable to verify it.
\end{remark}
\section{The proof of Theorem \ref{22}}
\hskip\parindent
The goal of this section is to show Theorem \ref{22}, which says that the elements in Campanato spaces associated with the Poisson semigroup $(P_{t})_{t\geq0}$ enjoys automatically the higher order cancellation property. This leads to a new perspective even in the commutative setting, and one may find further applications in the study of spectral multipliers like the ones in \cite{ cdly20, DSY}.

Let $(T_{t})_{t\geq0}$ be a Markov semigroup and $(P_{t})_{t\geq0}$ be its subordinated Poisson semigroup. Suppose that $k$ is an integer greater than $\alpha>0$. We define
\begin{align*}
  \mathcal L_{\alpha,k}^{c}(\mathcal P)=\left\{f\in \mathcal M: \left\|f\right\|_{\mathcal L_{\alpha,k}^{c}(\mathcal P)}<\infty\right\},
\end{align*}
where
\begin{align}\label{264}
 \left\|f\right\|_{\mathcal L_{\alpha,k}^{c}(\mathcal P)}=\|f\|_{\infty}+\left\|f\right\|_{\dot{\mathcal L}_{\alpha,k}^{c}(\mathcal P)},\quad \left\|f\right\|_{\dot{\mathcal L}_{\alpha,k}^{c}(\mathcal P)}=\sup_{t>0}\frac{1}{t^{\alpha}}
   \left\|P_{t}|(I-P_{t})^{k}f|^{2}\right\|_{\infty}^{\frac{1}{2}}.
\end{align}
As in Section 4, one can easily verify that $\left\|\cdot\right\|_{\mathcal L_{\alpha,k}^{c}(\mathcal P)}$ is a norm and $\mathcal L_{\alpha,k}^{c}(\mathcal P)$ is complete with respect to the norm $\|\cdot\|_{\mathcal L_{\alpha,k}^{c}(\mathcal P)}$.
The row space $\mathcal L_{\alpha,k}^{r}(\mathcal P)$ is the space of all $f$ such that $f^{*}\in\mathcal L_{\alpha,k}^{c}(\mathcal P)$, equipped with the norm $\left\|f\right\|_{\mathcal L_{\alpha,k}^{r}(\mathcal P)}=\left\|f^{*}\right\|_{\mathcal L_{\alpha,k}^{c}(\mathcal P)}$, and the symmetric space $\mathcal L_{\alpha,k}^{cr}(\mathcal P)$ is defined as the intersection of $\mathcal L_{\alpha,k}^{c}(\mathcal P)$ and $\mathcal L_{\alpha,k}^{r}(\mathcal P)$.
\begin{lemma}\label{j1j}
 Let $0<\alpha\leq4$. The function $F(\alpha,\cdot)$ defined on the integers
 $$F(\alpha,m)=\sum_{r=1}^{m-1}C_{m-1}^{r}(r+1)^{\alpha-m},\quad m\geq2$$
 satisfies
\begin{align*}
 \max_{m\geq2}F(\alpha,m)\leq F(\alpha,3).
\end{align*}

\end{lemma}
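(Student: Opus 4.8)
The plan is to analyze the function $F(\alpha,m)=\sum_{r=1}^{m-1}C_{m-1}^{r}(r+1)^{\alpha-m}$ on the integers $m\geq 2$ and show that its maximum over $m$ is attained at $m=3$ for all $0<\alpha\le 4$. First I would record the small values explicitly: $F(\alpha,2)=2^{\alpha-2}$ and $F(\alpha,3)=2\cdot 2^{\alpha-3}+3^{\alpha-3}=2^{\alpha-2}+3^{\alpha-3}$, so trivially $F(\alpha,3)>F(\alpha,2)$ for all $\alpha$. Thus the only real content is the bound $F(\alpha,m)\le F(\alpha,3)$ for all $m\ge 4$. The natural strategy is to show that for $m\ge 3$ (or at least $m\ge 4$) the sequence $m\mapsto F(\alpha,m)$ is decreasing, i.e. $F(\alpha,m+1)\le F(\alpha,m)$; combined with the two explicit values this yields the claim.

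To compare $F(\alpha,m+1)$ with $F(\alpha,m)$, I would use the Pascal identity $C_{m}^{r}=C_{m-1}^{r}+C_{m-1}^{r-1}$ to expand
\[
F(\alpha,m+1)=\sum_{r=1}^{m}C_{m}^{r}(r+1)^{\alpha-m-1}
=\sum_{r=1}^{m}\bigl(C_{m-1}^{r}+C_{m-1}^{r-1}\bigr)(r+1)^{\alpha-m-1},
\]
re-index the second sum (replace $r$ by $r+1$) so that it runs over $r=0,\dots,m-1$ against the factor $(r+2)^{\alpha-m-1}$, and then bound it term by term against $F(\alpha,m)=\sum_{r=1}^{m-1}C_{m-1}^{r}(r+1)^{\alpha-m}$. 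Each summand of $F(\alpha,m)$ carries an extra factor $(r+1)$ compared with the corresponding pieces of $F(\alpha,m+1)$, and since $r+1\ge 2$ while the shifted index in the second sum only increases the base, the key elementary inequalities to verify are of the form $(r+1)^{\alpha-m-1}+ (r+2)^{\alpha-m-1}\le (r+1)^{\alpha-m}$, or after dividing, $(r+1)^{-1}+\bigl(\tfrac{r+2}{r+1}\bigr)^{\alpha-m-1}(r+1)^{-1}\le 1$. For $\alpha-m-1<0$ (true when $m\ge 4$ since $\alpha\le 4$) we have $(r+2)/(r+1)>1$ raised to a negative power, so the second term is $\le (r+1)^{-1}$, and the whole left side is $\le 2/(r+1)\le 1$ as soon as $r\ge 1$; the boundary terms ($r=0$ coming only from the shifted binomials, and $r=m$) have to be checked separately and are small.

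The main obstacle I anticipate is the careful bookkeeping of the two boundary terms produced by the Pascal-expansion and re-indexing — the $r=0$ term $C_{m-1}^{0}(m+1\text{-shift})$ and the top term $r=m$ — which do not pair naturally with a summand of $F(\alpha,m)$ and must be absorbed by slack in the interior estimates or by a direct numerical check for the first few $m$. A cleaner alternative, which I would fall back on if the term-by-term comparison gets awkward, is to write $F(\alpha,m)=\mathbb E\bigl[(1+B_m)^{\alpha-m}\bigr]\cdot 2^{m-1}$-type probabilistic reformulation, or simply to estimate $F(\alpha,m)\le \sum_{r=1}^{m-1}C_{m-1}^{r} 2^{\alpha-m}= (2^{m-1}-1)2^{\alpha-m}<2^{\alpha-1}/2 = 2^{\alpha-2}$ for the terms with $r\ge 1$ using $(r+1)^{\alpha-m}\le 2^{\alpha-m}$ when $\alpha\le m$; this already gives $F(\alpha,m)<2^{\alpha-2}<F(\alpha,3)$ whenever $\alpha\le m$, i.e. for all $m\ge 4$ when $\alpha\le 4$, and handles $m=4$ with $\alpha=4$ as a limiting equality to be checked by hand. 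I expect this second, coarser route to be the quickest path to a complete proof, with the monotonicity argument kept in reserve for sharper constants.
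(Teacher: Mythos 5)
Your fallback estimate --- the one you designate as ``the quickest path to a complete proof'' --- does not work. From $(r+1)^{\alpha-m}\le 2^{\alpha-m}$ you get $F(\alpha,m)\le(2^{m-1}-1)2^{\alpha-m}=2^{\alpha-1}-2^{\alpha-m}$, which increases with $m$ toward $2^{\alpha-1}$; the asserted step $(2^{m-1}-1)2^{\alpha-m}<2^{\alpha-1}/2=2^{\alpha-2}$ would require $m<2$. Concretely, at $\alpha=3$, $m=4$ the crude bound gives $7/2=3.5$ while $F(3,3)=3$, and as $\alpha\to0$ with $m=4$ it gives $7/16$ while $F(\alpha,3)\to 1/4+1/27\approx0.287$. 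So this route cannot close the argument for any $\alpha<4$; it is not merely the case $\alpha=m=4$ that needs separate treatment.

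Your primary route (Pascal's identity plus monotonicity of $F(\alpha,\cdot)$ for $m\ge3$) is the right idea, but it is left with exactly the gap you anticipate: after re-indexing, the unmatched $r=0$ term $2^{\alpha-m-1}$ must be absorbed by slack in the paired inequalities, and at the endpoint there is genuinely no slack, since $F(4,3)=F(4,4)=7$; a casual absorption will therefore fail. The paper's proof sidesteps this bookkeeping by computing the difference directly: writing $C_{m}^{r}(r+1)^{\alpha-m-1}=\frac{1}{r+1}C_{m}^{r}(r+1)^{\alpha-m}$, one finds
$F(\alpha,m)-F(\alpha,m+1)=\sum_{r=1}^{m-1}\frac{(m-1)!\,r(m-r-1)}{r!\,(m-r)!\,(r+1)^{m+1-\alpha}}-\frac{1}{(m+1)^{m+1-\alpha}}$,
where every summand is nonnegative; keeping only the $r=1$ term $\frac{m-2}{2^{m+1-\alpha}}$ and using $m\ge3$ together with $(m+1)^{m+1-\alpha}\ge 2^{m+1-\alpha}$ (valid because $m+1-\alpha\ge0$ when $\alpha\le 4$) gives the result. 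To complete your version you would essentially need to reproduce that computation; as written, the proposal does not constitute a proof.
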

\begin{proof}
Let $\alpha>0$.
For $m=2$, one can easily check that $F(\alpha,2)\leq F(\alpha,3)$, since $F(\alpha,2)=2^{\alpha-2}$ and $F(\alpha,3)=2^{\alpha-2}+3^{\alpha-3}$.
 For $m\geq3$, we will conclude the desired result by showing that $F(\alpha,m)$ is decreasing with respect to $m$.
By simple computation,
\begin{align*}
  F(\alpha,m)-F(\alpha,m+1)=& \sum_{r=1}^{m-1}C_{m-1}^{r}(r+1)^{\alpha-m}-\sum_{r=1}^{m}C_{m}^{r}(r+1)^{\alpha-(m+1)}                   \\=&\sum_{r=1}^{m-1}(C_{m-1}^{r}-\frac{1}{r+1}C_{m}^{r})(r+1)^{\alpha-m}-C_{m}^{m}(m+1)^{\alpha-(m+1)}
\\=&\sum_{r=1}^{m-1}\frac{(m-1)!r(m-r-1)}{r!(m-r)!(r+1)^{m+1-\alpha}}-\frac{1}{(m+1)^{m+1-\alpha}}.
\end{align*}
Noting that all the terms in the above summation is non-negative, and thus the summation is not smaller than the first term $r=1$. Therefore, by noting $m\geq3\geq\alpha-1$,

\begin{align*}
F(\alpha,m)-F(\alpha,m+1)\geq& \frac{m-2}{2^{m+1-\alpha}}-\frac{1}{(m+1)^{m+1-\alpha}}\\
\geq&\frac{1}{2^{m+1-\alpha}}-\frac{1}{2^{m+1-\alpha}}\geq0.
 \end{align*}
\end{proof}

To prove Theorem \ref{22}, we first need the following variant of Theorem \ref{e25}.
\begin{theorem}\label{261}
Suppose $f\in\mathcal{M}$. Then
\begin{itemize}
  \item [\rm{(i)}] for all $\alpha>0$ and all integer $k>\alpha$, $\left\|f\right\|_{\mathcal L_{\alpha,k}^{c}(\mathcal P)}\lesssim_{\alpha,k}\left\|f\right\|_{\Lambda_{\alpha,k}(\mathcal P)};$
  \item  [\rm{(ii)}] for all $0<\alpha<\alpha_0$ where $\alpha_0$ verifies $2^{\alpha_{0}-2}+3^{\alpha_{0}-3}=1$ and all integer $k>\alpha$, $\left\|f\right\|_{\Lambda_{\alpha,k}(\mathcal P)}\lesssim_{\alpha,k}\left\|f\right\|_{\mathcal L_{\alpha,k}^{c}(\mathcal P)}$.
\end{itemize}
\end{theorem}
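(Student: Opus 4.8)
The plan is to run the proof of Theorem \ref{e25} almost verbatim, with the integer $k>\alpha$ in place of $[\alpha]+1$, keeping precise track of the combinatorial constant that produces the threshold $\alpha_0$ in part (ii).

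\emph{Part (i).} No new idea is needed beyond what was used for \eqref{b2}. For every $t>0$, unitality of $P_t$ gives $\frac1{t^\alpha}\|P_t|(I-P_t)^kf|^2\|_\infty^{1/2}\le\frac1{t^\alpha}\|(I-P_t)^kf\|_\infty$. Writing, by the fundamental theorem of calculus (the iterated form of the identities already exploited in the proof of Lemma \ref{531} and in \eqref{150}--\eqref{152}), $(I-P_t)^kf=(-1)^k\int_0^t\!\cdots\!\int_0^t\left.\frac{\partial^kP_vf}{\partial v^k}\right|_{v=s_1+\cdots+s_k}ds_1\cdots ds_k$, and combining $\|\frac{\partial^kP_vf}{\partial v^k}\|_\infty\le v^{\alpha-k}\|f\|_{\dot\Lambda_{\alpha,k}(\mathcal P)}$ with $\int_{[0,t]^k}(s_1+\cdots+s_k)^{\alpha-k}\,ds=t^\alpha\int_{[0,1]^k}(u_1+\cdots+u_k)^{\alpha-k}\,du$ — the last integral being finite since $\alpha>0$ — we get $\frac1{t^\alpha}\|P_t|(I-P_t)^kf|^2\|_\infty^{1/2}\lesssim_{\alpha,k}\|f\|_{\dot\Lambda_{\alpha,k}(\mathcal P)}$; adding $\|f\|_\infty$ and taking the supremum over $t$ gives (i). (Equivalently, one may copy the $I_1$--$I_2$ split of \eqref{150} with $M=k$: absorb the $(I-P_{t/2})^k$-term via $P_t\le 2P_{t/2}$ with admissible factor $\sqrt2/2^\alpha<1$ when $\alpha>\tfrac12$, and estimate the lower-order term as in \eqref{152}.)

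\emph{Part (ii).} As in Step 1 of the proof of Proposition \ref{20}, both seminorms annihilate $\ker(A_\infty^{1/2})$, so it suffices to prove $\|f\|_{\dot\Lambda_{\alpha,k}(\mathcal P)}\lesssim_{\alpha,k}\|f\|_{\dot{\mathcal L}_{\alpha,k}^c(\mathcal P)}$ for $f\in\mathcal M^{\circ}$. When $k=1$ (which forces $\alpha<1$) this is exactly \eqref{192} in the range $0<\alpha<1$. Assume $k\ge2$ and fix $t>0$. Decompose
\begin{align*}
f=(I-P_t)^{k-1}f+\sum_{j=1}^{k-1}(-1)^{j+1}C_{k-1}^{j}P_{jt}f,
\end{align*}
and, since $(I-P_t)^{k-1}f\in\mathcal M^{\circ}$, unfold further $(I-P_t)^{k-1}f=\sum_{n\ge0}P_{nt}(I-P_t)^kf$ (telescoping, convergence weak-$*$). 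Apply $\frac{\partial^kP_t}{\partial t^k}$ to both sides, scale by $t^{k-\alpha}$, and use $\frac{\partial^kP_t}{\partial t^k}P_{mt}=\left.\frac{\partial^kP_s}{\partial s^k}\right|_{s=(m+1)t}$ together with Proposition \ref{ws} (with $j=k$) and the monotonicity \eqref{4}: the first group is controlled by $\lesssim_k\big(\sum_{n\ge0}(n+1)^{-(k-1/2)}\big)\|f\|_{\dot{\mathcal L}_{\alpha,k}^c(\mathcal P)}$ (convergent series since $k\ge2$), while the $j$-th term of the second group contributes precisely $C_{k-1}^{j}(j+1)^{\alpha-k}\|f\|_{\dot\Lambda_{\alpha,k}(\mathcal P)}$ (here $(j+1)^{\alpha-k}<1$ as $\alpha<k$). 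Taking the supremum over $t>0$ yields
\begin{align*}
\|f\|_{\dot\Lambda_{\alpha,k}(\mathcal P)}\le C_{\alpha,k}\,\|f\|_{\dot{\mathcal L}_{\alpha,k}^c(\mathcal P)}+F(\alpha,k)\,\|f\|_{\dot\Lambda_{\alpha,k}(\mathcal P)},\qquad F(\alpha,k)=\sum_{r=1}^{k-1}C_{k-1}^{r}(r+1)^{\alpha-k}.
\end{align*}
By Lemma \ref{j1j} (applicable since $\alpha<\alpha_0<4$), $F(\alpha,k)\le F(\alpha,3)=2^{\alpha-2}+3^{\alpha-3}$, which is $<1$ exactly when $\alpha<\alpha_0$, so the last term may be absorbed on the left, giving (ii). The a priori finiteness of $\|f\|_{\dot\Lambda_{\alpha,k}(\mathcal P)}$ needed for this absorption is obtained in the standard way, working first on a weak-$*$ dense set and invoking the completeness in Proposition \ref{5111}.

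\emph{Main obstacle.} The crucial point is the self-absorption step in (ii): it closes only when $F(\alpha,k)<1$ uniformly in $k$, which is precisely the content of Lemma \ref{j1j}, with the worst case $k=3$ yielding the equation $2^{\alpha_0-2}+3^{\alpha_0-3}=1$; this is the sole source of the restriction $\alpha<\alpha_0$ and is where the argument falls short of all $\alpha>0$. A secondary, purely technical, matter is justifying the manipulations with the infinite sum $\sum_{n\ge0}P_{nt}(I-P_t)^kf$, which converges only weak-$*$ on $\mathcal M^{\circ}$: one must check that $\frac{\partial^kP_t}{\partial t^k}$ and the estimates pass through the limit, which is routine since $\frac{\partial^kP_t}{\partial t^k}$ is a bounded normal operator for each fixed $t>0$.
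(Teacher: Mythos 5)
Your proposal is correct and follows essentially the same route as the paper: part (i) via contractivity of $P_t$ and the iterated-integral representation of $(I-P_t)^k f$ (the paper splits into the cases $0<\alpha<1$ and $\alpha\ge1$, using the $J_1$--$J_2$ absorption for the latter, whereas your scaling computation of $\int_{[0,t]^k}(s_1+\cdots+s_k)^{\alpha-k}\,ds$ handles all $\alpha>0$ at once, which is a harmless streamlining); and part (ii) via the identical pair of decompositions $I=(I-P_t)^{k-1}+\sum_{r=1}^{k-1}(-1)^{r+1}C_{k-1}^{r}P_{rt}$ and $(I-P_t)^{k-1}f=\sum_{n\ge0}P_{nt}(I-P_t)^{k}f$, Proposition \ref{ws}, the monotonicity \eqref{4}, and the absorption governed by $F(\alpha,k)<1$ from Lemma \ref{j1j}. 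Your explicit remark on the a priori finiteness needed for the absorption is a point the paper leaves implicit, but it does not change the argument.
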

\begin{proof}
Note that the situation $k=[\alpha]+1$ has been discussed in Theorem \ref{e25}. Thus, in the rest of this proof, we just need to consider the case of $k\geq[\alpha]+2$.

 $\rm{(i)}$. We start with the case of $0<\alpha<1$. Similar to the proof of \eqref{b2}, for $t>0$ fixed, we get
\begin{align*}
  \frac{1}{t^{\alpha}}\left\|P_{t}|(I-P_{t})^{k}f|^{2}\right\|_{\infty}^{\frac{1}{2}}\leq&\frac{1}{t^{\alpha}}\left\|(I-P_{t})^{k}f\right\|_{\infty}
\\=&\frac{1}{t^{\alpha}}\left\|\int^{t}_{0}\cdots\int^{t}_{0}\int^{t}_{0}\frac{\partial^{k}P_{s_{1}+\cdots +s_{k-1}+s_{k}}f}{\partial s_{k}^{k}}ds_{k}ds_{k-1}\cdots ds_{1}\right\|_{\infty}
\\(v=s_{1}+\cdots +s_{k-1}+s_{k})\leq&\frac{1}{t^{\alpha}}\int^{t}_{0}\cdots\int^{t}_{0}\int^{t+s_{1}+\cdots +s_{k-1}}_{s_{1}+\cdots +s_{k-1}}\left\|\frac{\partial^{k}P_{v}f}{\partial v^{k}}\right\|_{\infty}dvds_{k-1}\cdots ds_{1}
\\ \leq&\left\|f\right\|_{\dot{\Lambda}_{\alpha,k}(\mathcal P)}\frac{1}{t^{\alpha}}\int^{t}_{0}\cdots\int^{t}_{0}\int^{t+s_{1}+\cdots +s_{k-1}}_{s_{1}+\cdots +s_{k-1}}v^{\alpha-k}dvds_{k-1}\cdots ds_{1}.
\end{align*}
Note that
$$\alpha-k<\alpha+1-k<\cdots<\alpha+(k-2)-k<-1,\;-1<\alpha-1<0.$$
We then obtain
\begin{align*}
    \frac{1}{t^{\alpha}}\left\|P_{t}|(I-P_{t})^{k}f|^{2}\right\|_{\infty}^{\frac{1}{2}}
    \leq&\left\|f\right\|_{\dot{\Lambda}_{\alpha,k}(\mathcal P)}\frac{1}{t^{\alpha}}\int^{t}_{0}\cdots\int^{t}_{0}\frac{(s_{1}+\cdots +s_{k-1})^{\alpha+1-k}}{k-(\alpha+1)}ds_{k-1}\cdots ds_{1}
\\ \leq&\left\|f\right\|_{\dot{\Lambda}_{\alpha,k}(\mathcal P)}\frac{1}{t^{\alpha}}\int^{t}_{0}\frac{s_{1}^{\alpha-1}}{(k-(\alpha+1))\cdots(1-\alpha)}ds_{k-1}\cdots ds_{1}
\\ =&\frac{1}{(k-(\alpha+1))\cdots(1-\alpha)\alpha}\left\|f\right\|_{\dot{\Lambda}_{\alpha,k}(\mathcal P)}.
\end{align*}
Taking the supremum over $t>0$ on the left-hand side, we get
\begin{align*}
 \left\|f\right\|_{\dot{\mathcal L}_{\alpha,k}^{c}(\mathcal P)}\leq\frac{1}{(k-(\alpha+1))(k-(\alpha+2))\cdots(1-\alpha)\alpha}\left\|f\right\|_{\dot{\Lambda}_{\alpha,k}(\mathcal P)},
\end{align*}
which implies
\begin{align}\label{x1}
\left\|f\right\|_{\mathcal L_{\alpha,k}^{c}(\mathcal P)}\leq& \|f\|_{\infty}+\frac{1}{(k-(\alpha+1))\cdots(1-\alpha)\alpha}\left\|f\right\|_{\dot{\Lambda}_{\alpha,k}(\mathcal P)}
\\ \nonumber \leq&\max\{1,\frac{1}{(k-(\alpha+1))\cdots(1-\alpha)\alpha}\}\left\|f\right\|_{\Lambda_{\alpha,k}(\mathcal P)}.
\end{align}

We now consider the situation $\alpha\geq1$. By Lemma \ref{531} and \eqref{qq}, for $t>0$ fixed, we have
\begin{align}\label{h}
 \frac{1}{t^{\alpha}}\left\|P_{t}|(I-P_{t})^{k}f|^{2}\right\|_{\infty}^{\frac{1}{2}}
=&  \frac{1}{t^{\alpha}}\left\|P_{t}\left|(I-P_{\frac{t}{2}})^{k}(I+\sum_{r=1}^{k}C_{k}^{r}P_{\frac{r}{2}t})f\right|^{2}\right\|_{\infty}^{\frac{1}{2}}
\\ \nonumber
   \leq&\frac{1}{t^{\alpha}}\left\|P_{t}\left|(I-P_{\frac{t}{2}})^{k}f\right|^{2}\right\|_{\infty}^{\frac{1}{2}}
+\sum_{r=1}^{k}C_{k}^{r}\frac{1}{t^{\alpha}}
  \left\|P_{t}\left|P_{\frac{rt}{2}}(I-P_{\frac{t}{2}})^{k}f\right|^{2}\right\|_{\infty}^{\frac{1}{2}}
\\ \nonumber \leq&
\frac{\sqrt{2}}{2^{\alpha}}\frac{1}{(\frac{t}{2})^{\alpha}}\left\|P_{\frac{t}{2}}\left|(I-P_{\frac{t}{2}})^{k}f\right|^{2}\right\|_{\infty}^{\frac{1}{2}}
+\sum_{r=1}^{k}C_{k}^{r}\frac{1}{t^{\alpha}}
  \left\|P_{t}\left|P_{\frac{rt}{2}}(I-P_{\frac{t}{2}})^{k}f\right|^{2}\right\|_{\infty}^{\frac{1}{2}}
\\ \nonumber =:& J_{1}+J_{2}.
\end{align}
For $J_{2}$, by \eqref{5151}, we deduce
\begin{align*}
 J_{2}\leq&\sum_{r=1}^{k}C_{k}^{r}\frac{1}{t^{\alpha}}
  \left\|P_{\frac{rt}{2}}(I-P_{\frac{t}{2}})^{k}f\right\|_{\infty}
\\  =&\sum_{r=1}^{k}C_{k}^{r}\frac{1}{t^{\alpha}}\left\|\int^{\frac{t}{2}}_{0}\cdots\int^{\frac{t}{2}}_{0}\int^{\frac{(r+1)t}{2}}_{\frac{rt}{2}}\frac{\partial^{k}P_{s_{1}+\cdots +s_{k-1}+s_{k}}f}{\partial s_{k}^{k}}ds_{k}ds_{k-1}\cdots ds_{1}\right\|_{\infty}.
\end{align*}
Let $s=s_{1}+\cdots +s_{k-1}+s_{k}$. One gets
\begin{align*}
 J_{2}\leq&
 \sum_{r=1}^{k}C_{k}^{r}\frac{1}{t^{\alpha}}\int^{\frac{t}{2}}_{0}\cdots\int^{\frac{t}{2}}_{0}\int^{\frac{(r+1)t}{2}+s_{1}+\cdots +s_{k-1}}_{\frac{rt}{2}+s_{1}+\cdots +s_{k-1}}\left\|\frac{\partial^{k}P_{s}f}{\partial s^{k}}\right\|_{\infty}dsds_{k-1}\cdots ds_{1}
 \\ \leq&\left\|f\right\|_{\dot{\Lambda}_{\alpha,k}(\mathcal P)}\sum_{r=1}^{k}C_{k}^{r}\frac{1}{t^{\alpha}}\int^{\frac{t}{2}}_{0}\cdots\int^{\frac{t}{2}}_{0}\int^{\frac{(r+1)t}{2}+s_{1}+\cdots +s_{k-1}}_{\frac{rt}{2}+s_{1}+\cdots +s_{k-1}}s^{\alpha-k}dsds_{k-1}\cdots ds_{1}
  \\ \leq&\left\|f\right\|_{\dot{\Lambda}_{\alpha,k}(\mathcal P)}\sum_{r=1}^{k}C_{k}^{r}\frac{1}{t^{\alpha}}\int^{\frac{t}{2}}_{0}\cdots\int^{\frac{t}{2}}_{0}\frac{(\frac{rt}{2}+s_{1}+\cdots +s_{k-1})^{\alpha+1-k}}{k-(\alpha+1)}ds_{k-1}\cdots ds_{1}
  \\ \leq&\sum_{r=1}^{k}C_{k}^{r}\frac{r^{\alpha+1-k}}{2^{\alpha}(k-(\alpha+1))}\left\|f\right\|_{\dot{\Lambda}_{\alpha,k}(\mathcal P)}.
\end{align*}
Taking the supremum over $t>0$ on both sides of \eqref{h}, we obtain
\begin{align*}
  \left\|f\right\|_{\dot{\mathcal L}_{\alpha,k}^{c}(\mathcal P)}\leq\frac{\sqrt{2}}{2^{\alpha}}\left\|f\right\|_{\dot{\mathcal L}_{\alpha,k}^{c}(\mathcal P)}+\sum_{r=1}^{k}C_{k}^{r}\frac{r^{\alpha+1-k}}{2^{\alpha}(k-(\alpha+1))}\left\|f\right\|_{\dot{\Lambda}_{\alpha,k}(\mathcal P)}.
\end{align*}
Since $\frac{\sqrt{2}}{2^{\alpha}}<1$ holds for any $\alpha\geq1$. Thus, we have
$$\left\|f\right\|_{\dot{\mathcal L}_{\alpha,k}^{c}(\mathcal P)}\lesssim_{\alpha,k}\left\|f\right\|_{\dot{\Lambda}_{\alpha,k}(\mathcal P)}.$$
This implies
\begin{align}\label{x2}
\left\|f\right\|_{\mathcal L_{\alpha,k}^{c}(\mathcal P)}=&\|f\|_{\infty}+\left\|f\right\|_{\dot{\mathcal L}_{\alpha,k}^{c}(\mathcal P)}
\lesssim_{\alpha,k} \|f\|_{\infty}+\left\|f\right\|_{\dot{\Lambda}_{\alpha,k}(\mathcal P)}
=\left\|f\right\|_{\Lambda_{\alpha,k}(\mathcal P)}.
\end{align}
Integrating \eqref{x1} with \eqref{x2}, we obtain (\rm{i}).

\bigskip

\rm{(ii)}.
Similar to the proof of Theorem \ref{e25}, we just need to show for $f\in\mathcal M^{\circ}$,
\begin{align}\label{x5}
  \left\|f\right\|_{\dot{\Lambda}_{\alpha,k}(\mathcal P)}\lesssim_{\alpha,k}\left\|f\right\|_{\dot{\mathcal L}_{\alpha,k}^{c}(\mathcal P)}.
\end{align}
Let $0<\alpha<\alpha_0$ and $k\geq[\alpha]+2$. By (\ref{cd}), for a fixed $f\in\mathcal M^{\circ}$ and $t>0$, one has
\begin{align*}
   (I-P_{t})^{k-1}f=&(I-P_{t})^{k}f+\sum_{n\geq1} (I-P_{t})^{k-1}(P_{nt}-P_{(n+1)t})f
   \\ \nonumber =&(I-P_{t})^{k}f+\sum_{n\geq1} P_{nt}(I-P_{t})^{k}f.
\end{align*}
Besides, one may compute
$$I=(I-P_{t})^{k-1}+I-(I-P_{t})^{k-1}
  =(I-P_{t})^{k-1}+\sum_{r=1}^{k-1}(-1)^{r+1}C_{k-1}^{r}P_{rt}.$$
Applying these two properties and \eqref{a1}, for $t>0$ fixed, we then have
\begin{align*}
  \frac{1}{t^{\alpha-k}}\left\|\frac{\partial^{k} P_{t}f}{\partial t^{k}}\right\|_{\infty}
  \leq&  \frac{1}{t^{\alpha-k}}\left\|\frac{\partial^{k} P_{t}}{\partial t^{k}}(I-P_{t})^{k-1}f\right\|_{\infty}
+\sum_{r=1}^{k-1}C_{k-1}^{r}\frac{1}{t^{\alpha-k}}\left\|\frac{\partial^{k} P_{t}}{\partial t^{k}}P_{rt}f\right\|_{\infty}
\\ \leq&  \frac{1}{t^{\alpha-k}}\left\|\frac{\partial^{k} P_{t}}{\partial t^{k}}(I-P_{t})^{k}f\right\|_{\infty}
+\sum_{n\geq1}  \frac{1}{t^{\alpha-k}}\left\|\frac{\partial^{k} P_{t}}{\partial t^{k}}P_{nt}(I-P_{t})^{k}f\right\|_{\infty}
\\&+\sum_{r=1}^{k-1}C_{k-1}^{r}\frac{1}{t^{\alpha-k}}\left\|\frac{\partial^{k} P_{t}}{\partial t^{k}}P_{rt}f\right\|_{\infty}
\\ =&  \frac{1}{t^{\alpha-k}}\left\|\left|\frac{\partial^{k} P_{t}}{\partial t^{k}}(I-P_{t})^{k}f\right|^{2}\right\|_{\infty}^{\frac{1}{2}}
+\sum_{n\geq1}  \frac{1}{t^{\alpha-k}}\left\|\left|\left.\frac{\partial^{k} P_{s}}{\partial s^{k}}\right|_{(n+1)t}(I-P_{t})^{k}f\right|^{2}\right\|_{\infty}^{\frac{1}{2}}
\\&+\sum_{r=1}^{k-1}C_{k-1}^{r}\frac{(r+1)^{\alpha-k}}{((r+1)t)^{\alpha-k}}\left\|\left.\frac{\partial^{k} P_{v}}{\partial v^{k}}\right|_{(r+1)t}f\right\|_{\infty}
\\ \leq& 2^{\frac{k(k+4)}{2}}\frac{1}{t^{\alpha}}\left\| P_{t}\left|(I-P_{t})^{k}f\right|^{2}\right\|_{\infty}^{\frac{1}{2}}
\\&+2^{\frac{k(k+4)}{2}}\sum_{n\geq1}\frac{1}{(n+1)^{k}}  \frac{1}{t^{\alpha}}\left\|P_{(n+1)t}\left|(I-P_{t})^{k}f\right|^{2}\right\|_{\infty}^{\frac{1}{2}}
\\&+\sum_{r=1}^{k-1}C_{k-1}^{r}(r+1)^{\alpha-k}\left\|f\right\|_{\dot{\Lambda}_{\alpha,k}(\mathcal P)}.
\end{align*}
Note that $\sum_{n\geq1}\frac{1}{(n+1)^{k-\frac{1}{2}}}$ is a convergent series since $k\geq2$. By \eqref{4}, we have
\begin{align*}
 \frac{1}{t^{\alpha-k}}\left\|\frac{\partial^{k} P_{t}f}{\partial t^{k}}\right\|_{\infty}
  \leq& 2^{\frac{k(k+4)}{2}}\frac{1}{t^{\alpha}}\left\| P_{t}\left|(I-P_{t})^{k}f\right|^{2}\right\|_{\infty}^{\frac{1}{2}}
\\&+2^{\frac{k(k+4)}{2}}\sum_{n\geq1}\frac{1}{(n+1)^{k-\frac{1}{2}}}  \frac{1}{t^{\alpha}}\left\|P_{t}\left|(I-P_{t})^{k}f\right|^{2}\right\|_{\infty}^{\frac{1}{2}}
\\ \nonumber&+\sum_{r=1}^{k-1}C_{k-1}^{r}(r+1)^{\alpha-k}\left\|f\right\|_{\dot{\Lambda}_{\alpha,k}(\mathcal P)}
\\ \nonumber\leq&2^{\frac{k(k+4)}{2}}(1+\sum_{n\geq1}\frac{1}{(n+1)^{k-\frac{1}{2}}}) \left\|f\right\|_{\dot{\mathcal L}_{\alpha,k}^{c}(\mathcal P)}
\\ \nonumber&+\sum_{r=1}^{k-1}C_{k-1}^{r}(r+1)^{\alpha-k}\left\|f\right\|_{\dot{\Lambda}_{\alpha,k}(\mathcal P)}.
\end{align*}
Taking the supremum over $t>0$ on the left-hand side, we get
\begin{align}\label{x6}
  \left\|f\right\|_{\dot{\Lambda}_{\alpha,k}(\mathcal P)}\leq2^{\frac{k(k+4)}{2}}(1+\sum_{n\geq1}\frac{1}{(n+1)^{k-\frac{1}{2}}}) \left\|f\right\|_{\dot{\mathcal L}_{\alpha,k}^{c}(\mathcal P)}
+\sum_{r=1}^{k-1}C_{k-1}^{r}(r+1)^{\alpha-k}\left\|f\right\|_{\dot{\Lambda}_{\alpha,k}(\mathcal P)}.
\end{align}
 By Lemma \ref{j1j}, we have
\begin{align*}
\sum_{r=1}^{k-1}C_{k-1}^{r}(r+1)^{\alpha-k}= F(\alpha,k)\leq F(\alpha,3)=2^{\alpha-2}+3^{\alpha-3}<2^{\alpha_0-2}+3^{\alpha_0-3}=1.
\end{align*}
Here in the last inequality, we have used the fact that $F(\alpha,3)$ is a increasing function with respect to $\alpha>0$.
Therefore, when $0<\alpha<\alpha_{0}$, we obtain
\begin{align*}
   \left\|f\right\|_{\dot{\Lambda}_{\alpha,k}(\mathcal P)}\lesssim\frac{2^{\frac{k(k+4)}{2}}}{1-\sum_{r=1}^{k-1}C_{k-1}^{r}(r+1)^{\alpha-k}}\left\|f\right\|_{\dot{\mathcal L}_{\alpha,k}^{c}(\mathcal P)}
\end{align*}
 for any $k\geq[\alpha]+2$.
 Hence the assertion is completely proved.
\end{proof}
We are now at the position to show Theorem \ref{22}.

\begin{proof}[Proof of Theorem \ref{22}]
Let $\alpha>0$ and $k$ be an integer such that $k\geq[\alpha]+1$. Fix $t>0$,
by the triangle inequality, \eqref{4} and \eqref{1}, we have
\begin{align}\label{532}
  \frac{1}{t^{\alpha}}\left\|P_{t}|(I-P_{t})^{k}f|^{2}\right\|_{\infty}^{\frac{1}{2}}
   =&\frac{1}{t^{\alpha}}\left\|P_{t}|(I-P_{t})^{k-1}(f-P_{t}f)|^{2}\right\|_{\infty}^{\frac{1}{2}}
\\ \nonumber\leq&\frac{1}{t^{\alpha}}\left\|P_{t}|(I-P_{t})^{k-1}f|^{2}\right\|_{\infty}^{\frac{1}{2}}
+\frac{1}{t^{\alpha}}\left\|P_{t}|P_{t}(I-P_{t})^{k-1}f|^{2}\right\|_{\infty}^{\frac{1}{2}}
\\ \nonumber\leq& \left\|f\right\|_{\dot{\mathcal L}_{\alpha,k-1}^{c}(\mathcal P)}
+\frac{1}{t^{\alpha}}\left\|P_{2t}|(I-P_{t})^{k-1}f|^{2}\right\|_{\infty}^{\frac{1}{2}}
\\ \nonumber\leq& \left\|f\right\|_{\dot{\mathcal L}_{\alpha,k-1}^{c}(\mathcal P)}
+\frac{2}{t^{\alpha}}\left\|P_{t}|(I-P_{t})^{k-1}f|^{2}\right\|_{\infty}^{\frac{1}{2}}
\\ \nonumber\leq& 3\left\|f\right\|_{\dot{\mathcal L}_{\alpha,k-1}^{c}(\mathcal P)}
\\ \nonumber \leq& 3^{k-([\alpha]+1)}\left\|f\right\|_{\dot{\mathcal L}_{\alpha}^{c}(\mathcal P)}.
\end{align}
The last two inequalities are true because we have used iteration. Then, taking the supremum over $t>0$ on the left-hand side, we get
\begin{align*}
 \left\|f\right\|_{\dot{\mathcal L}_{\alpha,k}^{c}(\mathcal P)}\leq 3^{k-([\alpha]+1)}\left\|f\right\|_{\dot{\mathcal L}_{\alpha}^{c}(\mathcal P)}.
\end{align*}
This implies
\begin{align}\label{v1}
  \left\|f\right\|_{\mathcal L_{\alpha,k}^{c}(\mathcal P)}
 \leq\|f\|_{\infty}+3^{k-([\alpha]+1)}\left\|f\right\|_{\dot{\mathcal L}_{\alpha}^{c}(\mathcal P)}
 \leq3^{k-([\alpha]+1)}\left\|f\right\|_{\mathcal L_{\alpha}^{c}(\mathcal P)}.
\end{align}
On the other hand, for $0<\alpha<\alpha_{0}$, together with Proposition \ref{20}, Theorem \ref{e25} and Theorem \ref{261}, we deduce
\begin{align}\label{v2}
 \|f\|_{\mathcal L_{\alpha}^{c}(\mathcal P)}\lesssim_{\alpha}\|f\|_{\Lambda_{\alpha}(\mathcal P)}\lesssim_{\alpha,k}\|f\|_{\Lambda_{\alpha,k}(\mathcal P)}\lesssim_{\alpha,k}\|f\|_{\mathcal L_{\alpha,k}^{c}(\mathcal P)}
\end{align}
holds for any $k\geq[\alpha]+1$.
Therefore, combining the estimates of \eqref{v1} and \eqref{v2} yields the assertion.
\end{proof}
Similar to Theorem \ref{b3}, it is easy to get the following corollary by Theorem \ref{261}.
\begin{corollary}
Let $0<\alpha<\alpha_0$ where $2^{\alpha_{0}-2}+3^{\alpha_{0}-3}=1$, and $k$ be an integer such that $k>\alpha$. Then the space $\mathcal L_{\alpha,k}^{c}(\mathcal P)$ is completely isomorphic to $\mathcal L_{\alpha,k}^{r}(\mathcal P)$ with equivalent norms.
\end{corollary}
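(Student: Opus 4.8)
The plan is to derive this corollary from Theorem \ref{261} in exactly the same way that Theorem \ref{b3}(i) was derived from Theorem \ref{e25}, namely by identifying both $\mathcal L_{\alpha,k}^{c}(\mathcal P)$ and $\mathcal L_{\alpha,k}^{r}(\mathcal P)$ with the single Lipschitz space $\Lambda_{\alpha,k}(\mathcal P)$. First I would record the elementary symmetry $\|f^{*}\|_{\Lambda_{\alpha,k}(\mathcal P)}=\|f\|_{\Lambda_{\alpha,k}(\mathcal P)}$, which follows from the positivity of $(P_{t})_{t\geq0}$ exactly as in the remark following \eqref{e33}. Fixing $0<\alpha<\alpha_{0}$ and an integer $k>\alpha$, Theorem \ref{261} gives $\|f\|_{\mathcal L_{\alpha,k}^{c}(\mathcal P)}\lesssim_{\alpha,k}\|f\|_{\Lambda_{\alpha,k}(\mathcal P)}$ and $\|f\|_{\Lambda_{\alpha,k}(\mathcal P)}\lesssim_{\alpha,k}\|f\|_{\mathcal L_{\alpha,k}^{c}(\mathcal P)}$. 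Applying these to $f^{*}$ and using the definition $\|f\|_{\mathcal L_{\alpha,k}^{r}(\mathcal P)}=\|f^{*}\|_{\mathcal L_{\alpha,k}^{c}(\mathcal P)}$ together with the symmetry just noted, one obtains
\begin{align*}
\|f\|_{\mathcal L_{\alpha,k}^{r}(\mathcal P)}=\|f^{*}\|_{\mathcal L_{\alpha,k}^{c}(\mathcal P)}\approx_{\alpha,k}\|f^{*}\|_{\Lambda_{\alpha,k}(\mathcal P)}=\|f\|_{\Lambda_{\alpha,k}(\mathcal P)}\approx_{\alpha,k}\|f\|_{\mathcal L_{\alpha,k}^{c}(\mathcal P)},
\end{align*}
so that $\mathcal L_{\alpha,k}^{c}(\mathcal P)=\mathcal L_{\alpha,k}^{r}(\mathcal P)$ with equivalent norms.

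To upgrade this to a \emph{complete} isomorphism I would repeat the argument at every matrix level. Recall that the operator space structure is defined, as in the discussion after Theorem \ref{b3}, by $\|f\|_{M_{n}(\mathcal L_{\alpha,k}^{c}(\mathcal P))}:=\|f\|_{\mathcal L_{\alpha,k}^{c}((P_{t}\otimes I_{n})_{t\geq0})}$, and similarly for the row space. One first checks the routine fact that $(T_{t}\otimes I_{n})_{t\geq0}$ is again a quantum Markov semigroup on the finite von Neumann algebra $M_{n}(\mathcal M)$, with generator $A\otimes I_{n}$, so that by functional calculus its subordinated Poisson semigroup is precisely $(P_{t}\otimes I_{n})_{t\geq0}$. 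Applying the previous paragraph to this semigroup yields
\begin{align*}
\|f\|_{M_{n}(\mathcal L_{\alpha,k}^{r}(\mathcal P))}\approx_{\alpha,k}\|f\|_{M_{n}(\mathcal L_{\alpha,k}^{c}(\mathcal P))}
\end{align*}
with constants depending only on $\alpha$ and $k$, hence independent of $n$; this is exactly the statement that the identity map is a complete isomorphism.

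The proof has no genuinely hard step; the one point demanding care is the uniformity of the constants in $n$. This reduces to inspecting that Theorem \ref{261} and its inputs --- Proposition \ref{ws}, Lemma \ref{190}, Proposition \ref{20}, Lemma \ref{531} and Lemma \ref{j1j} --- use only the Markov property, the Kadison-Schwarz inequalities \eqref{1}--\eqref{1.1}, the subordination formula \eqref{2} and the monotonicity \eqref{4}, all of which are stable under tensoring with $M_{n}$ and produce constants involving $\alpha$ and $k$ alone. Granting this observation, the corollary follows.
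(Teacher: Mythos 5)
Your proposal is correct and follows exactly the route the paper intends: the paper derives this corollary from Theorem \ref{261} "similar to Theorem \ref{b3}", i.e.\ by identifying both $\mathcal L_{\alpha,k}^{c}(\mathcal P)$ and $\mathcal L_{\alpha,k}^{r}(\mathcal P)$ with the adjoint-symmetric space $\Lambda_{\alpha,k}(\mathcal P)$, and upgrading to a complete isomorphism by applying the same norm equivalences to $(P_t\otimes I_n)_{t\geq0}$ with constants independent of $n$.
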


\section{The proof of Theorem \ref{b3}(ii)}
\hskip\parindent
In this section, we are going to prove Theorem \ref{b3}(ii). That is, under the condition $\Gamma^{2}\geq0$, the column little Campanato space $\ell_{\alpha}^{c}(\mathcal P)$ and the row little Campanato space $\ell_{\alpha}^{r}(\mathcal P)$ via the subordinated Poisson semigroup $(P_t)_{t\geq0}$ are isomorphic with equivalent norms for $0<\alpha<\frac{1}{2}$.

Let $(T_t)_{t\geq0}$ be a Markov semigroup and $(P_t)_{t\geq0}$ be its subordinated Poisson semigroup.
For each $0<\alpha<1$,  we define
\begin{align*}
 \ell_{\alpha}^{c}(\mathcal P)=\left\{f\in \mathcal{M}: \|f\|_{\ell_{\alpha}^{c}(\mathcal P)}<\infty\right\},
\end{align*}
where
\begin{align*}
 &\|f\|_{\ell_{\alpha}^{c}(\mathcal P)}=\|f\|_{\infty}+\sup_{t>0}\frac{1}{t^{\alpha}} \left\|P_{t}|f|^{2}-|P_{t}f|^{2}\right\|_{\infty}^{\frac{1}{2}}.
\end{align*}
It is well known that for a fixed $t>0$, the following triangle inequality holds for any $f,g\in\mathcal M$
\begin{align}\label{lc}
\left\|P_{t}|f+g|^{2}-\left|P_{t} (f+g)\right|^{2}\right\|^{\frac{1}{2}}_{\infty}\leq \left\|P_{t}|f|^{2}-\left|P_{t} f\right|^{2}\right\|^{\frac{1}{2}}_{\infty}+\left\|P_{t}|g|^{2}-\left|P_{t} g\right|^{2}\right\|^{\frac{1}{2}}_{\infty}
\end{align}
(see e.g. \cite[Proposition 2.1]{mj12}).
Then it is trivial that the quantity $\|\cdot\|_{\ell_{\alpha}^{c}(\mathcal P)}$ defines a norm.
 One defines the row space $\ell_{\alpha}^{r}(\mathcal P)$ with the norm $\|f\|_{\ell_{\alpha}^{r}(\mathcal P)}=\|f^{*}\|_{\ell_{\alpha}^{c}(\mathcal P)}$ and the mixture space $\ell_{\alpha}^{cr}(\mathcal P)$  with the norm $\|f\|_{\ell_{\alpha}^{cr}(\mathcal P)}=\max\{\|f\|_{\ell_{\alpha}^{c}(\mathcal P)},\|f\|_{\ell_{\alpha}^{r}(\mathcal P)}\}$.

The completeness of these spaces can be proved as in the case of $\mathcal L_{\alpha}^{c}(\mathcal P)$, and we omit the details.
\begin{proposition}\label{p9}
Let $0<\alpha<1$. The space $\ell_{\alpha}^{\dag}(\mathcal P)$ is complete with respect to the norm $\|\cdot\|_{\ell_{\alpha}^{\dag}(\mathcal P)}$, where $\dag=\{c,r,cr\}$.
\end{proposition}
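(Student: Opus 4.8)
The plan is to run verbatim the argument of Proposition \ref{5111}, with the ordinary triangle inequality of the operator norm replaced throughout by the quasi-triangle inequality \eqref{lc} for the quantity $g\mapsto\|P_t|g|^2-|P_tg|^2\|_\infty^{1/2}$. As there, it suffices to treat $\dag=c$: the case $\dag=r$ follows since $f\mapsto f^*$ is an isometry from $\ell_\alpha^r(\mathcal P)$ onto $\ell_\alpha^c(\mathcal P)$, and $\dag=cr$ is then the intersection of two complete spaces equipped with the intersection norm.

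The first step is to record the substitute for \eqref{1615} in this setting: for every $t>0$ and every $g\in\mathcal M$ one has $\|P_t|g|^2-|P_tg|^2\|_\infty^{1/2}\leq\|g\|_\infty$. Indeed, $P_t|g|^2-|P_tg|^2\geq0$ by the Kadison--Schwarz inequality \eqref{1}, while $P_t|g|^2\leq\|g\|_\infty^2\,1$ because $P_t$ is unital and positive, so that $0\leq P_t|g|^2-|P_tg|^2\leq\|g\|_\infty^2\,1$. With this in hand, I would take a Cauchy sequence $\{f_n\}_{n\geq1}$ in $\ell_\alpha^c(\mathcal P)$; since $\|\cdot\|_\infty\leq\|\cdot\|_{\ell_\alpha^c(\mathcal P)}$ it is Cauchy in $\mathcal M$ and converges in operator norm to some $f\in\mathcal M$. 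To show $f\in\ell_\alpha^c(\mathcal P)$, fix $t>0$, pick $m$ with $\|f-f_m\|_\infty<t^\alpha$, and apply \eqref{lc} together with the bound just proved to get
\begin{align*}
\frac{1}{t^\alpha}\left\|P_t|f|^2-|P_tf|^2\right\|_\infty^{1/2}
\leq\frac{1}{t^\alpha}\|f-f_m\|_\infty+\|f_m\|_{\ell_\alpha^c(\mathcal P)}
\leq1+\sup_{n\geq1}\|f_n\|_{\ell_\alpha^c(\mathcal P)}<\infty,
\end{align*}
the last supremum being finite as $\{f_n\}$ is Cauchy; taking the supremum over $t>0$ gives $f\in\ell_\alpha^c(\mathcal P)$.

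For the convergence in $\ell_\alpha^c(\mathcal P)$, fix $\epsilon,t>0$; choose $K_1$ with $\|f_{n_1}-f_{n_2}\|_{\ell_\alpha^c(\mathcal P)}<\epsilon$ for $n_1,n_2\geq K_1$, choose $K_2$ with $\|f-f_n\|_\infty<\epsilon$ for $n\geq K_2$, and pick $m\geq\max\{K_1,K_2\}$ with $\|f-f_m\|_\infty<t^\alpha\epsilon$. Then for $n\geq\max\{K_1,K_2\}$, the inequality \eqref{lc} yields
\begin{align*}
\frac{1}{t^\alpha}\left\|P_t|f-f_n|^2-|P_t(f-f_n)|^2\right\|_\infty^{1/2}
\leq\frac{1}{t^\alpha}\|f-f_m\|_\infty+\|f_m-f_n\|_{\ell_\alpha^c(\mathcal P)}<2\epsilon,
\end{align*}
and combining the supremum over $t>0$ with $\|f-f_n\|_\infty<\epsilon$ gives $\|f-f_n\|_{\ell_\alpha^c(\mathcal P)}\lesssim\epsilon$. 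I do not expect a genuine obstacle here: the sole new ingredient compared with Proposition \ref{5111} is the use of \eqref{lc} in place of the triangle inequality, and it is already available. The one point to keep in mind is that \eqref{lc} governs a non-homogeneous quantity, so when splitting off the tail $f-f_m$ one must invoke the majorant $\|P_t|g|^2-|P_tg|^2\|_\infty^{1/2}\leq\|g\|_\infty$ rather than a homogeneity argument; everything else is routine and is therefore omitted in the text.
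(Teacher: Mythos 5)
Your proposal is correct and follows exactly the route the paper intends: the paper omits the proof of this proposition, stating only that it proceeds "as in the case of $\mathcal L_{\alpha}^{c}(\mathcal P)$" (Proposition \ref{5111}), and your argument is precisely that adaptation, with the triangle inequality \eqref{lc} and the elementary bound $\|P_t|g|^2-|P_tg|^2\|_\infty^{1/2}\leq\|g\|_\infty$ playing the roles of the triangle inequality and \eqref{1615}.
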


\begin{remark}\label{lk}
We point out that the inequality \eqref{lc} holds true for any unital completely positive operator replacing $P_{t}$.
\end{remark}

Now we introduce the $\Gamma^{2}$ criterion.
\begin{definition}\label{04}
  P.A Meyer's gradient form $\Gamma$ (also called ``Carr$\acute{\mathrm{e}}$ du Champ") associated with $(T_{t})_{t\geq0}$ generated by $A$ is defined as
\begin{align}\label{06}
  2 \Gamma(f, g)=Af^{*} g+f^{*}Ag-A(f^{*} g)
\end{align}
for $f^{*}, g, f^{*} g\in \mathrm{dom}_{\infty}(A)$.
\end{definition}
When $f=g$, we simply write $\Gamma(f)=\Gamma
(f,f)$. We remark that $\Gamma$ coincides with the usual gradient form on $\mathbb R^d$ when $A =-\Delta$ since $\Gamma(f, g)=\nabla f^{*}\nabla g$.

Let $t>0$ and $f_{t}$ be a $\mathcal M$-valued function on a measure space $(\Omega,d\mu)$ such that $\Gamma(f_{t})$ is weakly measurable. By \cite[Lemma 3.1]{mj12}, we have the following Kadison-Schwarz inequality:
\begin{align}\label{pm}
\Gamma(\int_{\Omega}f_{t}d\mu(t))\leq\int_{\Omega}|d\mu|(t)\int_{\Omega}\Gamma(f_{t})|d\mu|(t).
\end{align}

Bakry-$\acute{\mathrm{E}}$mery's iterated gradient form $\Gamma^{2}$ associated with $(T_{t})_{t\geq0}$ is defined as
\begin{align}\label{07}
   2\Gamma^{2}(f, g)=\Gamma\left(Af, g\right)+\Gamma \left(f, Ag\right)-A\Gamma\left(f, g\right)
\end{align}
whenever $f,g$ are nice such that all the quantities on the right hand side make senses.


According to \cite[Chapter 3, proposition 3.3]{B06} and \cite[Lemma 1.2]{m00}, the semigroup $(T_{t})_{t\geq0}$ satisfies the $\Gamma^{2}\geq0$, that is, $\Gamma^2(f,f)\geq0$ for all nice $f$, if and only if
\begin{align*}
  T_r\Gamma(T_{s+t}f)\leq T_{r+t}\Gamma(T_{s}f)
\end{align*}
holds for all $r,s,t>0$ and $f\in\mathcal M$. Here the quantity $T_u\Gamma(T_{v}f)$ should be read as
\begin{align*}
 T_{u}((AT_{v}f^{\ast})(T_{v}f))+T_{u}((T_{v}f^{\ast})(AT_{v}f))-AT_{u}((T_{v}f^{\ast})(T_{v}f)),
\end{align*}
which obviously makes sense for $f\in\mathcal M$ since $T_vf\in \mathrm{dom}_\infty(A)$.
However, $\Gamma^{2}\geq0$ is not always true. For instance, when $A$ is a Laplace-Beltrami operator on a complete manifold, the $\Gamma^{2}\geq0$ criterion holds if and only if the manifold has nonnegative Ricci curvature everywhere, see \cite{BFG12}.

We will use the following lemma due to P.A. Meyer, see also \cite{mj12,mj10} for a noncommutative version. The Carr\'e du champ associated to the generator $A^\frac12$ is denoted by $\Gamma_{A^\frac12}$.
\begin{lemma}\label{e.1}
Let $(T_t)_{t\geq0}$ be a Markov semigroup and $(P_t)_{t\geq0}$ be its subordinated Poisson semigroup. Then the following statements hold.
 \begin{itemize}
   \item [\rm{(i)}] For any $f\in \mathcal{M}$, $s, t>0$,
   $$T_{s}|f|^{2}-\left|T_{s} f\right|^{2}=2 \int_{0}^{s} T_{s-t} \Gamma\left(T_{t} f\right) d t.$$
  \item [\rm{(ii)}] For any $f\in\mathcal{M}$, $s, t>0$, it holds that
  $$P_{t} \Gamma_{A^{\frac{1}{2}}}\left(P_{s} f\right)=\int_{0}^{\infty} P_{t+v} \Gamma\left(P_{s+v} f\right) d v+\int_{0}^{\infty} P_{t+v}\left|\frac{\partial P_{s+v} f}{\partial v}\right|^{2} d v.$$
 \end{itemize}
\end{lemma}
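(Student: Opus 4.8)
The plan is to prove (i) by the classical differentiation argument of P.A.~Meyer and (ii) by combining the same idea with the subordination relation $(\partial_t^{2}-A)P_t=0$ and the factorisation $\partial_t^{2}-A=(\partial_t-A^{\frac{1}{2}})(\partial_t+A^{\frac{1}{2}})$. For (i), fix $s>0$ and for $0<r<s$ put $\phi(r)=T_{s-r}(|T_rf|^{2})$; since $T_rf\in\mathrm{dom}_{\infty}(A)$ for $r>0$ this is a well-defined $\mathcal{M}$-valued function (one may first assume $f$ lies in a weak-$*$ dense subalgebra of sufficiently regular elements on which $\Gamma$ acts on the relevant products, as in \cite{mj12}, and then pass to the general case using \eqref{5151}). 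Differentiating, and using $\partial_rT_{s-r}=AT_{s-r}=T_{s-r}A$ together with the defining identity \eqref{06}, which yields $\partial_r(|T_rf|^{2})=-(AT_rf)^{*}(T_rf)-(T_rf)^{*}(AT_rf)=-2\Gamma(T_rf)-A(|T_rf|^{2})$, the two terms containing $A$ cancel and we obtain
\begin{align*}
\phi'(r)=-2\,T_{s-r}\Gamma(T_rf).
\end{align*}
Integrating over $(0,s)$, with $\phi(r)\to T_s(|f|^{2})$ as $r\to0^{+}$ and $\phi(r)\to|T_sf|^{2}$ as $r\to s^{-}$ (first integrate over $[\varepsilon,s-\varepsilon]$ and let $\varepsilon\to0$), gives (i).

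For (ii) I would first reduce to $f\in\mathcal{M}^{\circ}$. Writing $f=f_{0}+f_{1}$ with $f_{0}\in\mathcal{M}^{\circ}$ and $f_{1}\in\ker(A_{\infty}^{\frac{1}{2}})$ as in Lemma \ref{gf}, one has $Af_{1}=A^{\frac{1}{2}}(A^{\frac{1}{2}}f_{1})=0$, hence $\tau(\Gamma(f_{1}))=-\frac{1}{2}\tau(A(|f_{1}|^{2}))=0$; as $\Gamma(f_{1})\geq0$, faithfulness of $\tau$ forces $\Gamma(f_{1})=0$, and likewise $\Gamma_{A^{\frac{1}{2}}}(f_{1})=0$. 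Since $\Gamma$ and $\Gamma_{A^{\frac{1}{2}}}$ are positive-semidefinite forms, all cross terms with $f_{1}$ vanish by Cauchy--Schwarz, so neither side of (ii) changes if $f$ is replaced by $f_{0}$; the benefit is that the boundary term at $v=\infty$ below will vanish because $P_{s+v}f_{0}\to0$. With $u_{v}:=P_{s+v}f$, the subordination relation gives $Au_{v}=\partial_{v}^{2}u_{v}$, and expanding $\partial_{v}^{2}(|u_{v}|^{2})=(\partial_{v}^{2}u_{v})^{*}u_{v}+u_{v}^{*}(\partial_{v}^{2}u_{v})+2|\partial_{v}u_{v}|^{2}$ together with \eqref{06} yields the pointwise identity
\begin{align*}
\Gamma(P_{s+v}f)+\Big|\frac{\partial P_{s+v}f}{\partial v}\Big|^{2}=\frac{1}{2}\big(\partial_{v}^{2}-A\big)\big(|P_{s+v}f|^{2}\big).
\end{align*}

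It remains to apply $P_{t+v}$, integrate in $v$, and recognise a primitive. From $\partial_{v}(P_{t+v}k_{v})=P_{t+v}(\partial_{v}-A^{\frac{1}{2}})k_{v}$ (the product rule together with $\partial_{v}P_{t+v}=-A^{\frac{1}{2}}P_{t+v}$), applied with $k_{v}=(\partial_{v}+A^{\frac{1}{2}})(|P_{s+v}f|^{2})$, and using $(\partial_{v}-A^{\frac{1}{2}})(\partial_{v}+A^{\frac{1}{2}})=\partial_{v}^{2}-A$, one gets
\begin{align*}
P_{t+v}\big(\partial_{v}^{2}-A\big)\big(|P_{s+v}f|^{2}\big)=\frac{\partial}{\partial v}\Big[P_{t+v}\big(\partial_{v}+A^{\frac{1}{2}}\big)\big(|P_{s+v}f|^{2}\big)\Big].
\end{align*}
Applying $P_{t+v}$ to the pointwise identity and integrating over $v\in(0,\infty)$, the left side becomes the right side of (ii), while the right side telescopes to $\frac{1}{2}\big[P_{t+v}(\partial_{v}+A^{\frac{1}{2}})(|P_{s+v}f|^{2})\big]_{v=0}^{v=\infty}$; here \eqref{06} (now for the generator $A^{\frac{1}{2}}$) gives $(\partial_{v}+A^{\frac{1}{2}})(|P_{s+v}f|^{2})|_{v=0}=-2\Gamma_{A^{\frac{1}{2}}}(P_{s}f)$ after the $A^{\frac{1}{2}}(|P_{s}f|^{2})$ terms cancel, and the $v\to\infty$ endpoint vanishes since $f\in\mathcal{M}^{\circ}$. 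This is exactly $P_{t}\Gamma_{A^{\frac{1}{2}}}(P_{s}f)=\int_{0}^{\infty}P_{t+v}\Gamma(P_{s+v}f)\,dv+\int_{0}^{\infty}P_{t+v}|\partial_{v}P_{s+v}f|^{2}\,dv$.

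The algebra here is short; the genuine work, as in \cite{mj12,mj10}, is the analytic bookkeeping, and this is the main obstacle: one must legitimise differentiation under $P_{t+v}$ and under $\tau$, interchange $\int_{0}^{\infty}(\cdot)\,dv$ with the semigroup, establish convergence of the two right-hand integrals (here Proposition \ref{ws} bounds the derivatives and the decay of $P_{r}f$ on $\mathcal{M}^{\circ}$ controls the tail), and check that $\Gamma$ and $\Gamma_{A^{\frac{1}{2}}}$ really are defined on the products $|P_{r}f|^{2}$ occurring above — points best handled by first running the argument on a weak-$*$ dense subalgebra of regular elements and passing to the limit.
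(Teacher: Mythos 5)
Your argument is correct and is essentially the standard Meyer-type computation: the paper itself states this lemma without proof, citing P.~A.~Meyer and the noncommutative versions in \cite{mj12,mj10}, and your derivation (the derivative $\phi'(r)=-2T_{s-r}\Gamma(T_rf)$ for (i), and for (ii) the factorisation $\partial_v^2-A=(\partial_v-A^{\frac12})(\partial_v+A^{\frac12})$ combined with the identity $(\partial_v+A^{\frac12})|P_{s+v}f|^2=-2\Gamma_{A^{1/2}}(P_{s+v}f)$ and the reduction to $\mathcal M^{\circ}$ via Lemma \ref{gf}) is precisely the argument of those references. The only point to be careful about, which you rightly flag, is justifying that the boundary term at $v=\infty$ vanishes and that the improper integrals converge; on $\mathcal M^{\circ}$ this follows from Proposition \ref{ws} and the decay of $P_{s+v}f$, exactly as in \cite{mj12}.
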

From the second assertion in the above lemma, it is easy to check that $\Gamma^{2}\geq0$ actually implies that $\Gamma_{A^{\frac{1}{2}}}^{2}\geq0$, that is,  for any $r,s,t>0$ and $f\in\mathcal M$,
\begin{align}\label{Gamma12}P_{r} \Gamma_{A^{\frac{1}{2}}}\left(P_{s+t} f\right))\leq  P_{r+t} \Gamma_{A^{\frac{1}{2}}}\left(P_{s} f\right).\end{align} Indeed, firstly under the condition $\Gamma^{2}\geq0$, one can easily deduce that $P_r\Gamma(P_{s+t}f)\leq P_{r+t}\Gamma(P_{s}f)$
for any $r,s,t>0$ and $f\in\mathcal M$ via using \eqref{2} and \eqref{pm}. Then applying Lemma \ref{e.1}(ii) and the Kadison-Schwarz inequality in \eqref{1}, we get
\begin{align*}
  P_{r} \Gamma_{A^{\frac{1}{2}}}\left(P_{s+t} f\right)=&\int_{0}^{\infty} P_{r+v} \Gamma\left(P_{s+t+v} f\right) d v+\int_{0}^{\infty} P_{r+v}\left|\frac{\partial P_{s+t+v} f}{\partial v}\right|^{2} d v
\\  \leq& \int_{0}^{\infty} P_{r+t+v} \Gamma\left(P_{s+v} f\right) d v+\int_{0}^{\infty} P_{r+t+v}\left|\frac{\partial P_{s+v} f}{\partial v}\right|^{2} d v
\\ =& P_{r+t} \Gamma_{A^{\frac{1}{2}}}\left(P_{s} f\right).
\end{align*}

The proof of the following proposition follows the same steps of \rm{\cite[Proposition 2.4]{mj12}}, we include it here for the sake of completeness.
\begin{proposition}\label{e12}
Let $(P_{t})_{t\geq0}$ be the Markov semigroup associated to $(T_{t})_{t\geq0}$ and $f\in \mathcal{M}$.
\begin{itemize}
  \item [\rm{(i)}] For $0<\alpha<1$, we have
  \begin{align*}
 \|f\|_{\mathcal L_{\alpha}^{c}(\mathcal P)} \leq (2^{\alpha}+1)\|f\|_{\ell_{\alpha}^{c}(\mathcal P)}+\sup _{t>0}\frac{1}{t^{\alpha}}\left\|P_{t} f-P_{2 t} f\right\|_{\infty};
\end{align*}
  \item [\rm{(ii)}] If in addition $\Gamma^{2}\geq0$, we get for $0<\alpha<\frac{1}{2}$,
  \begin{align*}
  \|f\|_{\ell_{\alpha}^{c}(\mathcal P)}+\sup _{t>0}\frac{1}{t^{\alpha}}\left\|P_{t} f-P_{2 t} f\right\|_{\infty}\leq\left(\frac{\sqrt{2}}{1-2^{\alpha-\frac{1}{2}}}+1\right)\|f\|_{\mathcal L_{\alpha}^{c}(\mathcal P)}.
  \end{align*}
\end{itemize}
\end{proposition}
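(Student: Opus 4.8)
The plan is to prove the two parts separately: part (i) is elementary and uses only Kadison--Schwarz, while part (ii) is where $\Gamma^{2}\ge0$ and the restriction $\alpha<\frac12$ really enter. For part (i), note that $0<\alpha<1$ forces $[\alpha]+1=1$, so the $\mathcal{L}_{\alpha}^{c}(\mathcal P)$-seminorm is governed by $\sup_{t>0}t^{-\alpha}\|P_{t}|f-P_{t}f|^{2}\|_{\infty}^{1/2}$. I would fix $t>0$, write $f-P_{t}f=f+(-P_{t}f)$, and use the triangle inequality \eqref{lc} for the oscillation of the completely positive map $P_{t}$ (legitimate by Remark \ref{lk}) to get
\[
\big\|P_{t}|f-P_{t}f|^{2}-|P_{t}f-P_{2t}f|^{2}\big\|_{\infty}^{\frac12}\le\big\|P_{t}|f|^{2}-|P_{t}f|^{2}\big\|_{\infty}^{\frac12}+\big\|P_{t}|P_{t}f|^{2}-|P_{2t}f|^{2}\big\|_{\infty}^{\frac12}.
\]
By Kadison--Schwarz \eqref{1}, $P_{t}|P_{t}f|^{2}\le P_{2t}|f|^{2}$, so the last term is at most $\|P_{2t}|f|^{2}-|P_{2t}f|^{2}\|_{\infty}^{1/2}$; since $P_{t}|f-P_{t}f|^{2}$ differs from the quantity on the left above by $|P_{t}f-P_{2t}f|^{2}$, the bound $\|a+b\|^{1/2}\le\|a\|^{1/2}+\|b\|^{1/2}$ for positive $a,b$, together with dividing by $t^{\alpha}$, taking the supremum over $t>0$, adding $\|f\|_{\infty}$, and using $\|f\|_{\infty}\le\|f\|_{\ell_{\alpha}^{c}(\mathcal P)}$, produces exactly the claimed inequality with constant $2^{\alpha}+1$.

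For part (ii) there are two quantities to dominate by $\|f\|_{\mathcal{L}_{\alpha}^{c}(\mathcal P)}$. The term $\sup_{t>0}t^{-\alpha}\|P_{t}f-P_{2t}f\|_{\infty}$ is immediate: by \eqref{1}, $\|P_{t}f-P_{2t}f\|_{\infty}=\big\||P_{t}(I-P_{t})f|^{2}\big\|_{\infty}^{1/2}\le\|P_{t}|(I-P_{t})f|^{2}\|_{\infty}^{1/2}$, hence it is bounded by $\|f\|_{\dot{\mathcal L}_{\alpha}^{c}(\mathcal P)}$; this accounts for the ``$+1$''. For $\sup_{t>0}t^{-\alpha}\|P_{t}|f|^{2}-|P_{t}f|^{2}\|_{\infty}^{1/2}$ I would fix $t>0$ and an integer $M\ge1$ and use the exact finite identity $f=(I-P_{t})f+\sum_{m=1}^{M}(P_{2^{m-1}t}-P_{2^{m}t})f+P_{2^{M}t}f$, so that the iterated form of \eqref{lc} for $P_{t}$ bounds $\|P_{t}|f|^{2}-|P_{t}f|^{2}\|_{\infty}^{1/2}$ by $\|P_{t}|(I-P_{t})f|^{2}\|_{\infty}^{1/2}$ plus $\sum_{m=1}^{M}\|P_{t}|g_{m}|^{2}-|P_{t}g_{m}|^{2}\|_{\infty}^{1/2}$, where $g_{m}=(P_{2^{m-1}t}-P_{2^{m}t})f=P_{2^{m-1}t}(I-P_{2^{m-1}t})f$, plus the tail $\|P_{t}|P_{2^{M}t}f|^{2}-|P_{2^{M}t+t}f|^{2}\|_{\infty}^{1/2}$. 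The first summand is at most $\|P_{t}|(I-P_{t})f|^{2}\|_{\infty}^{1/2}\le t^{\alpha}\|f\|_{\dot{\mathcal L}_{\alpha}^{c}(\mathcal P)}$.

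The heart of the matter is the estimate, valid under $\Gamma^{2}\ge0$ for all $s,t>0$ and $h\in\mathcal M$,
\[
\big\|P_{t}|P_{s}h|^{2}-|P_{s+t}h|^{2}\big\|_{\infty}\ \lesssim\ \tfrac{t}{s}\,\big\|P_{s}|h|^{2}\big\|_{\infty}.
\]
To prove it I would apply Lemma \ref{e.1}(i) to the Markov semigroup $(P_{t})_{t\ge0}$ (with generator $A^{\frac{1}{2}}$ and Carr\'e du champ $\Gamma_{A^{\frac{1}{2}}}$): first, $P_{t}|P_{s}h|^{2}-|P_{s+t}h|^{2}=2\int_{0}^{t}P_{t-v}\Gamma_{A^{\frac{1}{2}}}(P_{s+v}h)\,dv$, and by \eqref{Gamma12} with $r=0$ one has $\Gamma_{A^{\frac{1}{2}}}(P_{s+v}h)\le P_{v}\Gamma_{A^{\frac{1}{2}}}(P_{s}h)$, so the integral is at most $2tP_{t}\Gamma_{A^{\frac{1}{2}}}(P_{s}h)$; second, $P_{s}|h|^{2}-|P_{s}h|^{2}=2\int_{0}^{s}P_{s-u}\Gamma_{A^{\frac{1}{2}}}(P_{u}h)\,du$ and, again by \eqref{Gamma12} with $r=0$, $\Gamma_{A^{\frac{1}{2}}}(P_{s}h)\le P_{s-u}\Gamma_{A^{\frac{1}{2}}}(P_{u}h)$ for $u\in[0,s]$, hence $\Gamma_{A^{\frac{1}{2}}}(P_{s}h)\le\frac{1}{2s}(P_{s}|h|^{2}-|P_{s}h|^{2})\le\frac{1}{2s}P_{s}|h|^{2}$; finally the contractivity of $P_{t}$ on $\mathcal M$ gives $\|P_{t}P_{s}|h|^{2}\|_{\infty}\le\|P_{s}|h|^{2}\|_{\infty}$. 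Taking $s=2^{m-1}t$ and $h=(I-P_{2^{m-1}t})f$ bounds the $m$-th summand above by $2^{-(m-1)/2}\|P_{2^{m-1}t}|(I-P_{2^{m-1}t})f|^{2}\|_{\infty}^{1/2}\le 2^{(m-1)(\alpha-\frac12)}t^{\alpha}\|f\|_{\dot{\mathcal L}_{\alpha}^{c}(\mathcal P)}$, while $s=2^{M}t$, $h=f$ shows the tail is $\le 2^{-M/2}\|f\|_{\infty}\to0$. Summing the geometric series $\sum_{m\ge0}2^{m(\alpha-\frac12)}$—which converges precisely because $\alpha<\frac12$ and contributes the factor $(1-2^{\alpha-\frac12})^{-1}$—and letting $M\to\infty$ gives $\sup_{t>0}t^{-\alpha}\|P_{t}|f|^{2}-|P_{t}f|^{2}\|_{\infty}^{1/2}\lesssim\|f\|_{\dot{\mathcal L}_{\alpha}^{c}(\mathcal P)}$ with a constant of the asserted form; combining with the easy term and $\|f\|_{\infty}\le\|f\|_{\mathcal{L}_{\alpha}^{c}(\mathcal P)}$ finishes (ii).

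I expect the displayed oscillation estimate to be the main obstacle: it requires $\Gamma^{2}\ge0$ not merely qualitatively but in the sharp quantitative form \eqref{Gamma12} for the subordinated Poisson semigroup, and one must arrange the dyadic telescoping so that the smoothness of $P_{2^{m-1}t}f$ at scale $2^{m-1}t$ beats the loss $2^{(m-1)\alpha}$ incurred when renormalizing the Campanato quantity back to scale $t$---this is exactly the balance $2^{(m-1)(\alpha-\frac12)}$ that forces $\alpha<\frac12$. A further point to watch is that each $g_{m}$-term is estimated directly in terms of $\|f\|_{\dot{\mathcal L}_{\alpha}^{c}(\mathcal P)}$ (through the Campanato quantity at scale $2^{m-1}t$) rather than in terms of the $\ell_{\alpha}^{c}(\mathcal P)$-quantity, so that no circular estimate arises and the finiteness of $\|f\|_{\ell_{\alpha}^{c}(\mathcal P)}$ is genuinely deduced rather than assumed.
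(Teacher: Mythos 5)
Your part (i) is essentially the paper's own argument: split off $|P_{t}(f-P_{t}f)|^{2}$, apply \eqref{lc} to $f+(-P_{t}f)$, use Kadison--Schwarz to replace $P_{t}|P_{t}f|^{2}-|P_{2t}f|^{2}$ by $P_{2t}|f|^{2}-|P_{2t}f|^{2}$, and rescale $2t\mapsto t$ to collect the factor $2^{\alpha}$; nothing to add there.

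Part (ii) is correct but follows a genuinely different route. The paper makes a \emph{single} application of Lemma \ref{e.1}(i) and \eqref{Gamma12}: it bounds $t^{-\alpha}\|P_{t}|f|^{2}-|P_{t}f|^{2}\|_{\infty}^{1/2}$ by $t^{-\alpha}\|P_{t}|f-P_{t}f|^{2}\|_{\infty}^{1/2}$ plus $2^{\alpha-\frac12}$ times the same oscillation quantity at scale $2t$ (via $2\int_{0}^{t}P_{t-s}\Gamma_{A^{1/2}}(P_{t+s}f)\,ds\le\int_{0}^{2t}P_{2t-v}\Gamma_{A^{1/2}}(P_{v}f)\,dv$), then takes the supremum and \emph{absorbs} the oscillation term using $2^{\alpha-\frac12}<1$. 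You instead telescope $f$ dyadically and prove the quantitative smoothing bound $\Gamma_{A^{1/2}}(P_{s}h)\le\frac{1}{2s}P_{s}|h|^{2}$, hence $\|P_{t}|P_{s}h|^{2}-|P_{s+t}h|^{2}\|_{\infty}\le\frac{t}{s}\|P_{s}|h|^{2}\|_{\infty}$, and sum the geometric series $\sum_{m}2^{m(\alpha-\frac12)}$; both routes consume $\alpha<\frac12$ at exactly the same point. Your route has one real advantage, which you correctly flag: the paper's absorption step $X\le C+2^{\alpha-\frac12}X$ is vacuous unless one knows a priori that $X=\sup_{t}t^{-\alpha}\|P_{t}|f|^{2}-|P_{t}f|^{2}\|_{\infty}^{1/2}<\infty$ (the crude bound $\sqrt2\,\|f\|_{\infty}t^{-\alpha}$ blows up as $t\to0$), so strictly one must first restrict to $t\ge\epsilon$ and let $\epsilon\to0$; your direct summation yields finiteness outright. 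Two small points. First, you invoke \eqref{Gamma12} with $r=0$, whereas the paper states it for $r>0$: in your first use this is avoidable (take $r=t-v>0$ inside the integral to get $P_{t-v}\Gamma_{A^{1/2}}(P_{s+v}h)\le P_{t}\Gamma_{A^{1/2}}(P_{s}h)$ directly), and in the second it suffices to insert $P_{\epsilon}$ in front, deduce $2sP_{\epsilon}\Gamma_{A^{1/2}}(P_{s}h)\le P_{\epsilon}P_{s}|h|^{2}$, and let $\epsilon\to0$ after composing with $P_{t-\epsilon}$ and taking norms. Second, your final constant is $2+(1-2^{\alpha-\frac12})^{-1}$; since $2^{\alpha-\frac12}>2^{-\frac12}>2-\sqrt2$ for $\alpha>0$, this is indeed $\le 1+\sqrt2\,(1-2^{\alpha-\frac12})^{-1}$, so the inequality as stated does follow.
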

\begin{proof}
 (\rm{i}). Let $0<\alpha<1$. We just need to show
\begin{align}\label{m1}
 \sup_{t>0} \frac{1}{t^{\alpha}} \left\|P_{t}|f-P_{t}f|^{2}\right\|_{\infty}^{\frac{1}{2}}
  \leq(2^{\alpha}+1)\sup_{t>0}\frac{1}{t^{\alpha}}\left\|P_{t}|f|^{2}-\left|P_{t} f\right|^{2}\right\|^{\frac{1}{2}}_{\infty}
  +\sup_{t>0}\frac{1}{t^{\alpha}}\left\|P_{t}f-P_{2t}f\right\|_{\infty}.
\end{align}
By the triangle inequality, one gets
\begin{align*}
  \frac{1}{t^{\alpha}} \left\|P_{t}|f-P_{t}f|^{2}\right\|_{\infty}^{\frac{1}{2}}
 \leq&\frac{1}{t^{\alpha}} \left\|P_{t}|f-P_{t}f|^{2}-|P_{t}(f-P_{t}f)|^{2}\right\|_{\infty}^{\frac{1}{2}}+\frac{1}{t^{\alpha}}\left\| P_{t}(f-P_{t}f)\right\|_{\infty}.
\end{align*}
Furthermore, applying the triangle inequality \eqref{lc} and the Kadison-Schwarz inequality \eqref{1}, we have
\begin{align*}
&\frac{1}{t^{\alpha}} \left\|P_{t}|f-P_{t}f|^{2}\right\|_{\infty}^{\frac{1}{2}}
\\ \leq&\frac{1}{t^{\alpha}}\left\|P_{t}|f|^{2}-\left|P_{t} f\right|^{2}\right\|^{\frac{1}{2}}_{\infty}+\frac{1}{t^{\alpha}} \left\|P_{t}|P_{t}f|^{2}-|P_{2t}f|^{2}\right\|_{\infty}^{\frac{1}{2}}
+\frac{1}{t^{\alpha}}\left\|P_{t}f-P_{2t}f\right\|_{\infty}
\\ \leq&
\frac{1}{t^{\alpha}}\left\|P_{t}|f|^{2}-\left|P_{t} f\right|^{2}\right\|^{\frac{1}{2}}_{\infty}+\frac{1}{t^{\alpha}} \left\|P_{2t}|f|^{2}-|P_{2t}f|^{2}\right\|_{\infty}^{\frac{1}{2}}
+\frac{1}{t^{\alpha}}\left\|P_{t}f-P_{2t}f\right\|_{\infty}
\\ \leq&(2^{\alpha}+1)\sup_{t>0}\frac{1}{t^{\alpha}}\left\|P_{t}|f|^{2}-\left|P_{t} f\right|^{2}\right\|^{\frac{1}{2}}_{\infty}+\sup_{t>0}\frac{1}{t^{\alpha}}\left\|P_{t}f-P_{2t}f\right\|_{\infty}.
\end{align*}
Taking the supremum over $t>0$ on the left-hand side yields the assertion \eqref{m1}.

$(\rm{ii})$ Using the triangle inequality \eqref{lc} and applying Lemma \ref{e.1} (i) to the subordinated Poisson semigroup, we have for $t>0$ fixed,
\begin{align*}
\frac{1}{t^{\alpha}}\left\|P_{t}|f|^{2}-\left|P_{t} f\right|^{2}\right\|^{\frac{1}{2}}_{\infty}
 \leq&\frac{1}{t^{\alpha}}\left\|P_{t}\left|f-P_{t} f\right|^{2}-\left|P_{t} f-P_{2 t} f\right|^{2}\right\|^{\frac{1}{2}}_{\infty}+\frac{1}{t^{\alpha}}\left\|P_{t}\left|P_{t} f\right|^{2}-\left|P_{t} (P_{t} f)\right|^{2}\right\|^{\frac{1}{2}}_{\infty}
\\
=&\frac{1}{t^{\alpha}}\left\|P_{t}\left|f-P_{t} f\right|^{2}-\left|P_{t} f-P_{2 t} f\right|^{2}\right\|^{\frac{1}{2}}_{\infty}+\frac{1}{t^{\alpha}}\left\|2\int_{0}^{t} P_{ t- s} \Gamma_{ A^{\frac{1}{2}}}\left(P_{t+ s} f\right) d s\right\|^{\frac{1}{2}}_{\infty}
\\=&:I+II.
\end{align*}
The first term is easy to handle,
\begin{align*}
I\leq \frac{1}{t^{\alpha}}\left\|P_{t}\left|f-P_{t} f\right|^{2}\right\|^{\frac{1}{2}}_{\infty}\leq\sup_{t>0}\frac{1}{t^{\alpha}}\left\|P_{t}\left|f-P_{t} f\right|^{2}\right\|^{\frac{1}{2}}_{\infty}.
\end{align*}
For the second term, by \eqref{Gamma12}, one gets
\begin{align*}
II\leq& \frac{1}{t^{\alpha}}\left\|2\int_{0}^{t} P_{2 t-2 s} \Gamma_{ A^{\frac{1}{2}}}\left(P_{2 s} f\right) d s\right\|^{\frac{1}{2}}_{\infty}
\\ (v=2s)=&  \frac{1}{t^{\alpha}}\left\|\int_{0}^{2 t} P_{2 t-v} \Gamma_{ A^{\frac{1}{2}}}\left(P_{v} f\right) d v\right\|^{\frac{1}{2}}_{\infty}
\\ =&2^{\alpha-\frac{1}{2}}\frac{1}{(2t)^{\alpha}}\left\|P_{2t}|f|^{2}-\left|P_{2t} f\right|^{2}\right\|^{\frac{1}{2}}_{\infty}
\\ \leq& 2^{\alpha-\frac{1}{2}}\sup_{t>0}\frac{1}{t^{\alpha}}\left\|P_{t}|f|^{2}-\left|P_{t} f\right|^{2}\right\|^{\frac{1}{2}}_{\infty}.
\end{align*}
Taking the supremum over $t>0$, we thus get
\begin{align*}
  \sup_{t>0}\frac{1}{t^{\alpha}}\left\|P_{t}|f|^{2}-\left|P_{t} f\right|^{2}\right\|^{\frac{1}{2}}_{\infty}\leq &\sup_{t>0}\frac{\sqrt{2}}{t^{\alpha}}\left\|P_{t}\left|f-P_{t} f\right|^{2}\right\|^{\frac{1}{2}}_{\infty}+2^{\alpha-\frac{1}{2}}\sup_{t>0}\frac{1}{t^{\alpha}}\left\|P_{t}|f|^{2}-\left|P_{t} f\right|^{2}\right\|^{\frac{1}{2}}_{\infty}.
\end{align*}
Since $2^{\alpha-\frac{1}{2}}<1$ for $0<\alpha<\frac{1}{2}$, we then have
\begin{align*}
 \sup_{t>0}\frac{1}{t^{\alpha}}\left\|P_{t}|f|^{2}-\left|P_{t} f\right|^{2}\right\|^{\frac{1}{2}}_{\infty}\leq \frac{\sqrt{2}}{1-2^{\alpha-\frac{1}{2}}} \sup_{t>0}\frac{1}{t^{\alpha}}\left\|P_{t}\left|f-P_{t} f\right|^{2}\right\|^{\frac{1}{2}}_{\infty}.
\end{align*}
Thus, we obtain
\begin{align*}
   \|f\|_{\ell_{\alpha}^{c}(\mathcal P)}\leq\frac{\sqrt{2}}{1-2^{\alpha-\frac{1}{2}}}\sup_{t>0}\frac{1}{t^{\alpha}}\left\|P_{t}\left|f-P_{t} f\right|^{2}\right\|^{\frac{1}{2}}_{\infty}+\|f\|_{\infty}\leq\frac{\sqrt{2}}{1-2^{\alpha-\frac{1}{2}}}\|f\|_{\mathcal L_{\alpha}^{c}(\mathcal P)}.
\end{align*}
On the other hand, for any $t>0$,
\begin{align*}
 \frac{1}{t^{\alpha}}\left\|P_{t}(f-P_{t}f)\right\|_{\infty}\leq\frac{1}{t^{\alpha}}\left\|P_{t}|f-P_{t}f|^{2}\right\|^{\frac{1}{2}}_{\infty}\leq\|f\|_{\mathcal L_{\alpha}^{c}(\mathcal P)}.
 \end{align*}
\end{proof}
We will need to introduce the following quantity:
\begin{align*}
&\left\|f\right\|_{\mathcal L_{\alpha}^{c}(\partial)}=\|f\|_{\infty}+\sup_{t>0}\frac{1}{t^{\alpha}} \left\|P_{t}\int^{t}_{0}\left|\frac{\partial P_{s}f}{\partial s}\right|^{2}sds\right\|^{\frac{1}{2}}_{\infty}.
\end{align*}
In the case of $\alpha=0$, the quantity $\sup_{t>0}\left\|P_{t}\int^{t}_{0}|\frac{\partial P_{s}f}{\partial s}|^{2}sds\right\|^{\frac{1}{2}}_{\infty}$ is a Carleson measure BMO norm associated to semigroup introduced in \rm{\cite[eg. (2.4)]{mj12}}. Therefore, we obtain that, for any $\alpha>0$, $\left\|\cdot\right\|_{\mathcal L_{\alpha}^{c}(\partial)}$ are a norms on $\mathcal M$.
\begin{lemma}\label{e14}
Let $(T_{t})_{t\geq0}$ be a Markov semigroup, $(P_{t})_{t\geq0}$ its associated Poisson semigroup and $f\in \mathcal{M}$. Then, for $0<\alpha<1$,
 $$\sup_{t>0} \frac{1}{t^{\alpha}}\left\|P_{t} f-P_{2 t} f\right\|_{\infty}\lesssim_{\alpha}\left\|f\right\|_{\mathcal L_{\alpha}^{c}(\partial)} \lesssim\|f\|_{\ell_{\alpha}^{c}(\mathcal P)}.$$
\end{lemma}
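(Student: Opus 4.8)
The plan is to prove the two halves separately; throughout write $g_t:=\int_0^t\bigl|\frac{\partial P_sf}{\partial s}\bigr|^2\,s\,ds\in\mathcal M_+$, so that the homogeneous part of $\|f\|_{\mathcal L_\alpha^c(\partial)}$ is $\sup_{t>0}t^{-\alpha}\|P_tg_t\|_\infty^{1/2}$. For the first inequality the heart of the matter is the pointwise bound $\bigl|\frac{\partial P_tf}{\partial t}\bigr|^2\le \frac{16}{t^2}\,P_{t/2}\,g_{t/2}$, which I would obtain as follows: for fixed $s\in(0,t/2]$ write $\frac{\partial P_tf}{\partial t}=P_{t-s}\bigl(\frac{\partial P_uf}{\partial u}\big|_{u=s}\bigr)$, apply the Kadison--Schwarz inequality \eqref{1}, and then use the monotonicity \eqref{4} in the form $P_{t-s}h\le\frac{t-s}{t/2}P_{t/2}h\le 2P_{t/2}h$ (valid since $t/2\le t-s\le t$); multiplying by $s$, integrating over $s\in(0,t/2]$ and using $\int_0^{t/2}s\,ds=t^2/8$ gives the claim. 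Taking $\|\cdot\|_\infty$ and unwinding the definition of $\|f\|_{\mathcal L_\alpha^c(\partial)}$ yields $\bigl\|\frac{\partial P_tf}{\partial t}\bigr\|_\infty\le \frac{4}{2^\alpha}\,t^{\alpha-1}\|f\|_{\mathcal L_\alpha^c(\partial)}$; then, since $P_tf-P_{2t}f=-\int_t^{2t}\frac{\partial P_sf}{\partial s}\,ds$, the triangle inequality and $\int_t^{2t}s^{\alpha-1}\,ds=\frac{2^\alpha-1}{\alpha}t^\alpha$ give $t^{-\alpha}\|P_tf-P_{2t}f\|_\infty\lesssim_\alpha\|f\|_{\mathcal L_\alpha^c(\partial)}$ for every $t>0$.

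For the second inequality I would apply Lemma \ref{e.1}(i) to the Markov semigroup $(P_t)_{t\ge0}$ (whose generator is $A^{1/2}$) to get $P_t|f|^2-|P_tf|^2=2\int_0^tP_{t-s}\Gamma_{A^{1/2}}(P_sf)\,ds$, and then use Lemma \ref{e.1}(ii) together with $\Gamma\ge0$ to bound $P_{t-s}\Gamma_{A^{1/2}}(P_sf)\ge\int_0^\infty P_{t-s+v}\bigl|\frac{\partial P_uf}{\partial u}\big|_{u=s+v}\bigr|^2\,dv$. Substituting $w=s+v$, interchanging the order of the $s$- and $w$-integrations, and then restricting to the sub-domains $w\in(0,t]$ and (after a further substitution $r=t-2s+w$) $r\in[t-w,t]$ keeps every Poisson parameter that appears $\le t$; invoking \eqref{4} once more to replace $P_r$ by $\frac{r}{t}P_t$ and performing the elementary $r$- and $w$-integrations produces $P_t|f|^2-|P_tf|^2\ge\frac12\,P_tg_t$. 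Consequently $\|P_tg_t\|_\infty\le 2\|P_t|f|^2-|P_tf|^2\|_\infty$ for all $t>0$, so dividing by $t^{2\alpha}$, taking the supremum and adding $\|f\|_\infty$ gives $\|f\|_{\mathcal L_\alpha^c(\partial)}\le(1+\sqrt2)\|f\|_{\ell_\alpha^c(\mathcal P)}$.

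The one genuinely delicate point is the bookkeeping of Poisson parameters in the second step: since $(P_t)$ is only quasi-\emph{decreasing}, the estimate \eqref{4} runs in only one direction, so the domains of integration must be chosen so that every $P_\bullet$ occurring has $\bullet\le t$ before \eqref{4} is used (and dually $\bullet\ge t/2$ in the first step); a careless splitting yields instead a term such as $P_{2t}g_{t/2}$, which cannot be compared with $P_tg_t$. Apart from this the two estimates are routine manipulations with Kadison--Schwarz and Fubini; in particular the proof uses only $\Gamma\ge0$ (never $\Gamma^2\ge0$) and is valid on the whole range $0<\alpha<1$.
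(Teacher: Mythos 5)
Your proposal is correct, and both halves go through with the constants you indicate. The second inequality is essentially the paper's argument run in the opposite direction: the paper starts from $P_tg_t$, enlarges the coefficient $v$ to $2tv/(t+v)$, extends and rewrites the domain so that \eqref{4} pushes every Poisson parameter \emph{up} to the form $P_{t-v+2u}$, and then reassembles $2\int_0^tP_{t-s}\Gamma_{A^{1/2}}(P_sf)\,ds=P_t|f|^2-|P_tf|^2$ via Lemma \ref{e.1}(ii) and Fubini; you instead start from $P_t|f|^2-|P_tf|^2$, discard the $\Gamma$-term of Lemma \ref{e.1}(ii), and shrink the domain so that \eqref{4} pulls every parameter \emph{down} to $P_t$. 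The ingredients (Lemma \ref{e.1}(i)--(ii) for the subordinated semigroup, positivity of $\Gamma$, Fubini, and the one-sided monotonicity \eqref{4}) are identical, and your remark about the one-directional nature of \eqref{4} is exactly the delicate point the paper's domain manipulations are designed to handle. For the first inequality your route is genuinely different in organization: the paper splits $P_tf-P_{2t}f$ into the two half-step differences $P_tf-P_{3t/2}f$ and $P_{3t/2}f-P_{2t}f$, applies Kadison--Schwarz to each and uses $s\geq t/4$ inside the resulting integrals, whereas you first prove the pointwise bound $\bigl|\frac{\partial P_tf}{\partial t}\bigr|^2\leq\frac{16}{t^2}P_{t/2}g_{t/2}$ and then integrate over $[t,2t]$. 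Your version is more modular (it yields a Lipschitz-type control of $\partial_tP_tf$ by the Carleson-type norm as a byproduct), at the cost of one extra application of \eqref{4}; both are correct.
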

 \begin{proof}
 To prove the first inequality, for one $t>0$ fixed, we use the Kadison-Schwarz inequality \eqref{1} to get
 \begin{align*}
  \frac{1}{t^{\alpha}}\left\|P_{t} f-P_{2 t} f\right\|_{\infty}=&\frac{1}{t^{\alpha}}\left\|P_{t} f-P_{\frac{3t}{2}}f+P_{\frac{3t}{2}}f-P_{2 t} f\right\|_{\infty}
  \\ \nonumber\leq&
 \frac{1}{t^{\alpha}} \left\|P_{t} f-P_{\frac{3t}{2}}f \right\|_{\infty}+\frac{1}{t^{\alpha}}\left\|P_{\frac{3t}{2}}f-P_{2 t} f\right\|_{\infty}
\\ \nonumber\leq&
 \frac{1}{t^{\alpha}} \left\|P_{\frac{3t}{4}}|P_{\frac{t}{4}} f-P_{\frac{3t}{4}}f |^{2} \right\|^{\frac{1}{2}}_{\infty}+\frac{1}{t^{\alpha}}\left\|P_{ t}|P_{\frac{t}{2}}f-P_{t} f|^{2}\right\|^{\frac{1}{2}}_{\infty}
 \\ \nonumber=&
 \frac{1}{t^{\alpha}} \left\|P_{\frac{3t}{4}}\left|\int^{\frac{3t}{4}}_{\frac{t}{4}}\frac{\partial P_{s}f}{\partial s} ds\right|^{2} \right\|^{\frac{1}{2}}_{\infty}+\frac{1}{t^{\alpha}}\left\|P_{ t}\left|\int^{t}_{\frac{t}{2}}\frac{\partial P_{s}f}{\partial s} ds\right|^{2}\right\|^{\frac{1}{2}}_{\infty}.
\end{align*}
Using the Kadison-Schwarz inequality \eqref{1.1}, we have
\begin{align*}
\frac{1}{t^{\alpha}}\left\|P_{t} f-P_{2 t} f\right\|_{\infty}\leq&
 \frac{1}{t^{\alpha}} \left\|P_{\frac{3t}{4}}
\int^{\frac{3t}{4}}_{\frac{t}{4}} \frac{t}{2}\left|\frac{\partial P_{s}f}{\partial s} \right|^{2} ds\right\|^{\frac{1}{2}}_{\infty}+\frac{1}{t^{\alpha}}\left\|P_{ t}\int^{t}_{\frac{t}{2}} \frac{t}{2}\left|\frac{\partial P_{s}f}{\partial s}\right|^{2} ds\right\|^{\frac{1}{2}}_{\infty}
\\ \nonumber\leq&
 \frac{1}{t^{\alpha}} \left\|P_{\frac{3t}{4}}
\int^{\frac{3t}{4}}_{\frac{t}{4}}2\left|\frac{\partial P_{s}f}{\partial s} \right|^{2} sds\right\|^{\frac{1}{2}}_{\infty}+\frac{1}{t^{\alpha}}\left\|P_{ t}\int^{t}_{\frac{t}{2}}\left|\frac{\partial P_{s}f}{\partial s}\right|^{2}s ds\right\|^{\frac{1}{2}}_{\infty}
\\ \nonumber\leq&
 (\frac{3}{4})^{\alpha}\sqrt{2}\frac{1}{(\frac{3t}{4})^{\alpha}} \left\|P_{\frac{3t}{4}}
\int^{\frac{3t}{4}}_{0}\left|\frac{\partial P_{s}f}{\partial s} \right|^{2} sds\right\|^{\frac{1}{2}}_{\infty}+\frac{1}{t^{\alpha}}\left\|P_{ t}\int^{t}_{0}\left|\frac{\partial P_{s}f}{\partial s}\right|^{2}s ds\right\|^{\frac{1}{2}}_{\infty}
\\ \nonumber\leq&(2^{\frac{1}{2}-2\alpha}3^{\alpha}+1)\left\|f\right\|_{\mathcal L_{\alpha}^{c}(\partial)}.
\end{align*}
Taking the supremum over $t>0$ on the left-hand side, we get the desired result.

We now prove the second inequality. Fix $s,t>0$.
Let $$P_{t}\hat{\Gamma}\left(P_{s} f\right)=P_{t}\Gamma\left(P_{s} f\right)+P_{t}\left|\frac{\partial P_{s}f}{\partial s}\right|^{2}.$$
Applying the Kadison-Schwarz inequality \eqref{1}, we deduce
\begin{align*}
 \frac{1}{t^{\alpha}} \left\|P_{t}\int^{t}_{0}\left|\frac{\partial P_{s}f}{\partial s}\right|^{2}sds\right\|^{\frac{1}{2}}_{\infty}\leq&\frac{1}{t^{\alpha}} \left\|\int^{t}_{0}P_{t+\frac{s}{2}}\left|\frac{\partial P_{\frac{s}{2}}f}{\partial s}\right|^{2}sds\right\|^{\frac{1}{2}}_{\infty}
  \\ \nonumber(\frac{s}{2}=v)=&
  \frac{2}{t^{\alpha}} \left\|\int^{\frac{t}{2}}_{0}P_{t+v}\left|\frac{\partial P_{v}f}{\partial v}\right|^{2}vdv\right\|^{\frac{1}{2}}_{\infty}
\\ \nonumber\leq&
  \frac{2}{t^{\alpha}} \left\|\int^{t}_{0}P_{t+v}\left|\frac{\partial P_{v}f}{\partial v}\right|^{2}vdv\right\|^{\frac{1}{2}}_{\infty}\\
  \leq&
  \frac{2}{t^{\alpha}} \left\|2\int^{t}_{0}\frac{(t-v)v+v^{2}}{t+v}P_{t+v}\hat{\Gamma}\left( P_{v} f\right)dv\right\|^{\frac{1}{2}}_{\infty},
  \end{align*}
  which is trivially smaller than
  \begin{align*}
  &\frac{2}{t^{\alpha}} \left\|2\int^{\infty}_{0}\frac{(t-v)v+v^{2}-(t-v)\max\{0,v-t\}-\max\{0,v-t\}^{2}}{t+v}P_{t+v}\hat{\Gamma}\left( P_{v} f\right)dv\right\|^{\frac{1}{2}}_{\infty}
\\ \nonumber=&
  \frac{2}{t^{\alpha}} \left\|2\int^{\infty}_{0}\int^{v}_{\max\{0,v-t\}}\frac{t-v+2u}{t+v}duP_{t+v}\hat{\Gamma}\left( P_{v} f\right)dv\right\|^{\frac{1}{2}}_{\infty}
 \\ \nonumber\leq&
  \frac{2}{t^{\alpha}} \left\|2\int^{\infty}_{0}\int^{v}_{\max\{0,v-t\}}P_{t-v+2u}\hat{\Gamma}\left( P_{v} f\right)dudv\right\|^{\frac{1}{2}}_{\infty}.
\end{align*}
In the last inequality, we have used the monotonicity \eqref{4} of Poisson semigroup.
Then, by twice the Fubini theorem and using the formula $\Gamma_{A^{\frac{1}{2}}}\left(P_{s} f\right)$ appeared in Lemma \ref{e.1}(ii), we obtain that
\begin{align*}
 \frac{1}{t^{\alpha}} \left\|P_{t}\int^{t}_{0}\left|\frac{\partial P_{s}f}{\partial s}\right|^{2}sds\right\|^{\frac{1}{2}}_{\infty}
 \leq&
  \frac{2}{t^{\alpha}} \left\|2\int^{\infty}_{0}\int^{u+t}_{u}P_{t-v+2u}\hat{\Gamma}\left( P_{v} f\right)dvdu\right\|^{\frac{1}{2}}_{\infty}
  \\ \nonumber(v=s+u)=&
  \frac{2}{t^{\alpha}} \left\|2\int^{t}_{0}\int^{\infty}_{0}P_{t-s+u}\hat{\Gamma}\left(P_{s+u} f\right)duds\right\|^{\frac{1}{2}}_{\infty}
 \\ \nonumber
=&
  \frac{2}{t^{\alpha}} \left\|2\int^{t}_{0}P_{t-s}\Gamma_{A^{\frac{1}{2}}}\left(P_{s} f\right)ds\right\|^{\frac{1}{2}}_{\infty}
\\ \nonumber=&
  \frac{2}{t^{\alpha}} \left\|P_{t}|f|^{2}-|P_{t}f|^{2}\right\|^{\frac{1}{2}}_{\infty}.
\end{align*}
This implies
\begin{align*}
\left\|f\right\|_{\mathcal L_{\alpha}^{c}(\partial
)}
\leq&
  2 \left\|f\right\|_{\ell_{\alpha}^{c}(\mathcal{P})}.
\end{align*}
\end{proof}
By Proposition \ref{e12} and Lemma \ref{e14}, the following result comes out naturally.

 \begin{theorem}\label{e15}
  Let $(T_{t})_{t\geq0}$ be a Markov semigroup and $(P_{t})_{t\geq0}$ be the associated Poisson semigroup satisfying $\Gamma^{2}\geq0$. Then, for any $0<\alpha<\frac{1}{2}$, the norms $\|f\|_{\mathcal L_{\alpha}^{\dag}(\mathcal P)}$ and $\left\|f\right\|_{\ell_{\alpha}^{\dag}(\mathcal{P})}$ are equivalent on $\mathcal{M}$, where $\dag=\{c,r,cr\}$.
 \end{theorem}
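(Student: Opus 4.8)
The statement will follow by simply assembling Proposition \ref{e12} and Lemma \ref{e14}; no genuinely new argument is required, and the $\Gamma^2\geq0$ hypothesis enters only through those two results (via Lemma \ref{e.1} and the consequence \eqref{Gamma12}). The plan is to first establish the norm equivalence in the column case $\dag=c$, and then to obtain the cases $\dag=r$ and $\dag=cr$ formally by passing to adjoints and taking maxima.

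Fix $0<\alpha<\frac12$ and $f\in\mathcal M$. For the direction $\|f\|_{\ell_\alpha^c(\mathcal P)}\lesssim_\alpha\|f\|_{\mathcal L_\alpha^c(\mathcal P)}$, I would invoke Proposition \ref{e12}(ii): using $\Gamma^2\geq0$ it gives
$$\|f\|_{\ell_\alpha^c(\mathcal P)}\le\|f\|_{\ell_\alpha^c(\mathcal P)}+\sup_{t>0}\frac1{t^\alpha}\|P_tf-P_{2t}f\|_\infty\le\Bigl(\frac{\sqrt2}{1-2^{\alpha-\frac12}}+1\Bigr)\|f\|_{\mathcal L_\alpha^c(\mathcal P)}.$$
For the reverse direction I would use that Lemma \ref{e14} (valid already for $0<\alpha<1$) controls the ``difference'' term by the little Campanato norm,
$$\sup_{t>0}\frac1{t^\alpha}\|P_tf-P_{2t}f\|_\infty\lesssim_\alpha\|f\|_{\mathcal L_\alpha^c(\partial)}\lesssim\|f\|_{\ell_\alpha^c(\mathcal P)},$$
and then feed this into Proposition \ref{e12}(i) to obtain
$$\|f\|_{\mathcal L_\alpha^c(\mathcal P)}\le(2^\alpha+1)\|f\|_{\ell_\alpha^c(\mathcal P)}+\sup_{t>0}\frac1{t^\alpha}\|P_tf-P_{2t}f\|_\infty\lesssim_\alpha\|f\|_{\ell_\alpha^c(\mathcal P)}.$$
The two displays together give $\mathcal L_\alpha^c(\mathcal P)=\ell_\alpha^c(\mathcal P)$ with equivalent norms.

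The remaining two cases are then immediate. Since $P_t(g^\ast)=(P_tg)^\ast$, applying the column equivalence just obtained to $f^\ast$ in place of $f$ shows that $\|\cdot\|_{\mathcal L_\alpha^r(\mathcal P)}$ and $\|\cdot\|_{\ell_\alpha^r(\mathcal P)}$ are equivalent on $\mathcal M$, and the case $\dag=cr$ follows by taking the maximum of the column and row equivalences. I do not expect any real obstacle in carrying this out: the one delicate point, already built into Proposition \ref{e12}(ii), is that the constant $\frac{\sqrt2}{1-2^{\alpha-1/2}}$ is finite exactly when $\alpha<\frac12$, which is why the range of $\alpha$ cannot be enlarged here without a new idea.
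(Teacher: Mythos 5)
Your proposal is correct and follows exactly the route the paper intends: the paper itself derives Theorem \ref{e15} by combining Proposition \ref{e12}(i)--(ii) with Lemma \ref{e14} in precisely this way, and the row and mixture cases are indeed formal consequences via adjoints and maxima.
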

Together with Theorem \ref{e15} and Theorem \ref{b3}(i), we obtain Theorem \ref{b3}(ii).

\section{$\mathcal{L}_{\alpha}^{c}$ associated with general semigroups}
\hskip\parindent
In this section, we introduce the Campanato/Lipschitz spaces associated to the semigroup $\mathcal T=(T_{t})_{t\geq0}$ itself and explore their connection with the ones associated to its subordinated Poisson semigroup. 

Let $(T_{t})_{t\geq0}$ be a Markov semigroup acting on a finite von Neumann algebra $\mathcal{M}$ and $(P_{t})_{t\geq0}$ be its subordinated semigroup. For $0<\alpha<1$ and $f\in \mathcal{M}$, the spaces $\mathcal{L}_{\alpha}^{c}(\mathcal T)$, $\ell_{\alpha}^{c}(\mathcal P)$ and $\Lambda_{\alpha}(\mathcal{T})$ can be defined as
\begin{align*}
   &\mathcal{L}_{\alpha}^{c}(\mathcal T)=\left\{f\in\mathcal{M}: \,\, \left\|f\right\|_{\mathcal{L}_{\alpha}^{c}(\mathcal T)}<\infty\right\},
  \\&    \ell_{\alpha}^{c}(\mathcal T)=\left\{f\in \mathcal{M}: \|f\|_{\ell_{\alpha}^{c}(\mathcal T)}<\infty\right\},
  \\& \Lambda_{\alpha}(\mathcal{T})=\left\{f\in\mathcal{M}:\,\,\,\|f\|_{\Lambda_{\alpha}(\mathcal{T})}<\infty\right\},
 \end{align*}
 where
 \begin{align*}
 &\left\|f\right\|_{\mathcal{L}_{\alpha}^{c}(\mathcal T)}=\|f\|_{\infty}+\sup_{t>0}\frac{1}{t^{\alpha}} \left\|T_{t}|f-T_{t}f|^{2}\right\|_{\infty}^{\frac{1}{2}}.
\\&\left\|f\right\|_{\ell_{\alpha}^{c}(\mathcal T)}=\|f\|_{\infty}+\sup_{t>0}\frac{1}{t^{\alpha}} \left\|T_{t}|f|^{2}-|T_{t}f|^{2}\right\|_{\infty}^{\frac{1}{2}}.
\\&   \left\|f\right\|_{\Lambda_{\alpha}({\mathcal T})}=\|f\|_{\infty}+\sup_{t>0}\frac{1}{t^{\alpha-1}} \left\|\frac{\partial T_{t}f}{\partial t}\right\|_{\infty}.
\end{align*}
Similar to $\left\|\cdot\right\|_{\mathcal{L}_{\alpha}^{c}(\mathcal P)}$, $\left\|\cdot\right\|_{\Lambda_{\alpha}({\mathcal P})}$ and $\left\|\cdot\right\|_{\ell_{\alpha}^{c}(\mathcal P)}$, we know that $ \left\|\cdot\right\|_{\mathcal{L}_{\alpha}^{c}(\mathcal T)}$, $\left\|\cdot\right\|_{\Lambda_{\alpha}({\mathcal T})}$ and $ \left\|\cdot\right\|_{\ell_{\alpha}^{c}(\mathcal T)}$ are norms on $\mathcal M$, too.

As before, one defines the row norms and the resulting row/mixture spaces  $\mathcal{L}_{\alpha}^{r}(\mathcal T)$, $\ell_{\alpha}^{r}(\mathcal T)$, $\mathcal{L}_{\alpha}^{cr}(\mathcal T)$ and $\ell_{\alpha}^{cr}(\mathcal T)$.

\begin{proposition}
Let $0<\alpha<1$. The spaces $\mathcal{L}_{\alpha}^{\dag}(\mathcal T)$ and $ \ell_{\alpha}^{\dag}(\mathcal T)$ are Banach spaces, where $(\dag=c,r,cr)$.
\end{proposition}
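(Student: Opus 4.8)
The plan is to transcribe, almost verbatim, the completeness arguments already carried out for the Poisson case in Proposition \ref{5111} and Proposition \ref{p9}, with $(T_t)_{t\geq0}$ in place of $(P_t)_{t\geq0}$ and with the role of the derivative estimate of Proposition \ref{ws} taken over by the bare Kadison--Schwarz inequality \eqref{1} together with the contractivity \eqref{5151}. Since the fact that $\|\cdot\|_{\mathcal L_\alpha^c(\mathcal T)}$ and $\|\cdot\|_{\ell_\alpha^c(\mathcal T)}$ are norms has already been recorded just before the statement (subadditivity of the seminorm parts being \eqref{lc}, valid for the unital completely positive map $T_t$ by Remark \ref{lk} in the little-Campanato case), only completeness has to be shown, and it suffices to treat $\dag=c$: the row case follows because $f\mapsto f^{\ast}$ is an isometric bijection of $\mathcal L_\alpha^c(\mathcal T)$ onto $\mathcal L_\alpha^r(\mathcal T)$ (and of $\ell_\alpha^c(\mathcal T)$ onto $\ell_\alpha^r(\mathcal T)$) and is a homeomorphism of $\mathcal M$, while the mixture case is the intersection of two complete spaces endowed with the intersection norm.

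First I would isolate the single-scale bound that plays here the role of \eqref{1615}: for all $t>0$ and $g\in\mathcal M$,
\begin{align*}
\frac{1}{t^\alpha}\left\|T_t\,|g-T_tg|^2\right\|_\infty^{\frac12}\leq\frac{1}{t^\alpha}\left\|g-T_tg\right\|_\infty\leq\frac{2}{t^\alpha}\|g\|_\infty ,
\end{align*}
using \eqref{1} and \eqref{5151}, and likewise $\frac{1}{t^\alpha}\|T_t|g|^2-|T_tg|^2\|_\infty^{\frac12}\leq\frac{1}{t^\alpha}\|g\|_\infty$, since $0\leq T_t|g|^2-|T_tg|^2\leq T_t|g|^2$ and $\|T_t|g|^2\|_\infty\leq\|g\|_\infty^2$. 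In particular these quantities tend to $0$ in a controlled way as $t\to0$ once $\|g\|_\infty$ is small relative to $t^\alpha$.

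Next, given a Cauchy sequence $\{f_n\}_{n\geq1}$ in $\mathcal L_\alpha^{c}(\mathcal T)$ (resp. $\ell_\alpha^{c}(\mathcal T)$), it is Cauchy in $\mathcal M$ because $\|\cdot\|_\infty$ is dominated by the ambient norm, so $f_n\to f$ in operator norm for a unique $f\in\mathcal M$, and the norms $\|f_n\|_{\mathcal L_\alpha^c(\mathcal T)}$ are uniformly bounded. To see $f$ belongs to the space, fix $t>0$, choose $m$ with $\|f-f_m\|_\infty<t^\alpha$, write $f=(f-f_m)+f_m$, apply the triangle inequality for the seminorm, the single-scale bound to the $(f-f_m)$-term and $\|f_m\|_{\mathcal L_\alpha^c(\mathcal T)}\leq\sup_n\|f_n\|_{\mathcal L_\alpha^c(\mathcal T)}<\infty$ to the other term; taking $\sup_{t>0}$ gives finiteness of the seminorm, hence $f\in\mathcal L_\alpha^{c}(\mathcal T)$. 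To obtain $f_n\to f$ in the Campanato norm, fix $\epsilon,t>0$, pick $N_1$ with $\|f_{n_1}-f_{n_2}\|_{\mathcal L_\alpha^c(\mathcal T)}<\epsilon$ for $n_1,n_2\geq N_1$, then $N_2$ with $\|f-f_n\|_\infty<\epsilon$ for $n\geq N_2$, and $m\geq\max\{N_1,N_2\}$ with $\|f-f_m\|_\infty<t^\alpha\epsilon$; then for $n\geq\max\{N_1,N_2\}$ split $f-f_n=(f-f_m)+(f_m-f_n)$, estimate the first term by the single-scale bound ($\lesssim\epsilon$) and the second by $\|f_m-f_n\|_{\mathcal L_\alpha^c(\mathcal T)}<\epsilon$, and let $t$ vary to conclude $\|f-f_n\|_{\mathcal L_\alpha^c(\mathcal T)}\lesssim\epsilon$. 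The little-Campanato case is identical, using \eqref{lc} together with Remark \ref{lk} as the substitute for subadditivity.

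No genuinely new difficulty arises here: the argument is a direct analogue of the proofs of Proposition \ref{5111} and Proposition \ref{p9}, and the only points to watch are that the single-scale control now comes straight from \eqref{1} and \eqref{5151} (there is no need for the derivative estimate \eqref{a1}), and that the triangle inequality for the $\ell_\alpha^c(\mathcal T)$-seminorm is guaranteed by Remark \ref{lk}, i.e. by the validity of \eqref{lc} for the unital completely positive operator $T_t$ rather than only for $P_t$.
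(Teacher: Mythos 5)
Your proof is correct and is exactly the argument the paper intends: the proposition is stated without proof precisely because it is the verbatim adaptation of Propositions \ref{5111} and \ref{p9}, and you correctly identify that the single-scale bound now follows directly from \eqref{1} and \eqref{5151} (no derivative estimate needed) and that \eqref{lc} applies to $T_t$ via Remark \ref{lk}.
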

\begin{lemma}\label{q1}
Suppose $(T_{t})_{t\geq0}$ is quasi-monotone. Then, for $0<\alpha<1$,
\begin{align*}
\|f\|_{\mathcal L_{\alpha}^{c}(\mathcal P)} \simeq \|f\|_{\infty}+\sup_{t}\frac{1}{t^{\alpha}}\left\|T_{t^{2}}|f-P_{t}f|^{2}\right\|_{\infty}^{\frac{1}{2}}.
\end{align*}
\end{lemma}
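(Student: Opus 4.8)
The plan is to prove the two inequalities implicit in $\simeq$ separately, both resting on one preliminary comparison. \textbf{Step 0.} Using the subordination formula \eqref{2} together with quasi-monotonicity of $(T_t)_{t\ge0}$, I would show that there is a constant $C>0$, depending only on the quasi-monotonicity exponent $\beta$, with
\[
T_{t^2}g\le C\,P_tg\qquad\text{for all }g\in\mathcal M_+,\ t>0 .
\]
Indeed, if $(T_t)$ is quasi-decreasing with exponent $\beta$ then $T_ug\ge(u/t^2)^\beta T_{t^2}g$ for $u\le t^2$, so keeping only the range $u\in(0,t^2]$ in \eqref{2} and substituting $u=t^2w$ gives $P_tg\ge\big(\tfrac1{2\sqrt\pi}\int_0^1 e^{-1/4w}w^{\beta-3/2}\,dw\big)T_{t^2}g$, the bracket being a positive finite constant; if $(T_t)$ is quasi-increasing one keeps the range $u\ge t^2$, where $T_ug\ge(u/t^2)^\beta T_{t^2}g\ge T_{t^2}g$, and argues in the same way. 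Granting Step 0, the inequality $\|f\|_\infty+\sup_{t>0}t^{-\alpha}\|T_{t^2}|f-P_tf|^2\|_\infty^{1/2}\lesssim\|f\|_{\mathcal L_\alpha^c(\mathcal P)}$ is immediate: since $0<\alpha<1$ we have $[\alpha]+1=1$, so applying Step 0 with $g=|(I-P_t)f|^2$ and monotonicity of the norm gives $t^{-\alpha}\|T_{t^2}|(I-P_t)f|^2\|_\infty^{1/2}\le\sqrt C\,t^{-\alpha}\|P_t|(I-P_t)f|^2\|_\infty^{1/2}$.

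The reverse estimate $\|f\|_{\mathcal L_\alpha^c(\mathcal P)}\lesssim_\alpha\|f\|_\infty+B$, where $B:=\sup_{t>0}t^{-\alpha}\|T_{t^2}|(I-P_t)f|^2\|_\infty^{1/2}$, is the substantive direction. Write, via \eqref{2}, $P_t|(I-P_t)f|^2=\int_0^\infty T_u|(I-P_t)f|^2\,d\mu_t(u)$ with $d\mu_t(u)=\tfrac1{2\sqrt\pi}t e^{-t^2/4u}u^{-3/2}du$ a probability measure, and split the integral at $u=t^2$. For the tail $u\ge t^2$ one writes $T_u|(I-P_t)f|^2=T_{u-t^2}\big(T_{t^2}|(I-P_t)f|^2\big)$ and uses that $T_{u-t^2}$ is a complete contraction on $\mathcal M$ by \eqref{5151} and that $T_{t^2}|(I-P_t)f|^2\ge0$; hence $\|T_u|(I-P_t)f|^2\|_\infty\le\|T_{t^2}|(I-P_t)f|^2\|_\infty\le t^{2\alpha}B^2$, so the tail contributes $\le t^{2\alpha}B^2$. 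For the head $u<t^2$ one uses the complementary‑scales splitting $(I-P_t)f=(I-P_{\sqrt u})f+P_{\sqrt u}(I-P_{t-\sqrt u})f$, which by the triangle inequality and the Kadison--Schwarz inequality \eqref{1} yields
\[
T_u|(I-P_t)f|^2\le 2\,T_{(\sqrt u)^2}|(I-P_{\sqrt u})f|^2+2\,T_uP_{\sqrt u}|(I-P_{t-\sqrt u})f|^2 ;
\]
the first term integrates, after the substitution $u=t^2w$, to at most $2B^2\int_0^{t^2}u^\alpha\,d\mu_t(u)\lesssim_\alpha t^{2\alpha}B^2$, which is harmless.

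The remaining head term $\int_0^{t^2}T_uP_{\sqrt u}|(I-P_{t-\sqrt u})f|^2\,d\mu_t(u)$ is the heart of the matter, and I expect it to be the main obstacle. The strategy is: pull $T_uP_{\sqrt u}=T_{(\sqrt u)^2}P_{\sqrt u}\le C\,P_{2\sqrt u}$ by Step 0; then iterate the complementary‑scales identity in the form $(I-P_{t-\sqrt u})f=(I-P_t)f-P_{t-\sqrt u}(I-P_{\sqrt u})f$ (note $(t-\sqrt u)+\sqrt u=t$), which, after applying $P_{2\sqrt u}$ and using $P_{2\sqrt u}P_{t-\sqrt u}=P_{t+\sqrt u}$, produces the two pieces $P_{2\sqrt u}|(I-P_t)f|^2$ and $P_{t+\sqrt u}|(I-P_{\sqrt u})f|^2$. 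On the range where $2\sqrt u\asymp t$ — which carries almost all of the $\mu_t$‑mass of the head — the Poisson monotonicity \eqref{4} dominates these by a fixed multiple of $P_t|(I-P_t)f|^2$ and of $P_t|(I-P_{\sqrt u})f|^2$, the latter being reduced again to $B$ and to a copy of $P_t|(I-P_t)f|^2$ by one more application of the same identity and of Step 0; on the complementary range $u\ll t^2$, the Gaussian factor gives $\mu_t\big((0,t^2/M)\big)\lesssim e^{-M/4}$, so with $M\simeq\log(1/t)$ (the case $t\ge1$ being trivially $\lesssim\|f\|_\infty$) even the crude bound $\|T_uP_{\sqrt u}|(I-P_{t-\sqrt u})f|^2\|_\infty\le 4\|f\|_\infty^2$ yields a contribution $\lesssim t^{2\alpha}\|f\|_\infty^2$. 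The delicate point is to keep every numerical constant that multiplies a recurring copy of $\|P_t|(I-P_t)f|^2\|_\infty$ strictly below $1$, so that this self‑referential term can be absorbed; to do so rigorously without presupposing that the Campanato seminorm is finite, I would carry out the entire estimate for the truncated seminorm $\sup_{t\ge R}(\cdot)$, which is automatically finite (it is $\le 2R^{-\alpha}\|f\|_\infty$ and tends to $0$ as $R\to\infty$ by \eqref{5151} and Proposition~\ref{ws}), absorb, and then let $R\to0$. It is worth noting that for quasi‑increasing $(T_t)$ this whole paragraph collapses: there $T_u|(I-P_t)f|^2\le(u/t^2)^\beta T_{t^2}|(I-P_t)f|^2$ for $u<t^2$ directly, so the head integrates at once to $\lesssim t^{2\alpha}B^2$; it is in the quasi‑decreasing case that the careful bookkeeping above is required.
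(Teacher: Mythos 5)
Your Step~0 and the easy inequality $\sup_t t^{-\alpha}\|T_{t^2}|f-P_tf|^2\|_\infty^{1/2}\lesssim \|f\|_{\mathcal L_\alpha^c(\mathcal P)}$ are correct and coincide with the paper's argument, as does the splitting of the subordination integral at $u=t^2$ and the treatment of the tail $u\geq t^2$. The gap is in the head $u<t^2$, and it is a genuine one, not just a matter of bookkeeping. Your plan rests on absorbing a recurring copy of $\sup_t t^{-\alpha}\|P_t|(I-P_t)f|^2\|_\infty^{1/2}$ with total coefficient strictly below $1$, but the coefficient you would actually produce is (at least) the product of the factor $2$ from $|a+b|^2\le 2|a|^2+2|b|^2$, the constant $C$ from Step~0 (which depends on the quasi-monotonicity exponent $\beta$ and can be arbitrarily large), the factor from the Poisson monotonicity \eqref{4}, and the $\mu_t$-mass of the range where $2\sqrt u\gtrsim t$; nothing forces this product below $1$. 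Moreover, your dichotomy does not cover all of $(0,t^2)$: the crude bound only handles $u\lesssim t^2/\log(1/t)$, while the comparison $P_{2\sqrt u}|(I-P_t)f|^2\lesssim P_t|(I-P_t)f|^2$ via \eqref{4} requires $2\sqrt u\geq t$ (the inequality $P_sg\le \frac{s}{t}P_tg$ only goes from larger to smaller time), so the intermediate range $t^2/\log(1/t)\lesssim u\ll t^2$ is left unhandled, and on it $P_{2\sqrt u}|(I-P_t)f|^2$ cannot be dominated by $P_t|(I-P_t)f|^2$ at all.

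The paper avoids both problems by replacing your two-term splitting with a full telescoping adapted to the scale $\sqrt u$: with $n=\lfloor t/\sqrt u\rfloor$ one writes
\begin{align*}
\left\|T_u|f-P_tf|^2\right\|_\infty^{1/2}\le \sum_{j=0}^{n-1}\left\|T_u\big|P_{j\sqrt u}(f-P_{\sqrt u}f)\big|^2\right\|_\infty^{1/2}+\left\|T_u\big|P_{n\sqrt u}(f-P_{t-n\sqrt u}f)\big|^2\right\|_\infty^{1/2},
\end{align*}
and each summand is bounded, via Kadison--Schwarz and contractivity, by $\|T_{(\sqrt u)^2}|f-P_{\sqrt u}f|^2\|_\infty^{1/2}\le u^{\alpha/2}B$ (and the last by $(t-n\sqrt u)^{\alpha}B\le t^{\alpha}B$ after writing $T_u=T_{u-(t-n\sqrt u)^2}T_{(t-n\sqrt u)^2}$). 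Every piece is controlled directly by the $T$-quantity $B$ --- there is no self-referential term to absorb and no excluded range of $u$ --- at the price of a factor $n\approx t/\sqrt u$, whose square $t^2/u$ is then integrated away against the Gaussian kernel $te^{-t^2/4u}u^{-3/2}$. If you want to complete your argument you should adopt this telescoping for the head; the two-term iteration cannot be patched without it.
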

 \begin{proof}
If $(T_{t})_{t\geq0}$ is quasi-increasing, by Definition \ref{p0}, there exists a constant $\beta\geq0$ such that for $t^2<u<2t^2$,
\begin{align*}
  \frac{T_{t^{2}}}{t^{2\beta}}\leq \frac{T_{u}}{u^{\beta}}\leq  \frac{T_{2t^{2}}}{(2t)^{2\beta}}.
\end{align*}
We then have for a fixed $t>0$,
 \begin{align*}
  P_{t}=&\frac{1}{2 \sqrt{\pi}} \int_{0}^{\infty} t e^{-\frac{t^{2}}{4 u}} u^{-\frac{3}{2}} T_{u} d u
 \\ \geqslant& \frac{1}{2 \sqrt{\pi}} \int_{t^{2}}^{2 t^{2}} t e^{-\frac{t^{2}}{4 u}} u^{-\frac{3}{2}} T_{u} d u
\\  \geqslant& \frac{1}{2 \sqrt{\pi}} \int_{t^{2}}^{2 t^{2}} t e^{-\frac{t^{2}}{4 u}} u^{-\frac{3}{2}}(\frac{u}{t^{2}})^{\beta}T_{t^{2}} d u
 \\  \geqslant& T_{t^{2}}\left\{\frac{1}{2 \sqrt{\pi}} \int_{t^{2}}^{2 t^{2}} t e^{-\frac{t^{2}}{4 u}} u^{-\frac{3}{2}} d u\right\}
 \\(v=\frac{t^{2}}{4 u})=&T_{t^{2}}\left\{\frac{1}{\sqrt{\pi}} \int_{\frac{1}{8}}^{\frac{1}{4}}  e^{-v}v^{-\frac{1}{2}} d v\right\}.
 \end{align*}
 Similarly, if $(T_{t})_{t\geq0}$ is quasi-decreasing, i.e. there exists a constant $\beta\geq0$ such that
 $\frac{T_{t^{2}}}{t^{2\beta}}\leq \frac{T_{u}}{u^{\beta}}\leq  \frac{T_{\frac{t^{2}}{2}}}{(\frac{t^{2}}{2})^{2\beta}},$ one then can easily verify that $P_{t}\gtrsim T_{t^{2}},$ too.
Thus we can derive from the above two inequalities that $T_{t^{2}}(\varphi) \lesssim P_{t}(\varphi)$  for any positive $\varphi\in\mathcal M$. This tells that, for $0<\alpha<1$,
\begin{align*}
\sup_{t>0}\frac{1}{t^{\alpha}}\left\|T_{t^{2}}|f-P_{t}f|^{2}\right\|_{\infty}^{\frac{1}{2}}\lesssim\sup_{t>0}\frac{1}{t^{\alpha}}\left\|P_{t}|f-P_{t}f|^{2}\right\|_{\infty}^{\frac{1}{2}}.
\end{align*}
On the other hand, for $t>0$ fixed, we have
\begin{align*}
\frac{1}{t^{\alpha}}\left\|P_{t}|f-P_{t}f|^{2}\right\|_{\infty}^{\frac{1}{2}}=&\frac{1}{t^{\alpha}}\left\|\frac{1}{2\sqrt{\pi}}\int^{\infty}_{0}te^{-\frac{t^{2}}{4u}}u^{-\frac{3}{2}}T_{u} |f-P_{t}f|^{2}du\right\|_{\infty}^{\frac{1}{2}}
\\ \leq&\frac{1}{t^{\alpha}}\left(\frac{1}{2\sqrt{\pi}}\int^{t^{2}}_{0}te^{-\frac{t^{2}}{4u}}u^{-\frac{3}{2}}\left\|T_{u} |f-P_{t}f|^{2}\right\|_{\infty}du\right)^{\frac{1}{2}}
\\ &+\frac{1}{t^{\alpha}}\left(\frac{1}{2\sqrt{\pi}}\int^{\infty}_{t^{2}}te^{-\frac{t^{2}}{4u}}u^{-\frac{3}{2}}\left\|T_{u} |f-P_{t}f|^{2}\right\|_{\infty}du\right)^{\frac{1}{2}}.
\end{align*}
For $u\geq t^{2}$,
 \begin{align*}
 \left\|T_{u} |f-P_{t}f|^{2}\right\|_{\infty}=\left\|T_{u-t^{2}} T_{t^{2}}|f-P_{t}f|^{2}\right\|_{\infty}\leq\left\| T_{t^{2}}|f-P_{t}f|^{2}\right\|_{\infty}.
 \end{align*}
For $u<t^{2}$, let $n$ be the biggest integer not bigger than $\frac{t}{\sqrt{u}}$. We have
\begin{align*}
&\left\|T_{u}\left|f-P_{t}f\right|^{2}\right\|_{\infty}^{\frac{1}{2}} \\
\leq &\left\|T_{u}\left|f-P_{\sqrt{u}} f\right|^{2}\right\|_{\infty}^{\frac{1}{2}}+\cdots
+\left\|T_{u}\left|P_{(n-1) \sqrt{u}} f-P_{n \sqrt{u}}f\right|^{2}\right\|_{\infty}^{\frac{1}{2}}+\left\|T_{u}\left|P_{n \sqrt{u}} f-P_{t} f\right|^{2}\right\|_{\infty}^{\frac{1}{2}} \\
\leq&\left\|T_{u}\left|f-P_{\sqrt{u}} f\right|^{2}\right\|_{\infty}^{\frac{1}{2}}+\cdots+\left\|P_{(n-1) \sqrt{u}} T_{u}\left|f-P_{\sqrt{u}}f\right|^{2}\right\|_{\infty}^{\frac{1}{2}}
\\&+\left\|P_{n \sqrt{u}} T_{u-(t-n \sqrt{u})^{2}} T_{(t-n \sqrt{u})^{2}}\left|f-P_{t-n \sqrt{u}}f\right|^{2}\right\|_{\infty}^{\frac{1}{2}}\\
\leq&\frac{t}{\sqrt{u}} \frac{t^{\alpha}}{\sqrt{u}^{\alpha}}\left\|T_{u}\left|f-P_{\sqrt{u}} f\right|^{2}\right\|_{\infty}^{\frac{1}{2}}+\frac{ t^{\alpha}}{(t-n \sqrt{u})^{\alpha}}\left\|T_{(t-n \sqrt{u})^{2}}\left|f-P_{t-n \sqrt{u}}f\right|^{2}\right\|_{\infty}^{\frac{1}{2}}\\
\leq&  t^{\alpha}(1+\frac{t}{\sqrt{u}})\sup_{t}\frac{1}{t^{\alpha}}\left\|T_{t^{2}}\left|f-P_{t} f\right|^{2}\right\|_{\infty}^{\frac{1}{2}}
\\ \leq& 2\frac{t^{\alpha+1}}{\sqrt{u}}\sup_{t}\frac{1}{t^{\alpha}}\left\|T_{t^{2}}\left|f-P_{t} f\right|^{2}\right\|_{\infty}^{\frac{1}{2}}.
\end{align*}
Therefore,
\begin{align*}
 \frac{1}{t^{\alpha}}\left\|P_{t}|f-P_{t}f|^{2}\right\|_{\infty}^{\frac{1}{2}} \leq&\left(\frac{1}{2\sqrt{\pi}}\int^{t^{2}}_{0}te^{-\frac{t^{2}}{4u}}u^{-\frac{3}{2}}\frac{t^{2}}{u}du\right)^{\frac{1}{2}}\sup_{t}\frac{1}{t^{\alpha}}\left\|T_{t^{2}}\left|f-P_{t} f\right|^{2}\right\|_{\infty}^{\frac{1}{2}}
\\ &+\left(\frac{1}{2\sqrt{\pi}}\int^{\infty}_{t^{2}}te^{-\frac{t^{2}}{4u}}u^{-\frac{3}{2}}du\right)^{\frac{1}{2}}\sup_{t}\frac{1}{t^{\alpha}}\left\|T_{t^{2}}\left|f-P_{t} f\right|^{2}\right\|_{\infty}^{\frac{1}{2}}
\\ \lesssim & \sup_{t}\frac{1}{t^{\alpha}}\left\|T_{t^{2}}\left|f-P_{t} f\right|^{2}\right\|_{\infty}^{\frac{1}{2}}.
\end{align*}
Taking the supremum over $t>0$ on the left-hand side yields the assertion.
\end{proof}
We now come to prove our main results in this section.
\begin{proof}[Proof of Theorem \ref{q2}]
To show \rm{(i)}:
for any $0<\alpha<1$,
$$\|f\|_{\mathcal L_{\alpha}^{c}(\mathcal P)}\lesssim \|f\|_{\mathcal L_{\frac{\alpha}{2}}^{c}(\mathcal T)}.$$
By Lemma $\ref{q1}$, it suffices to show
\begin{align*}
  \|f\|_{\infty}+\sup_{t>0}\frac{1}{t^{\alpha}}\left\|T_{t^{2}}|f-P_{t}f|^{2}\right\|_{\infty}^{\frac{1}{2}}\lesssim \|f\|_{\mathcal L_{\frac{\alpha}{2}}^{c}(\mathcal T)}.
 \end{align*}
Fix $t>0$, we apply $(\ref{2})$ and obtain
\begin{align*}
 \frac{1}{t^{\alpha}}\left\|T_{t^{2}}|f-P_{t}f|^{2}\right\|_{\infty}^{\frac{1}{2}}=&\frac{1}{t^{\alpha}}\left\|T_{t^{2}}\left|\frac{1}{2\sqrt{\pi}}\int^{\infty}_{0}te^{-\frac{t^{2}}{4u}}u^{-\frac{3}{2}}(f-T_{u} f) du\right|^{2}\right\|_{\infty}^{\frac{1}{2}}
 \\ \nonumber\leq&
 \frac{1}{t^{\alpha}}\left\|\frac{1}{2\sqrt{\pi}}\int^{\infty}_{0}te^{-\frac{t^{2}}{4u}}u^{-\frac{3}{2}}T_{t^{2}}\left|f-T_{u} f \right|^{2}du\right\|_{\infty}^{\frac{1}{2}}
\\ \nonumber\leq&
 \left(\frac{1}{2\sqrt{\pi}}\int^{\infty}_{0}te^{-\frac{t^{2}}{4u}}u^{-\frac{3}{2}}\frac{1}{t^{2\alpha}}\left\|T_{t^{2}}\left|f-T_{u} f \right|^{2}\right\|_{\infty}du\right)^{\frac{1}{2}}.
\end{align*}
For $u\leq t^{2}$,
\begin{align*}
 \left\|T_{t^{2}}\left|f-T_{u} f \right|^{2}\right\|_{\infty}
 \leq&\left\|T_{u}\left|f-T_{u}f  \right|^{2}\right\|_{\infty}
\\ \leq&\frac{t^{2\alpha}}{u^{\alpha}}\left\|T_{u}\left|f-T_{u} f \right|^{2}\right\|_{\infty}
\\ \leq& t^{2\alpha}\left\{\sup_{t>0}\frac{1}{t^{\frac{\alpha}{2}}}\left\|T_{t}\left|f-T_{t} f \right|^{2}\right\|_{\infty}^{\frac{1}{2}}\right\}^{2}.
\end{align*}
For $u> t^{2}$, denoting by $m$ the biggest integer not bigger than $\log_{2}\frac{u}{t^{2}}$, by the Kadison-Schwarz inequality \eqref{1}, we have
\begin{align*}
&\frac{1}{t^{2\alpha}}\left\|T_{t^{2}}\left|f-T_{u} f \right|^{2}\right\|_{\infty}
\\ \leq &\frac{1}{t^{2\alpha}}\left\|T_{t^{2}}\left|f-T_{t^{2}} f \right|^{2}\right\|_{\infty}+\frac{1}{t^{2\alpha}}\left\|T_{t^{2}}\left|T_{t^{2}} f-T_{2t^{2}} f \right|^{2}\right\|_{\infty}
+\frac{1}{t^{2\alpha}}\left\|T_{t^{2}}\left|T_{2t^{2}} f-T_{4t^{2}} f \right|^{2}\right\|_{\infty}
\\&+
\cdots +\frac{1}{t^{2\alpha}}\left\|T_{t^{2}}\left|T_{2^{m}t^{2}}f-T_{u} f \right|^{2}\right\|_{\infty}
\\ \leq&
\frac{1}{t^{2\alpha}}\left\|T_{t^{2}}\left|f-T_{t^{2}} f \right|^{2}\right\|_{\infty}+\frac{1}{t^{2\alpha}}\left\|T_{t^{2}}\left| f-T_{t^{2}} f \right|^{2}\right\|_{\infty}+\frac{2^{\alpha}}{(2t^{2})^{\alpha}}\left\|T_{2t^{2}}\left| f-T_{2t^{2}} f \right|^{2}\right\|_{\infty}
\\ &+\cdots + \frac{(u-2^{m}t^{2})^{\alpha}}{t^{2\alpha}}\frac{1}{(u-2^{m}t^{2})^{\alpha}}\left\|T_{u-2^{m}t^{2}}\left|f-T_{u-2^{m}t^{2}} f \right|^{2}\right\|_{\infty}
\\ \lesssim&\frac{1}{t^{2\alpha}}(\log_{2}\frac{u}{t^{2}}+2)\left\{\sup_{t>0}\frac{1}{t^{\frac{\alpha}{2}}}\left\|T_{t}\left|f-T_{t} f \right|^{2}\right\|_{\infty}^{\frac{1}{2}}\right\}^{2}.
\end{align*}
Hence, we get
 \begin{align*}
   \frac{1}{t^{\alpha}}\left\|T_{t^{2}}|f-P_{t}f|^{2}\right\|_{\infty}^{\frac{1}{2}}
   \leq&
 \left(\frac{1}{2\sqrt{\pi}}\int^{t^{2}}_{0}te^{-\frac{t^{2}}{4u}}u^{-\frac{3}{2}}du\right)^{\frac{1}{2}}\left\{\sup_{t>0}\frac{1}{t^{\frac{\alpha}{2}}}\left\|T_{t}\left|f-T_{t} f \right|^{2}\right\|_{\infty}^{\frac{1}{2}}\right\}
 \\ &+\left(\frac{1}{2\sqrt{\pi}}\int^{\infty}_{t^{2}}(\log_{2}\frac{u}{t^{2}}+2)te^{-\frac{t^{2}}{4u}}u^{-\frac{3}{2}}du\right)^{\frac{1}{2}}\left\{\sup_{t>0}\frac{1}{t^{\frac{\alpha}{2}}}\left\|T_{t}\left|f-T_{t} f \right|^{2}\right\|_{\infty}^{\frac{1}{2}}\right\}
\\  \lesssim &
\sup_{t>0}\frac{1}{t^{\frac{\alpha}{2}}}\left\|T_{t}\left|f-T_{t} f \right|^{2}\right\|_{\infty}^{\frac{1}{2}},
\end{align*}
which implies \rm{(i)}.

 Using the same tricks in Theorem \ref{e25}, one gets
 \begin{align*}
   \|f\|_{\mathcal L_{\alpha}^{c}(\mathcal T)}\lesssim_{\alpha}\|f\|_{\Lambda_{\alpha}(\mathcal T)}
 \end{align*}
for any $\alpha>0$. Then using the results of Theorem \ref{e25} and \rm{(i)}, \rm{(ii)} can be shown easily by the fact that $\|f\|_{\Lambda_{\alpha}(\mathcal P)}\simeq_{\alpha}\|f\|_{\mathcal L_{\alpha}^{c}(\mathcal P)}\lesssim \|f\|_{\mathcal L_{\frac{\alpha}{2}}^{c}(\mathcal T)}\lesssim_{\alpha}\|f\|_{\Lambda_{\frac{\alpha}{2}}(\mathcal T)} $ for any $0<\alpha<1$.

We now deal with \rm{(iii)}: If $(T_{t})_{t\geq0}$ has the $\Gamma^{2}\geq0$ property and $0<\alpha<1$, $$\|f\|_{\ell_{\frac{\alpha}{2}}^{c}(\mathcal T)}\lesssim \|f\|_{\ell_{\alpha}^{c}(\mathcal P)}.$$
By Remark \ref{lk}, Lemma \ref{q1} and Lemma \ref{e.1}(i), for $t>0$ fixed, we deduce
\begin{align}\label{kas}
& \frac{1}{t^{\frac{\alpha}{2}}}\left\|T_{t}|f|^{2}-|T_{t}f|^{2}\right\|^{\frac{1}{2}}_{\infty}
\\ \nonumber \leq&\frac{1}{t^{\frac{\alpha}{2}}}\left\|T_{t}|f-P_{\sqrt{t}}f|^{2}- |T_{t}(f-P_{\sqrt{t}}f)|^{2}\right\|^{\frac{1}{2}}_{\infty}+\frac{1}{t^{\frac{\alpha}{2}}}\left\|T_{t}|P_{\sqrt{t}}f|^{2}- |T_{t}(P_{\sqrt{t}}f)|^{2}\right\|^{\frac{1}{2}}_{\infty}
\\ \nonumber\leq&\frac{1}{t^{\frac{\alpha}{2}}}\left\|T_{t}|f-P_{\sqrt{t}}f|^{2}\right\|^{\frac{1}{2}}_{\infty}+\frac{1}{t^{\frac{\alpha}{2}}}\left\|2\int_{0}^{t}T_{t-s}\Gamma( T_{s}P_{\sqrt{t}}f)ds\right\|^{\frac{1}{2}}_{\infty}
\\ \nonumber\lesssim &\sup_{t>0}\frac{1}{t^{\alpha}}\left\|P_{t}|f-P_{t}f|^{2}\right\|^{\frac{1}{2}}_{\infty}+\frac{1}{t^{\frac{\alpha}{2}}}\left\|2\int_{0}^{t}T_{t-s}\Gamma( T_{s}P_{\sqrt{t}}f)ds\right\|^{\frac{1}{2}}_{\infty}.
\end{align}
On the other hand, 
by \eqref{2} and the Fubini theorem, one gets
\begin{align*}
&\frac{1}{t^{\frac{\alpha}{2}}}\left\|2\int_{0}^{t}T_{t-s}\Gamma( T_{s}P_{\sqrt{t}}f)ds\right\|^{\frac{1}{2}}_{\infty}  \\ \nonumber=&\frac{1}{t^{\frac{\alpha}{2}}}\left\|2\int_{0}^{t}T_{t-s}\Gamma(\frac{1}{2\sqrt{\pi}}\int_{0}^{\infty}\sqrt{t}e^{-\frac{t}{4u}}u^{-\frac{3}{2}}T_{s+u}fdu)ds\right\|^{\frac{1}{2}}_{\infty}
\\ \nonumber \eqref{pm} \leq&\frac{1}{t^{\frac{\alpha}{2}}}\left\|\frac{1}{\sqrt{\pi}}\int_{0}^{\infty}\sqrt{t}e^{-\frac{t}{4u}}u^{-\frac{3}{2}}\int_{0}^{t}T_{t-s}\Gamma(T_{s+u}f)dsdu\right\|^{\frac{1}{2}}_{\infty}
\\ \nonumber(\Gamma^{2}\geq0) \leq&
\frac{1}{t^{\frac{\alpha}{2}}}\left\|\frac{1}{\sqrt{\pi}}\int_{0}^{\infty}\sqrt{t}e^{-\frac{t}{4u}}u^{-\frac{3}{2}}\int_{0}^{t}T_{t+u-\frac{t+u}{t}s}\Gamma(T_{\frac{t+u}{t}s}f)dsdu\right\|^{\frac{1}{2}}_{\infty} \\ \nonumber(v=\frac{t+u}{t}s)
\leq&\left(\frac{1}{\sqrt{\pi}}\int_{0}^{\infty}\sqrt{t}e^{-\frac{t}{4u}}u^{-\frac{3}{2}}\frac{t}{t+u}\frac{1}{t^{\alpha}}\left\|\int_{0}^{t+u}T_{t+u-v}\Gamma(T_{v}f)dv\right\|_{\infty}du\right)^{\frac{1}{2}}
\\ \nonumber(\text{Lemma}\,\, \ref{e.1}\,\, (\rm{i}))
=&\left(\frac{1}{2\sqrt{\pi}}\int_{0}^{\infty}\sqrt{t}e^{-\frac{t}{4u}}u^{-\frac{3}{2}}\frac{t^{1-\alpha}}{(t+u)^{1-\alpha}}\frac{1}{(t+u)^{\alpha}}\left\|T_{t+u}|f|^{2}-\left|T_{t+u} f\right|^{2}\right\|_{\infty}\right)^{\frac{1}{2}}
\\ \nonumber\leq&\left(\frac{1}{2\sqrt{\pi}}\int_{0}^{\infty}\sqrt{t}e^{-\frac{t}{4u}}u^{-\frac{3}{2}}\frac{t^{1-\alpha}}{(t+u)^{1-\alpha}}du\right)^{\frac{1}{2}}\sup_{t>0}\frac{1}{t^{\frac{\alpha}{2}}}\left\|T_{t}|f|^{2}-\left|T_{t} f\right|^{2}\right\|^{\frac{1}{2}}_{\infty}
\\ \nonumber (v=\frac{t}{4u})=&\left(\frac{1}{\sqrt{\pi}}\int_{0}^{\infty}e^{-v}v^{-\frac{1}{2}}(\frac{4v}{1+4v})^{1-\alpha}dv\right)^{\frac{1}{2}}\sup_{t>0}\frac{1}{t^{\frac{\alpha}{2}}}\left\|T_{t}|f|^{2}-\left|T_{t} f\right|^{2}\right\|^{\frac{1}{2}}_{\infty}.
\end{align*}
Let $h(\alpha)$ denote the function
\begin{align*}
h(\alpha)=\frac{1}{\sqrt{\pi}}\int_{0}^{\infty}e^{-v}v^{-\frac{1}{2}}(\frac{4v}{1+4v})^{1-\alpha}dv,\,\,0<\alpha<1.
\end{align*}
 It is easy to see that $h$ is increasing with respect to $0<\alpha<1$.
Then we get $h(\alpha)<h(1)=\frac{1}{\sqrt{\pi}}\int_{0}^{\infty}e^{-v}v^{-\frac{1}{2}}dv=1$.
Hence, \eqref{kas} can be estimated as
\begin{align*}
  \frac{1}{t^{\frac{\alpha}{2}}}\left\|T_{t}|f|^{2}-\left|T_{t} f\right|^{2}\right\|^{\frac{1}{2}}_{\infty} \leq\sup_{t>0}\frac{1}{t^{\alpha}}\left\|P_{t}|f-P_{t}f|^{2}\right\|^{\frac{1}{2}}_{\infty}+(h(\alpha))^{\frac{1}{2}}\sup_{t>0}\frac{1}{t^{\frac{\alpha}{2}}}\left\|T_{t}|f|^{2}-\left|T_{t} f\right|^{2}\right\|^{\frac{1}{2}}_{\infty}.
\end{align*}
Taking the supremum over $t>0$ on the left-hand side of the above inequality, we obtain
\begin{align*}
  \sup_{t>0}\frac{1}{t^{\frac{\alpha}{2}}}\left\|T_{t}|f|^{2}-\left|T_{t} f\right|^{2}\right\|^{\frac{1}{2}}_{\infty}
  \leq&\sup_{t>0}\frac{1}{t^{\alpha}}\left\|P_{t}|f-P_{t}f|^{2}\right\|^{\frac{1}{2}}_{\infty}+(h(\alpha))^{\frac{1}{2}}\sup_{t>0}\frac{1}{t^{\frac{\alpha}{2}}}\left\|T_{t}|f|^{2}-\left|T_{t} f\right|^{2}\right\|^{\frac{1}{2}}_{\infty}.
\end{align*}
Since $h(\alpha)<1$, we then have
\begin{align*}
\sup_{t>0}\frac{1}{t^{\frac{\alpha}{2}}}\left\|T_{t}|f|^{2}-\left|T_{t} f\right|^{2}\right\|^{\frac{1}{2}}_{\infty}  \leq\frac{1}{1-(h(\alpha))^{\frac{1}{2}}}\sup_{t>0}\frac{1}{t^{\alpha}}\left\|P_{t}|f-P_{t}f|^{2}\right\|^{\frac{1}{2}}_{\infty},
\end{align*}
which implies
\begin{align*}
 \|f\|_{\ell_{\frac{\alpha}{2}}^{c}(\mathcal T)}\lesssim\|f\|_{\ell_{\alpha}^{c}(\mathcal P)}.
\end{align*}
\end{proof}

\subsection*{Acknowledgments} 
The authors would like to thank the referee for his comments which help improve the presentation of the present paper.
This work is partially supported by the National
Natural Science Foundation of China (No. 12071355,
No. 12325105, No. 12031004, No. W2441002).

\bigskip

\noindent

\medskip
\noindent
Guixiang Hong\\
%
Institute for Advanced Study in Mathematics,
Harbin Institute of Technology,
Harbin 150001, China

\noindent

\noindent{E-mail address}:
\texttt{gxhong@hit.edu.cn} \\

\medskip
\noindent
Yuanyuan Jing\\
School of Mathematics and Statistics,
Wuhan University,
Wuhan 430072, China
\noindent

\noindent

\noindent{E-mail address}:
\texttt{yuanyuanjing@whu.edu.cn}\\
 \medskip

\noindent

\noindent

\end{document}